\newcommand{\vertiii}[1]{{\left\vert\kern-0.25ex\left\vert\kern-0.25ex\left\vert #1 
    \right\vert\kern-0.25ex\right\vert\kern-0.25ex\right\vert}}
\def\nnfootnote{\xdef\@thefnmark{}\@footnotetext}
\pgfplotsset{compat=newest}
\newcommand{\R}{\mathbb{R}}
\newtheoremstyle{nonitalic}
{3pt}
{3pt}
{}
{}
{\bfseries}
{.}
{.5em}
{}
\theoremstyle{plain}
\newtheorem{theorem}{Theorem}[section] 
\theoremstyle{nonitalic}
\newtheorem{definition}[theorem]{Definition}
\newtheorem{example}[theorem]{Example}
\newtheorem{remark}[theorem]{Remark}
\theoremstyle{plain}
\newtheorem{corollary}[theorem]{Corollary}
\newtheorem{lemma}[theorem]{Lemma}
\newtheorem{proposition}[theorem]{Proposition}
\newtheorem{criterion}[theorem]{Criterion}
\theoremstyle{plain} 
\newtheorem*{theorem*}{Theorem}
\newtheorem*{conjecture*}{Conjecture}
\pgfplotsset{compat=newest}
\title{Mathematical Foundations of Interlocking Assemblies}
\author{Tom Goertzen\footnote{RWTH Aachen University, Email: tom.goertzen@rwth-aachen.de}}
\date{}
\begin{document}

\maketitle

\begin{abstract}
    The study of interlocking assemblies is an emerging field with applications in various disciplines. However, to this day, the mathematical treatment of these assemblies has been sparse. In this work, we develop a comprehensive mathematical theory for interlocking assemblies, providing a precise definition and a method for proving the interlocking property based on infinitesimal motions. We consider assemblies with crystallographic symmetries and verify interlocking properties for such assemblies. Our analysis includes the development of an infinite polytope with crystallographic symmetries to ensure that the interlocking property holds. For a certain block, called the RhomBlock, that can be assembled in numerous ways, characterised by the combinatorial theory of lozenges, we rigorously prove the interlocking property. By conclusively showing that any assembly of the RhomBlock is interlocking, we provide a robust framework for further exploration and application of interlocking assemblies.
\end{abstract}

\section{Introduction}

An interlocking assembly consists of rigid bodies, called \emph{blocks}, which interlock purely based on their geometric properties and an additional constraint that fixes a certain set of blocks from moving, the \emph{frame}. This means that no finite subset of blocks not contained in the frame can be moved without causing penetrations with other blocks.

Interlocking assemblies, also known as (topological) interlocking assemblies in engineering applications, show promising features when applied in various areas, see \cite{EstrinDesignReview}. For instance, besides applications in architecture, see \cite{fallacara_topological_2019,lecci_design_2021}, the concept of interlocking assemblies can also be applied to extraterrestrial constructions \cite{dyskin_principle_2005} or to interlocking puzzles \cite{InterlockingPuzzles}.

However, the mathematical theory behind these assemblies remains rather unexplored, and verifying the interlocking property leads to a high-dimensional problem as we have to consider motions for each block simultaneously.

There are several construction methods yielding candidates for interlocking assemblies, see \cite{EstDysArcPasBelKanPogodaevConvex,akleman_generalized_2020,GoertzenFIB,goertzen2024constructing}. A common feature many of these constructions have is the appearance of wallpaper symmetries, which correspond to planar crystallographic groups. In \cite{GoertzenFIB}, a definition for (topological) interlocking assemblies is given in terms of continuous motions. In \cite{wang_design_2019,wang_computational_2021}, an approach for verifying the interlocking property based on infinitesimal motions is given, leading to solving a linear optimisation problem.

In this work, we connect both of these terminologies and show that we can indeed translate the definition in \cite{GoertzenFIB} into the infinitesimal criterion given in \cite{wang_computational_2021}, by considering continuous motions which are differentiable in their starting configuration.

This tool can be used to verify interlocking properties of assemblies constructed with the methods in \cite{goertzen2024constructing}. There, the idea is to deform two fundamental domains $F,F'$ of a planar crystallographic group $G$ acting on the Euclidean plane $\mathbb{R}^2$ continuously into each other to obtain an interlocking block $X$, which can be assembled by extending the action of $G$ onto $\mathbb{R}^3$. To be more specific, we start with a fundamental domain $F$ in the shape of a polyhedron and deform the edges of $F$ in the spirit of M.C. Escher in a path-based framework, to obtain a new domain $F'$. If the deformation comes from piecewise-linear paths, we can place $F$ and $F'$ into two parallel planes in $\mathbb{R}^3$ and interpolate between them using a triangulation. In Figure \ref{fig:p6Example}, we see an example of this construction. 

\begin{figure}[H]
\centering
\begin{minipage}{.2\textwidth}
  \centering
  \includegraphics[height=5cm]{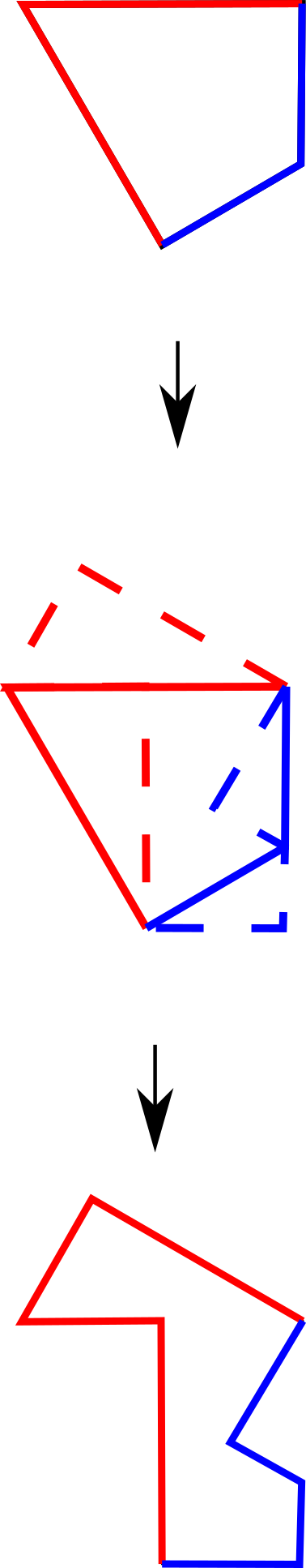}
  \subcaption{}
    \label{fig:p6EscherTrick}
\end{minipage}%
\begin{minipage}{.25\textwidth}
  \centering
  \includegraphics[height=5cm]{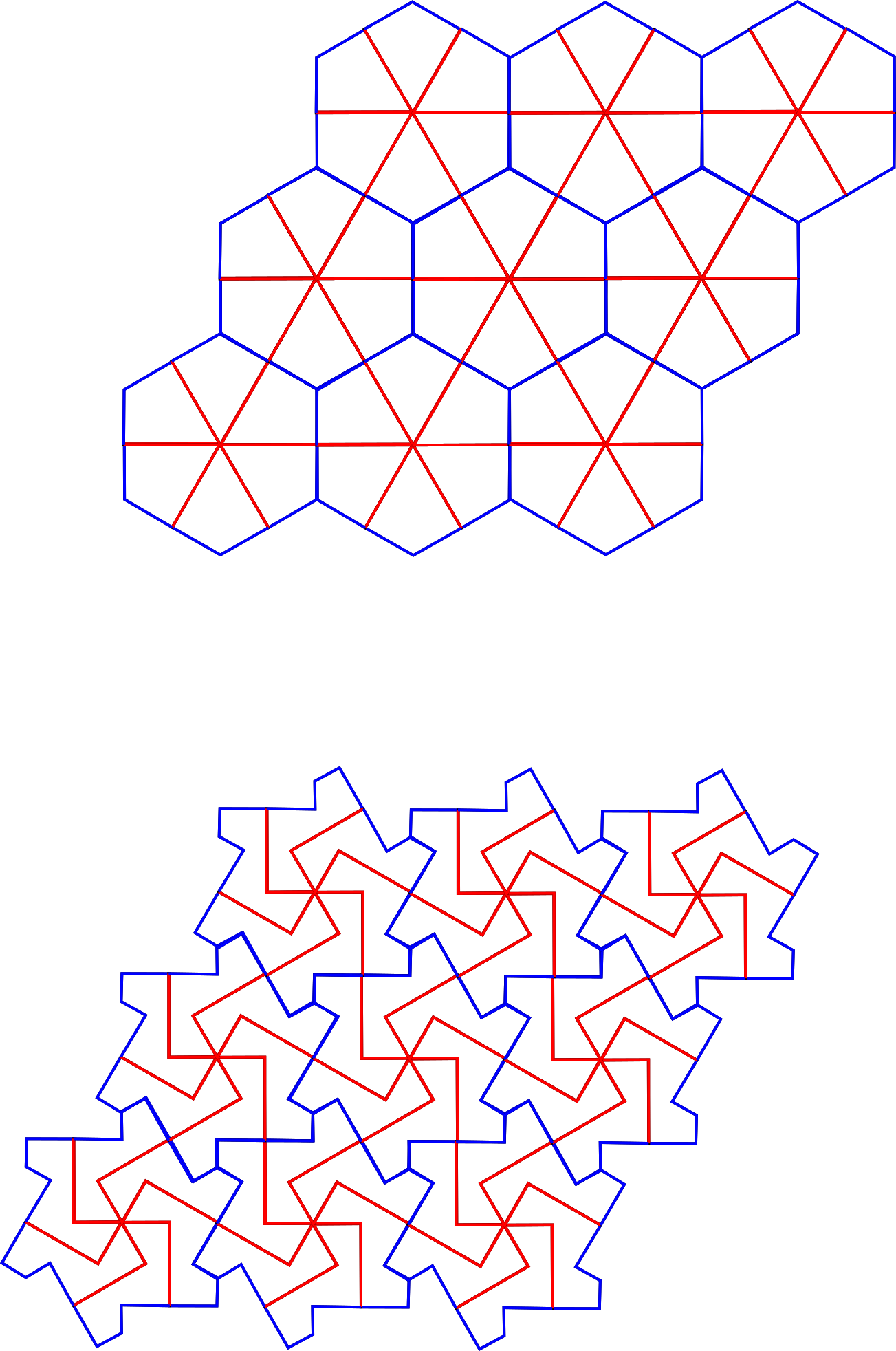}
  \subcaption{}
    \label{fig:P6Domains}
\end{minipage}
\begin{minipage}{.2\textwidth}
  \centering
  \includegraphics[height=5cm]{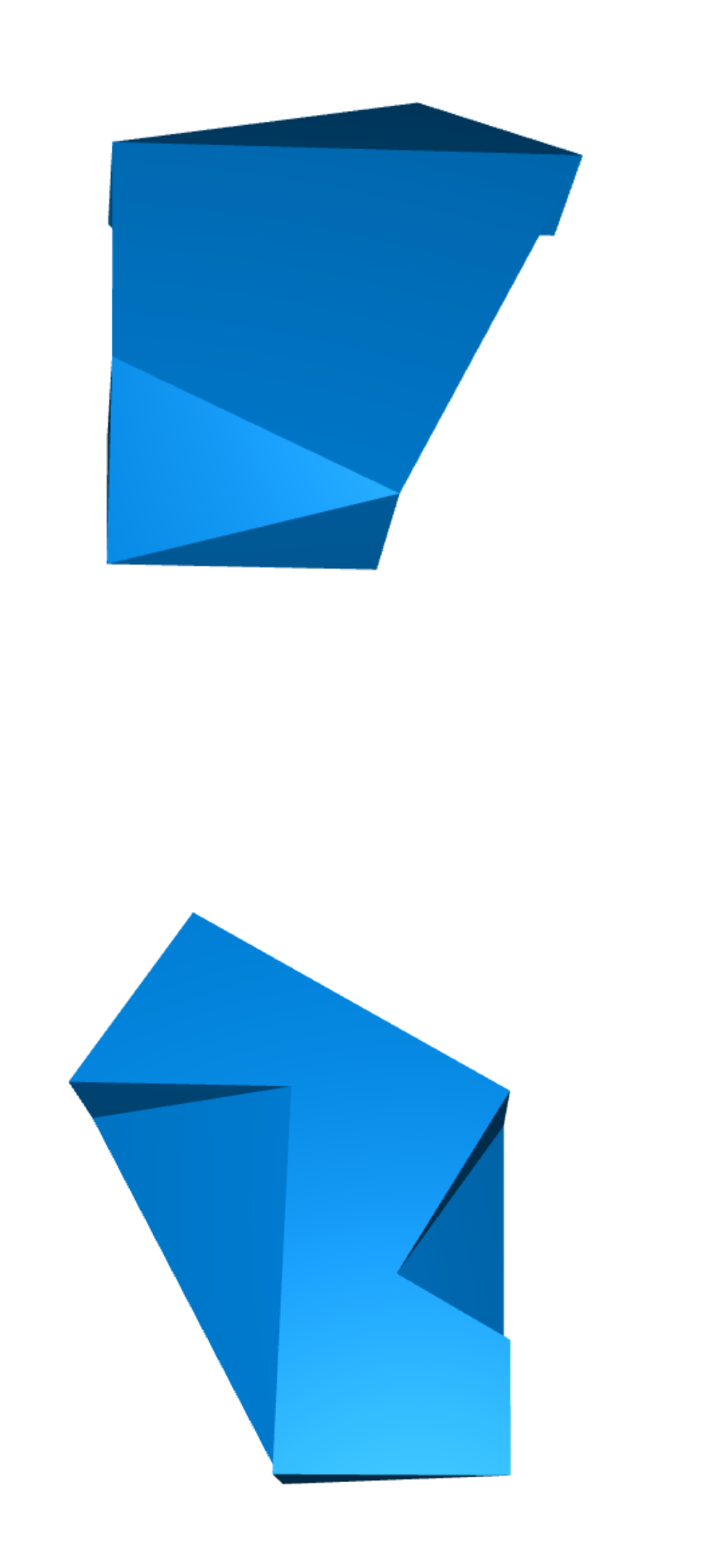}
  \subcaption{}
    \label{fig:P6Block}
\end{minipage}
\begin{minipage}{.3\textwidth}
  \centering
  \includegraphics[height=5cm]{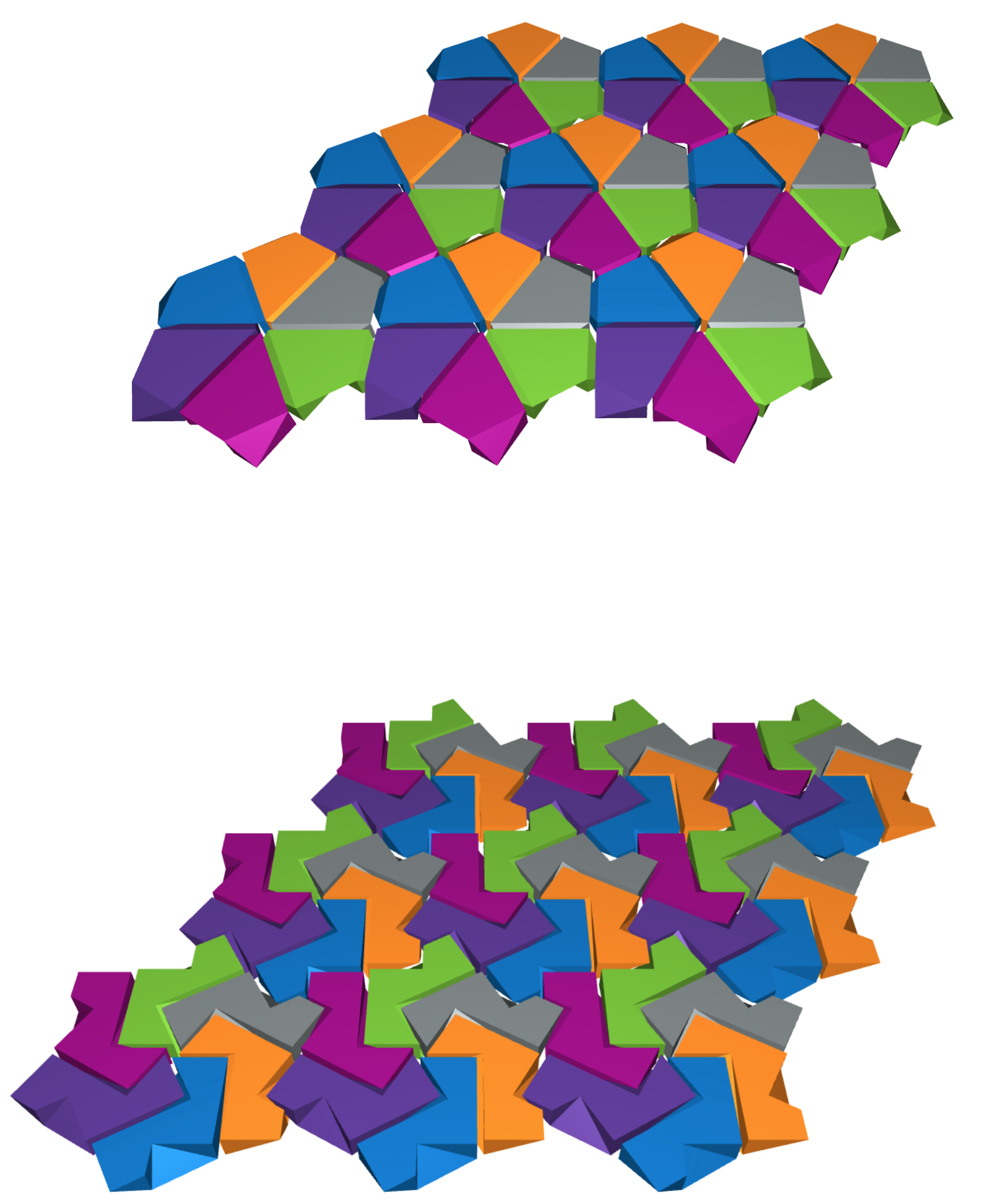}
  \subcaption{}
    \label{fig:P6Assembly}
\end{minipage}
\caption{Demonstrating the steps for generating interlocking blocks by exploiting a planar crystallographic group of type p6 (using the international notation \cite{ITA2002}), see \cite{goertzen2024constructing}: (a) First, we apply the \emph{Escher Trick}, i.e.\ obtain a new fundamental domain from a given one. (b) Both domains yield tessellations of the plane. (c) Next, a block is obtained by interpolating between the two domains. (d) We obtain an assembly with blocks coloured according to their arrangement.}
\label{fig:p6Example}
\end{figure}

The assemblies constructed with the method given in \cite{goertzen2024constructing} consist of infinitely many blocks, with each block corresponding to a group element of the underlying planar crystallographic group. 

In Section \ref{sec:proof}, we show that deforming all edges of a fundamental domain in the construction mentioned above, leads to an assembly of blocks, where we cannot move blocks using so-called ``sliding-motions'' and verify a translational interlocking property in the special case that the underlying group $G$ is generated by two translations and isomorphic to $\mathbb{Z}^2$.

In \cite{goertzen2024constructing}, a family of tiles is constructed called \emph{VersaTiles} that can be used to construct blocks that can be assembled in many ways, characterised by generalised Truchet tiles. In one boundary case, these generalised Truchet tiles are exactly given by \emph{lozenges}, i.e.\ a rhombus obtained by two equilateral triangles. Furthermore, a block is introduced, called \emph{RhomBlock}, whose assemblies correspond directly to tessellations with lozenges, see Figure \ref{fig:RhomBlock}.

\begin{figure}[H]
\centering
\begin{minipage}{.4\textwidth}
\begin{figure}[H]
    \centering
   \resizebox{!}{2.5cm}{\input{Interlocking_Figures/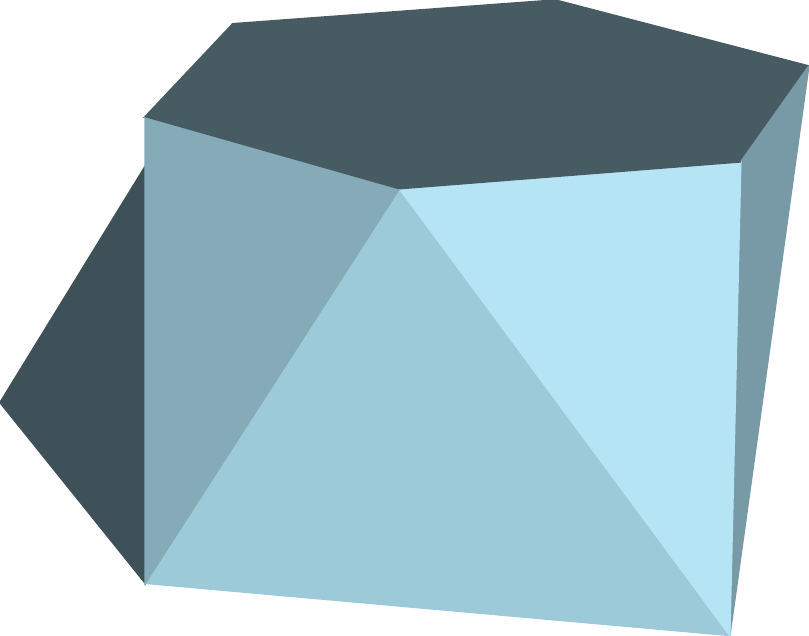_tex}}
\end{figure}    
\end{minipage}
\begin{minipage}{.4\textwidth}
\begin{figure}[H]
    \centering

\resizebox{!}{2.5cm}{\input{Interlocking_Figures/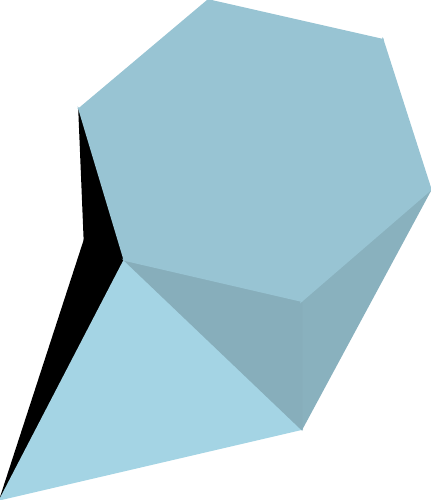_tex}}
\end{figure}    
\end{minipage}
\caption{Two views of the RhomBlock constructed in \cite{goertzen2024constructing}.}
\label{fig:RhomBlock}
\end{figure}

In Section \ref{sec:proof}, we show that any infinite assembly of RhomBlocks coming from a tessellation with lozenges indeed gives rise to an interlocking assembly with infinite blocks. We can take finite subsets of these assemblies to construct finite interlocking assemblies. In Figure \ref{fig:RhomBlockAssembly}, we show an example of an interlocking assembly with $10 \times 10$ (in the hexagonal grid) RhomBlocks.

\begin{figure}[H]
\centering
\begin{subfigure}{.33\textwidth}
  \centering
  \includegraphics[height=2.5cm]{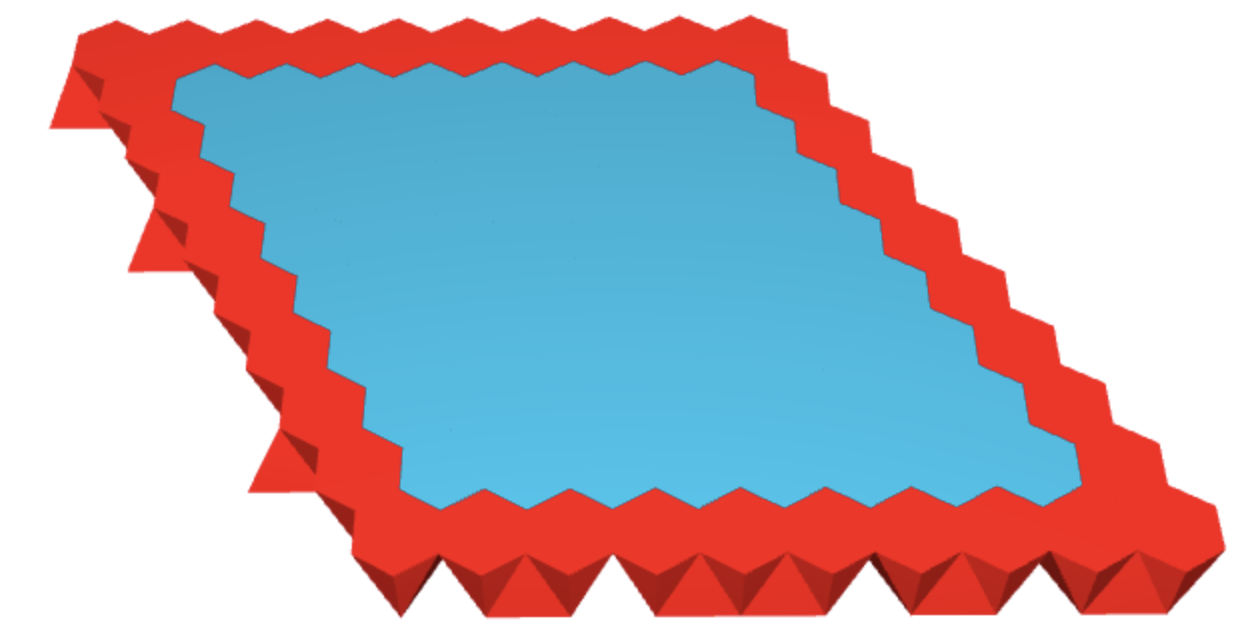}
  \subcaption{}
\end{subfigure}%
\begin{subfigure}{.33\textwidth}
  \centering
  \includegraphics[height=2.5cm]{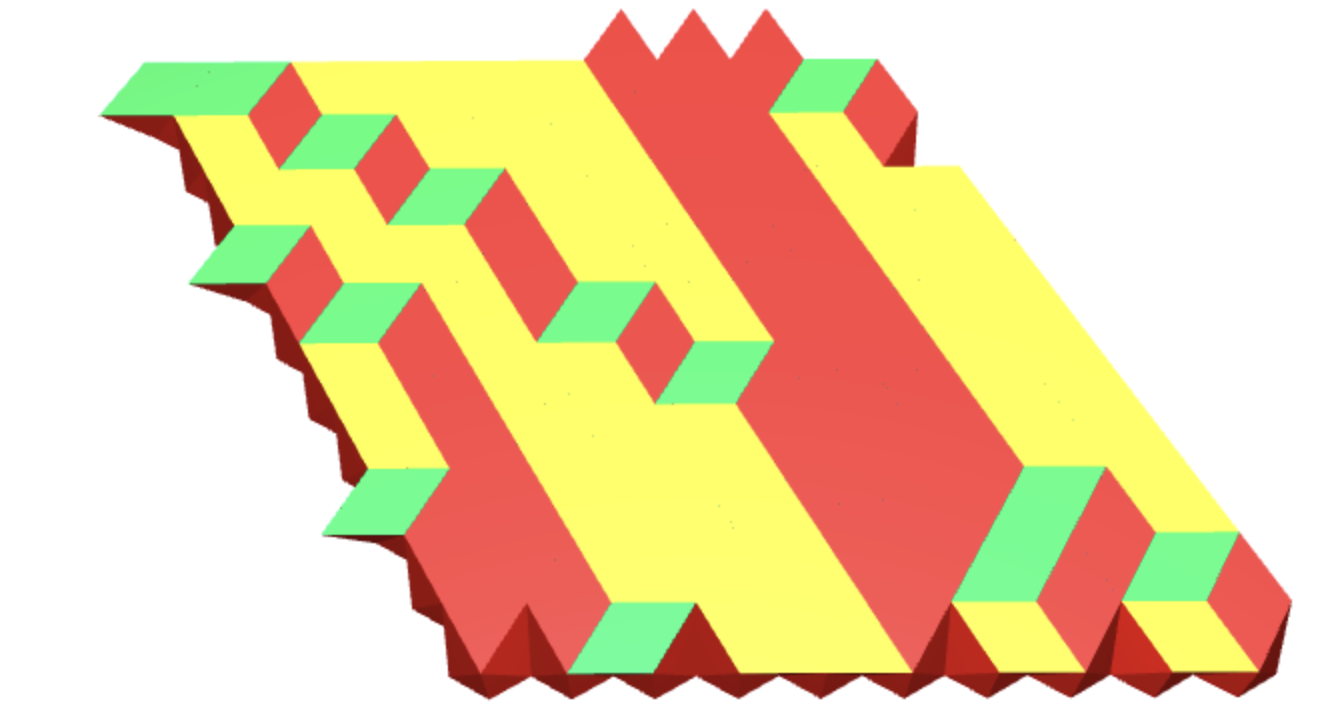}
  \subcaption{}
\end{subfigure}
\begin{subfigure}{.33\textwidth}
  \centering
  \includegraphics[height=2.5cm]{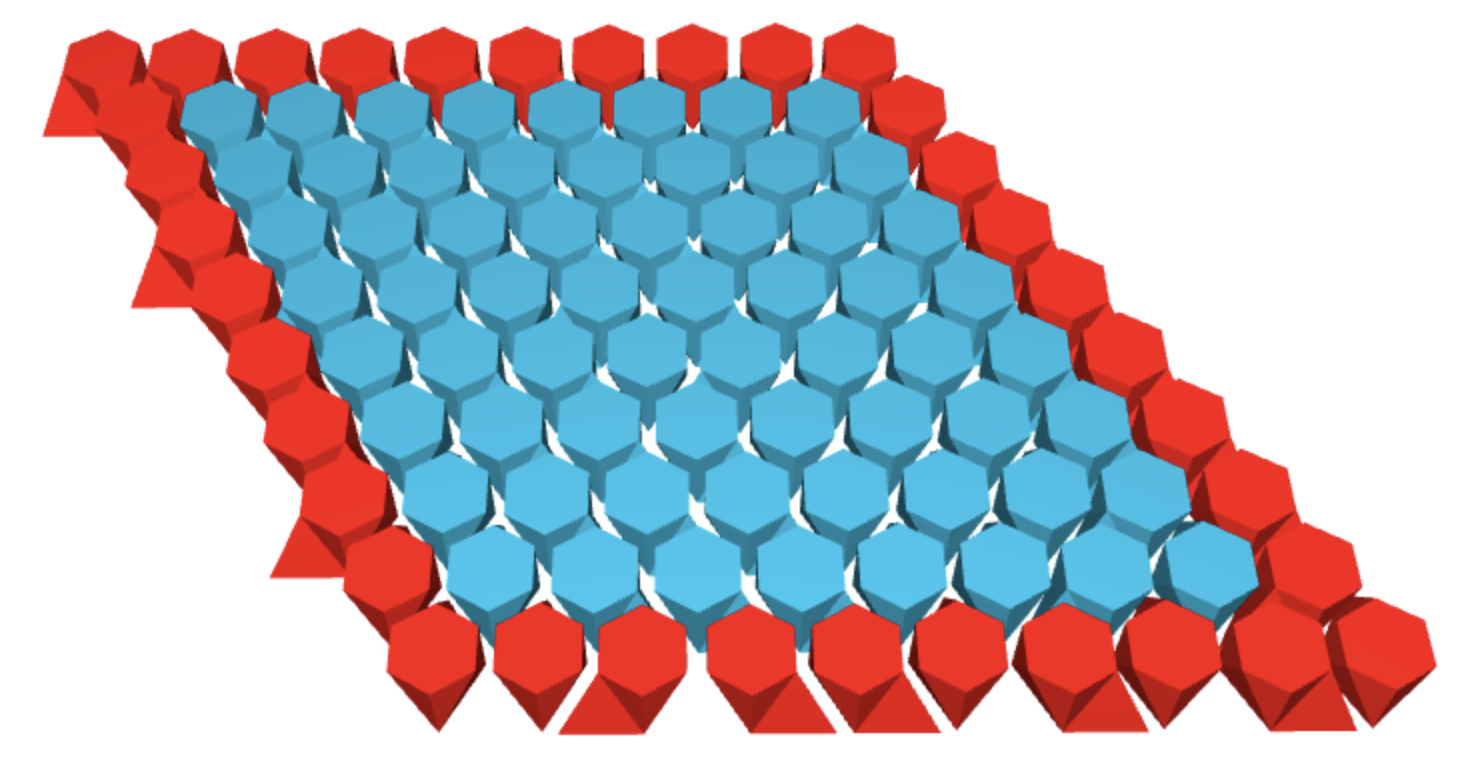}
  \subcaption{}
\end{subfigure}
\caption{(a) Interlocking assembly with RhomBlocks. (b) Corresponding lozenge tiling with lozenges coloured according to their orientation. (c) Exploded view of the assembly.}
\label{fig:RhomBlockAssembly}
\end{figure}

In Section \ref{sec:definition}, we provide a mathematical description of interlocking assemblies based on the work presented in \cite{GoertzenFIB}. In Section \ref{sec:infinitesimal}, we establish a connection between the mathematical definition of interlocking assemblies, first presented in \cite{GoertzenFIB}, and an infinitesimal definition of such assemblies formulated in \cite{wang_computational_2021}. In Section \ref{sec:proof}, we prove that with additional restrictions on the construction of assemblies with planar crystallographic symmetries, we obtain the interlocking assemblies.

\section{Definition of Interlocking Assemblies} \label{sec:definition}

In this section, we define the notion of interlocking assemblies. Intuitively speaking, an interlocking assembly is an assembly of blocks such that a subset of blocks, the \emph{frame}, is fixed and no block outside the frame can be removed from the assembly by continuous motions without intersecting other blocks. Before giving the formal definition of interlocking assemblies, we first have to define what an assembly of blocks is and what a motion is. We present several examples of assemblies which either possess the interlocking property or do not. In the last part of this section, we demonstrate how to prove that an assembly possesses the interlocking property based on infinitesimal motions. Parts of this section are based on the work presented in \cite{GoertzenFIB}.

\subsection{Rigid and Continuous Motions}

Given a subset $X \subset \mathbb{R}^3$, we want to describe motions of $X$ in three-dimensional Euclidean space. It is well-known that a rigid (non-continuous) motion can be obtained by composing a rotation and a translation given by a three-dimensional vector. Altogether, these motions form the so-called \emph{special Euclidean group}. Note that, a rotation in three-dimension space around the origin corresponds to a matrix $R\in \R^{3\times 3}$ such that $R$ is orthogonal and has determinant equal to $1$, i.e.\ $R\cdot R^\intercal = \mathbb{I}$ and $\det R=1$. The set of all rotational matrices of this form is denoted by $\mathrm{SO}(3)$.

\begin{definition}
    The \emph{special Euclidean group} also known as the set of rigid motions, denoted by $\mathrm{SE}(3)$, consists of  elements which can be represented as pairs $(R,v)$, where $R\in \mathrm{SO}(3)$ is a rotation matrix and $v$ a translation vector. The group $\mathrm{SE}(3)$ is isomorphic to the semidirect product $\mathrm{SO}(3)\ltimes \R^3$ with multiplication given by $(R,v)\cdot (R',v')=(R\cdot R',R\cdot v' + v)$.
\end{definition}

 The \emph{special Euclidean group} acts on $\R^3$ as summarised in the next remark.

\begin{remark}\label{rem:affine_action}
   The \textit{special Euclidean group} acts naturally on $\R^3$ via the following group action
   $$\mathrm{SE}(3) \times \R^3 \to \R^3, \left( (R,v),x \right)\mapsto R\cdot x + v,$$  where $R\cdot x$ is the vector $x$ rotated by the rotation matrix $R$. The following matrix representation of  $\mathrm{SE}(3)$ for a fixed basis of $\mathbb{R}^3$ can be used to encode the above action
\begin{equation}
    \rho:\mathrm{SE}(3) \to \mathrm{GL}(4), 
    (R,v) \mapsto \begin{pmatrix} R & v \\ 0 & 1   \end{pmatrix}.
\end{equation} 
 For this, we identify $\mathbb{R}^3$ with the set $\mathrm{Aff}\left(\R^3 \right) =\left\{ \begin{pmatrix} x \\ 1 \end{pmatrix} \mid x \in \mathbb{R}^3 \right\}\subset \mathbb{R}^4$, also known as the \emph{affine space}, and $\mathrm{SE}(3)$ then acts on $\R^3$ as follows
$$\mathrm{SE}(3)\times \R^3 \to \R^3, \left( (R,v) ,  \begin{pmatrix} x \\ 1 \end{pmatrix} \right) \mapsto \rho\left((R,v)\right) \cdot \begin{pmatrix} x \\ 1 \end{pmatrix}= \begin{pmatrix} R & v \\ 0 & 1   \end{pmatrix}\cdot \begin{pmatrix} x \\ 1 \end{pmatrix}=\begin{pmatrix} R\cdot x + v \\ 1 \end{pmatrix}.$$From now on, we identify $\mathrm{SE}(3)$ with its image of the above-mentioned matrix representation into $\mathrm{GL}(4)$. By $\mathbb{I}\in \mathrm{GL}(4)$, we denote the identity matrix and call it the \textit{trivial motion}. If the context allows, we refer to certain elements in $\mathrm{SE}(3)$ just by their corresponding rotation matrices or translation vectors.
The set of rigid motions $\text{SE}(3)$ inherits a topology as an isomorphic image of a subset of $\mathbb{R}^{4\times 4}$ equipped with the operator norm $\vertiii{  \cdot} _2 $ which, for a given matrix $A\in \mathbb{R}^{4\times 4}$, is defined via
$$\vertiii{  A} _2 = \max _{x\in \mathbb{R}^4,\lVert x \rVert _2= 1} \lVert A\cdot x \rVert _2 ,$$ where $A\cdot x \in \mathbb{R}^4$ is the matrix-vector product of $A$ and $x$, and $$\lVert x \rVert _2 = \sqrt{x_1^2+x_2^2+x_3 ^2+x_4^2}$$ is the Euclidean norm on $\mathbb{R}^4$.
\end{remark}

As shown in the previous remark, the elements of the \textit{special Euclidean group} act on $\R^3$ as rigid motions. In order to define immovability of blocks in an assembly, we continue with the more applicable definition of continuous motions.

\begin{definition}
   A \emph{continuous motion} is a map $\gamma \colon [0,1]\to \mathrm{SE}(3)\subset \mathrm{GL}(4)$ such that $\gamma$ is continuous (using the topology given in Remark \ref{rem:affine_action}) and $\gamma(0)=\mathbb{I}$ is the identity matrix in $\mathrm{SE}(3)$. Furthermore, we say $\gamma$ is \emph{admissible} if $\gamma$ is a continuous motion and differentiable in $0$. We say that $\gamma$ is trivial if $\gamma(t)=\mathbb{I}$ for all $t\in [0,1]$ and we write $\gamma \equiv \mathbb{I}$. For admissible motions, we further enforce that for non-trivial maps $\gamma$  the derivate at zero is non-zero, i.e.\ $\Dot{\gamma}(0)\neq 0$.
 \end{definition}

The assumption that the first derivative of a non-trivial admissible motion does not vanish is needed to establish a connection between the usual definition of interlocking assemblies and the infinitesimal version, see proof of Proposition \ref{proposition:infinitesimal}.

\begin{remark}\label{rem:lie_algebra}
    The group $\mathrm{SE}(3)$ is a Lie group. An admissible motion $\gamma$ can be differentiated in $0$ to obtain an element in the corresponding Lie algebra  $\mathfrak{se}(3)$. This is due to the fact, that we can extend the map $\gamma$ to a differentiable map $\Tilde{\gamma}:[-1,1]\to \text{SE}(3)$ by 
    \[
    \tilde{\gamma}\left(t\right)=\begin{cases}
    \gamma\left(t\right), & t\geq0,\\
    \gamma\left(-t\right)^{-1}, & t<0
    \end{cases}
    \]
    and the definition of the Lie algebra as the tangent space at the identity element of the underlying Lie group, see \cite[Chapter~1.4]{DifferentialGeometryKobayashi}.
    The Lie algebra $\mathfrak{se}(3)$ is a $6-$dimensional vector space with elements of the form $(\omega,t)=(\omega_1,\omega_2,\omega_3,t_1,t_2,t_3)$ which can be embedded into the $\R^{4\times 4}$ matrix-space as follows
\[
\begin{pmatrix}0 & -\omega_{3} & \omega_{2} & t_{1}\\
\omega_{3} & 0 & -\omega_{1} & t_{2}\\
-\omega_{2} & \omega_{1} & 0 & t_{3}\\
0 & 0 & 0 & 0
\end{pmatrix}
\]
and thus we can define a multiplication of elements $\left(\omega,t\right)\in\mathfrak{se}(3)$ with elements in $p\in \R^3$ via
\[
\left(\omega,t\right).p=\begin{pmatrix}0 & -\omega_{3} & \omega_{2} & t_{1}\\
\omega_{3} & 0 & -\omega_{1} & t_{2}\\
-\omega_{2} & \omega_{1} & 0 & t_{3}\\
0 & 0 & 0 & 0
\end{pmatrix} \cdot \begin{pmatrix}p \\ 1\end{pmatrix}=\begin{pmatrix} \omega\times p+t \\ 1\end{pmatrix},
\]
where $\times:\mathbb{R}^3\times \mathbb{R}^3 \to \mathbb{R}^3$ denotes the cross product given by

\[
\omega\times p=\begin{pmatrix}\omega_{2}p_{3}-\omega_{3}p_{2}\\
\omega_{3}p_{1}-\omega_{1}p_{3}\\
\omega_{1}p_{2}-\omega_{2}p_{1}
\end{pmatrix}.
\]
For more on the correspondence of $\mathrm{SE}(3)$ and $\mathfrak{se}(3)$, we refer to \cite[Chapter~6]{HarmonicAnalysisApplied}.

\end{remark}

\subsection{Assemblies of Blocks}

In order to define interlocking assemblies, we first need to define the notion of an assembly of blocks.

\begin{definition}\label{def:block}
    Let $\emptyset\neq X\subset \R^n$ be a connected, compact set (in the standard Euclidean topology) with $\overline{\mathring{X}}=X$, i.e.\ $X$ equals the closure of its interior. We call $X$ a \emph{block} with boundary denoted by $\partial X$. 
\end{definition}

The constraints applied to a block $X$ are inspired by practical applications and the geometric shapes of objects in three-dimensional space. Additionally, the use of the condition $\overline{\mathring{X}}=X$ serves to exclude any type of degenerations. Frequently, extra restrictions are imposed on $X$. For example, when $n=3$, we focus on blocks $X$ that have a polyhedral boundary.

Next, we define assemblies of blocks.

\begin{definition}
    An \emph{assembly} is a family of blocks $(X_i)_{i\in I}$ for a non-empty countable index set $I$ such that $X_i \cap X_j = \partial X_i \cap \partial X_j$ for all $i,j\in I$ with $i\neq j$.
\end{definition}
 This condition enforces that two distinct blocks of an assembly only touch at their boundary and do not ``penetrate'' each other, i.e.\ their interiors do not intersect. Furthermore, we allow infinite assemblies of blocks, which is compatible with the assemblies constructed in \cite{goertzen2024constructing} that carry a doubly-periodic symmetry.

In the following, we provide several intuitive examples of interlocking and non-interlocking assemblies before giving the formal definition of an interlocking assembly in the following section. We start with two ways of assembling cubes.

Figure \ref{fig:CubeAssembly} displays a canonical way of assembling cubes in a doubly-periodic way, i.e.\ we translate a given cube in two directions using two vectors $x,y\in \R^3$. When assembling cubes, as shown in Figure \ref{fig:CubeAssembly}, it is always possible to move cubes by shifting them upwards, even when neighbouring cubes are constrained from moving. In Figure \ref{fig:CubeInterlocking}, we see an alternative way of assembling cubes in a doubly-periodic fashion. We can place the assembly between two parallel planes such that the midsection of each cube, i.e.\ its intersection with the plane going through the middle of the assembly, is given by a hexagon. This assembly can be generated with the method presented in \cite{EstDysArcPasBelKanPogodaevConvex}, which is based on the well-known statement that any convex body can be constructed by a finite intersection of half-spaces, see \cite{GrunbaumPolytopes}. In Example \ref{example:cube_interlocking}, we show the assembly in Figure \ref{fig:CubeInterlocking} indeed gives rise to an interlocking assembly.

\begin{figure}[H]
\centering
\begin{minipage}{.4\textwidth}
  \centering
  \includegraphics[height=3cm]{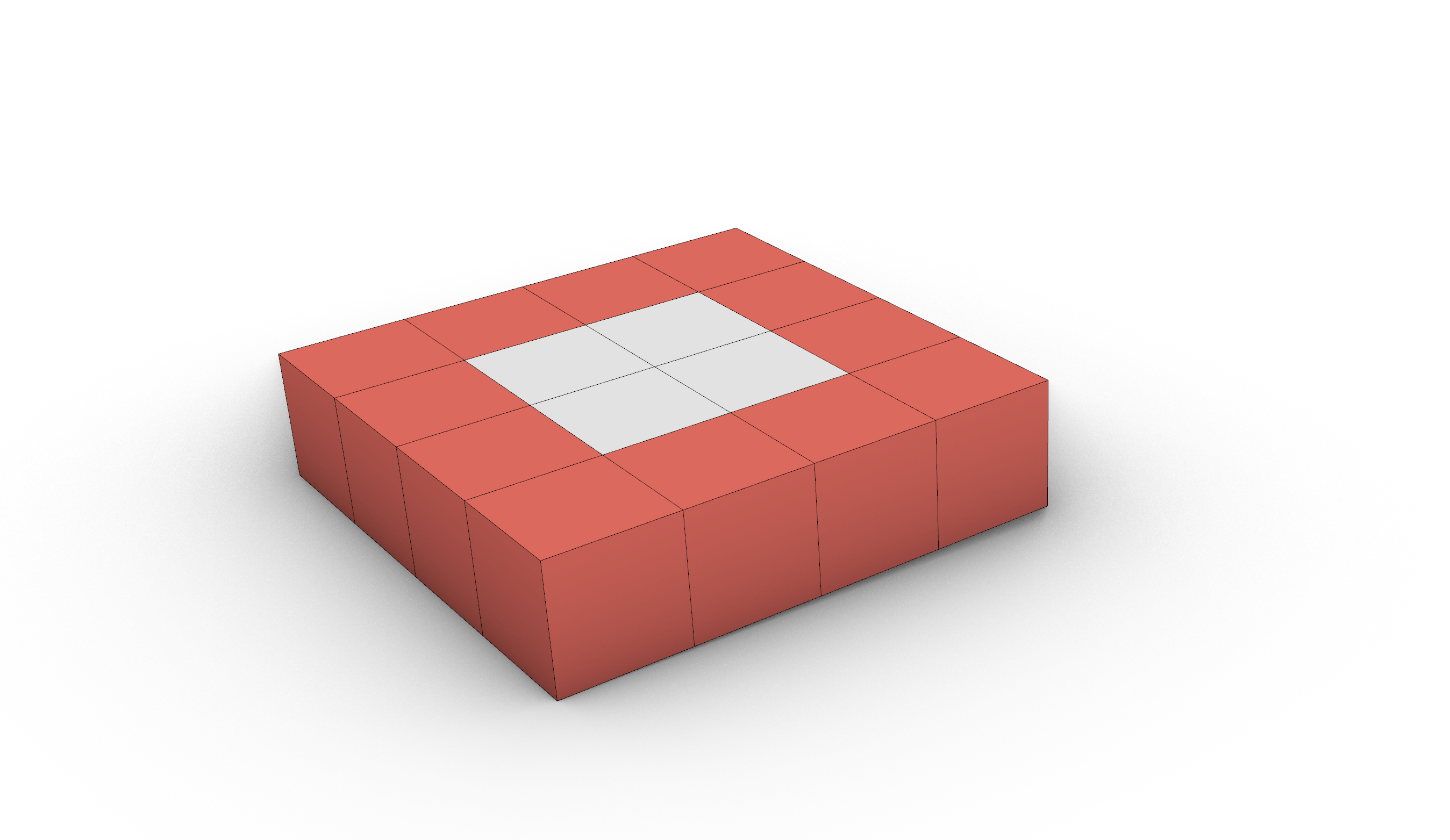}
  \subcaption{}
    \label{fig:CubeAssembly}
\end{minipage}%
\begin{minipage}{.4\textwidth}
  \centering
  \includegraphics[height=3cm]{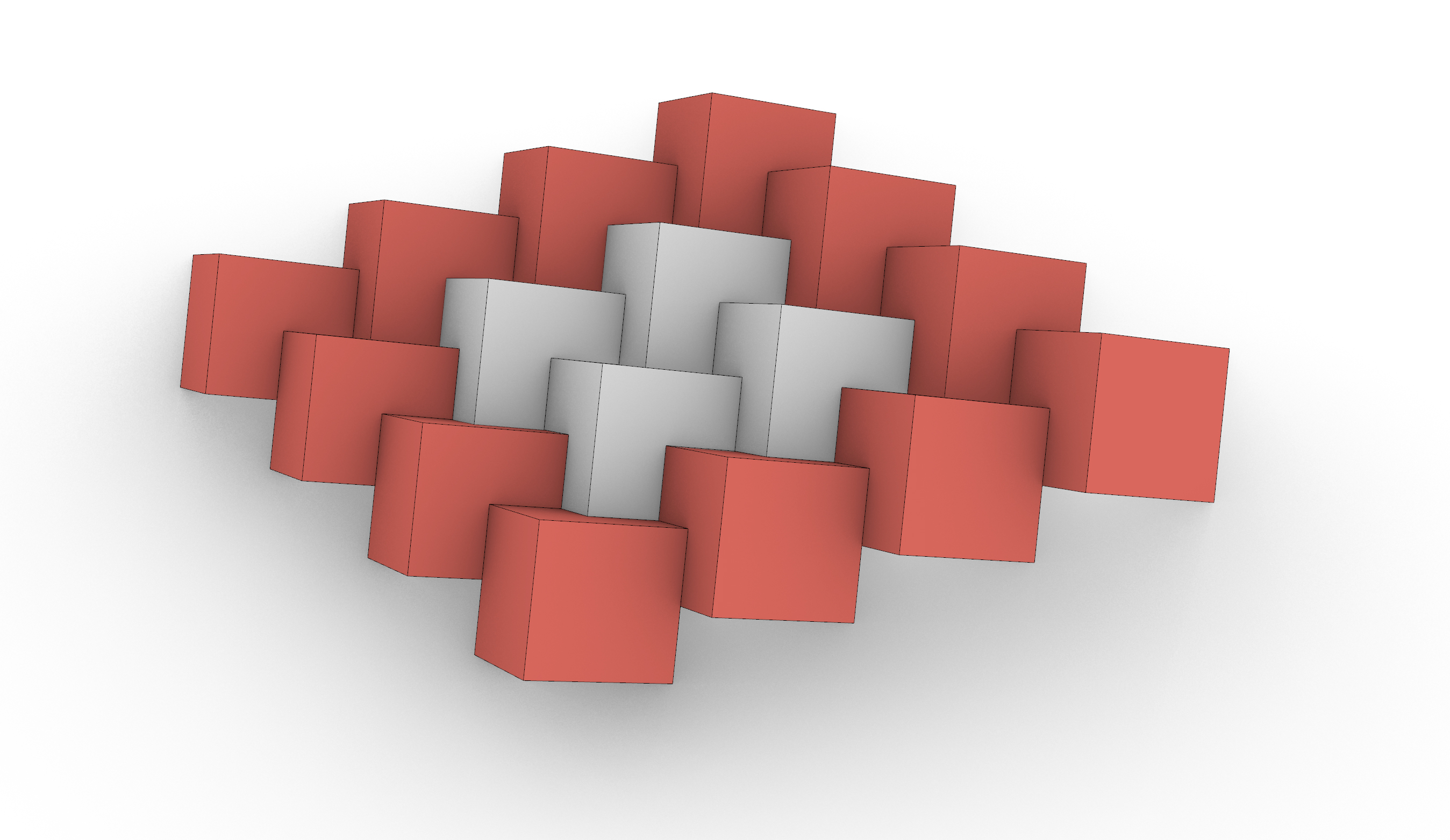}
  \subcaption{}
    \label{fig:CubeInterlocking}
\end{minipage}
\caption{Two ways of assembling cubes in a doubly-periodic fashion: (a) a simple cube assembly where grey cubes can be moved even when fixing the red cubes from moving. (b) An interlocking assembly of cubes which can be generated by the methods presented in \cite{EstDysArcPasBelKanPogodaevConvex}.}
\label{fig:cube-assemblies}
\end{figure}

 An assembly of regular tetrahedra as shown in Figure \ref{fig:p4Tetrahedra} (see \cite{a_v_dyskin_topological_2003,glickman_g-block_1984}) gives another example of an interlocking assembly by choosing a frame consisting of the red tetrahedra to be fixed. Then it follows that the grey blocks cannot be removed from the assembly by continuous motions without causing penetrations with other blocks. Figure \ref{fig:p3Tetrahedra} and \ref{fig:p6Tetrahedra} show similar assemblies with non-regular tetrahedra, i.e.\ tetrahedra whose faces are not equilateral triangles. A frame is not shown in Figure \ref{fig:p3Tetrahedra} and \ref{fig:p6Tetrahedra}, and a possible frame could consist of the outer tetrahedra. All the assemblies shown in Figures \ref{fig:CubeInterlocking}-\ref{fig:p6Tetrahedra} are based on regular tessellations of the plane with wallpaper symmetries and can be obtained by the intersections of half-spaces as described in \cite{EstDysArcPasBelKanPogodaevConvex}.

\begin{figure}[H]
\centering
\begin{subfigure}{.33\textwidth}
  \centering
  \includegraphics[height=3cm]{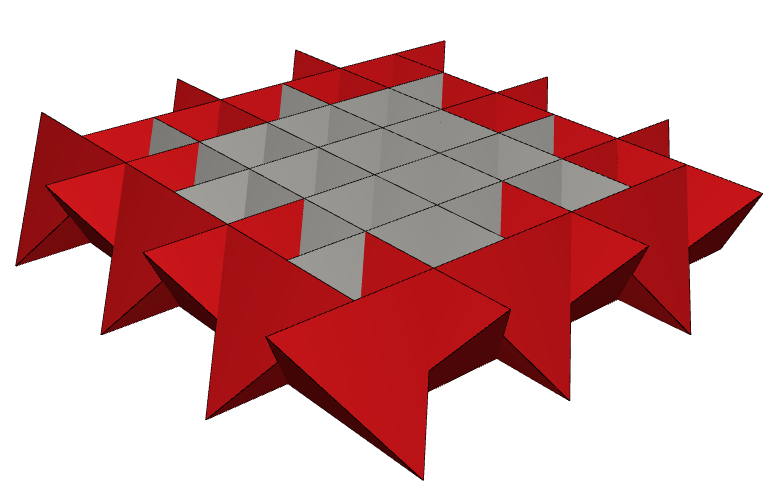}
  \subcaption{}
  \label{fig:p4Tetrahedra}
\end{subfigure}%
\begin{subfigure}{.33\textwidth}
  \centering
  \includegraphics[height=3cm]{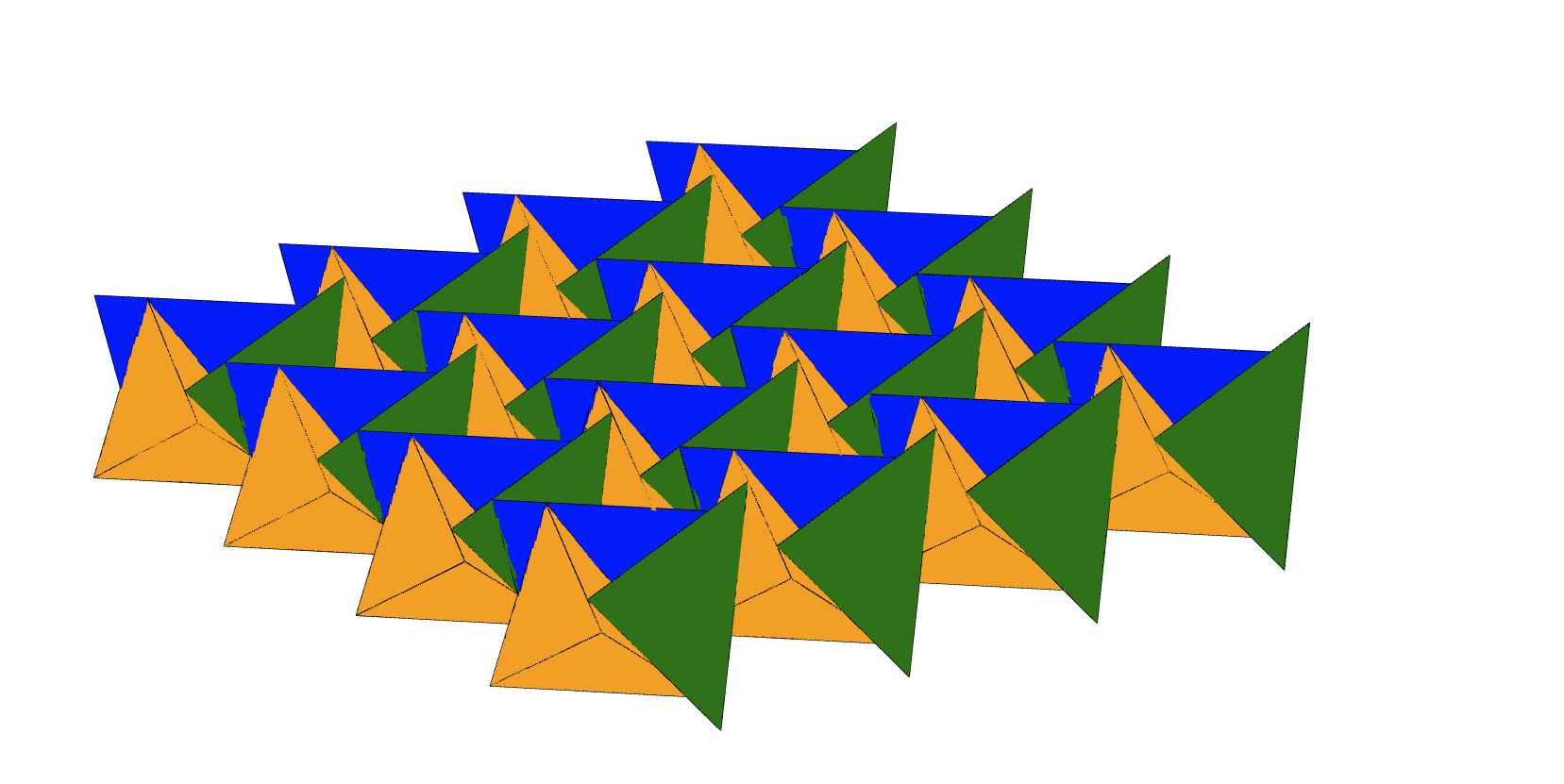}
  \subcaption{}
  \label{fig:p3Tetrahedra}
\end{subfigure}
\begin{subfigure}{.33\textwidth}
  \centering
  \includegraphics[height=3cm]{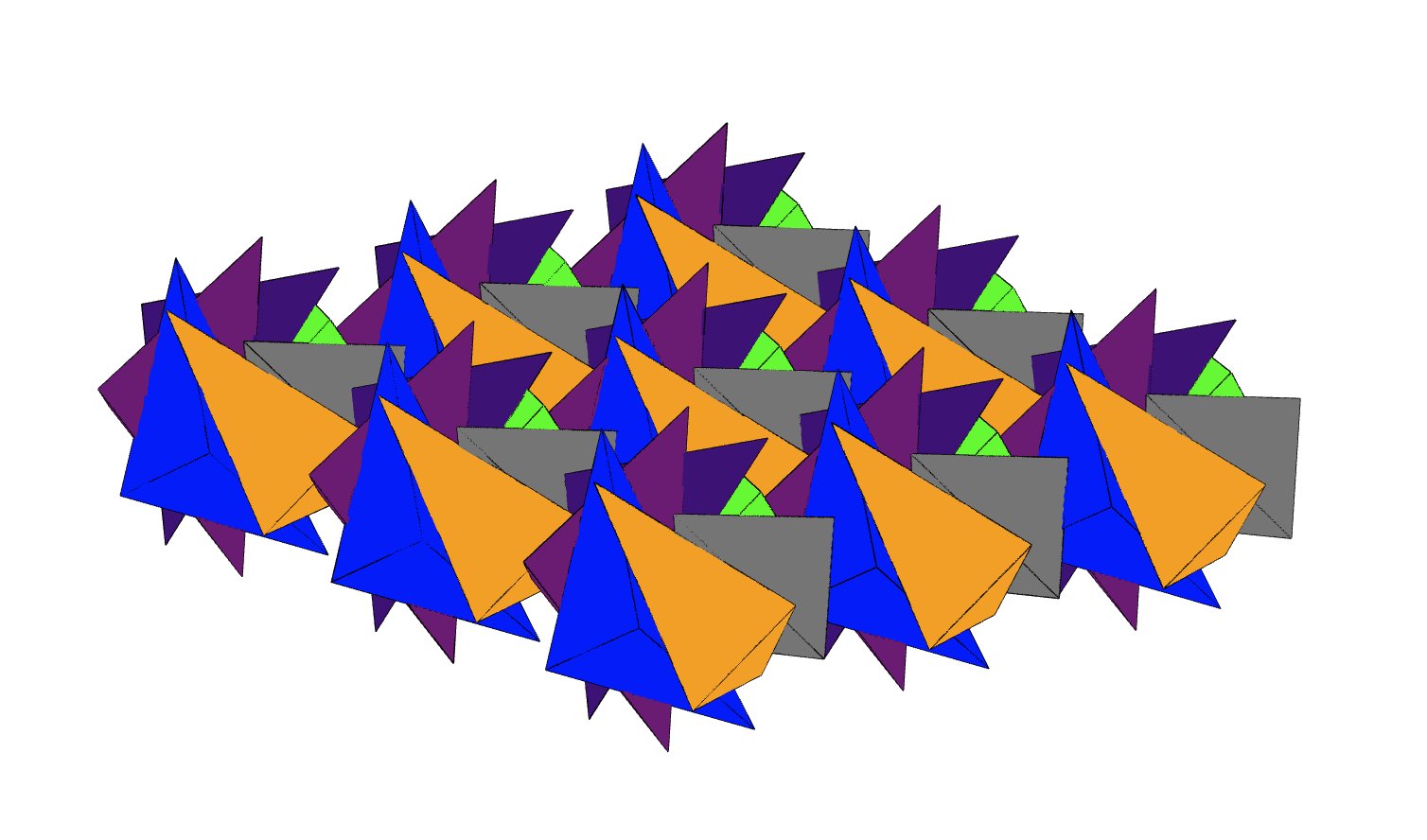}
  \subcaption{}
  \label{fig:p6Tetrahedra}
\end{subfigure}
\caption{Collection of assemblies with tetrahedra: (a) regular tetrahedra interlocking with p4-symmetry frame in red, see \cite{glickman_g-block_1984,dyskin_new_2001}, (b) non-regular tetrahedra assembly with p3-symmetry, see \cite{piekarski_floor_2020} for a similar block, (c) non-regular tetrahedra assembly with p6-symmetry.}
\label{fig:tetrahedra}
\end{figure}

Before we define interlocking assemblies formally, we provide another example of an assembly of modified cubes which does yield an interlocking assembly, even though a single block cannot be removed from the assembly while fixing all other blocks and especially its neighbouring blocks. This block is obtained by modifying a cube in such a way that on one side a pyramid is added, which is removed from a different side of the cube, see Figure \ref{fig:modified_cube}.
\begin{figure}[H]
    \centering
    \begin{minipage}{.4\textwidth}
        \centering
        \includegraphics[height=4cm]{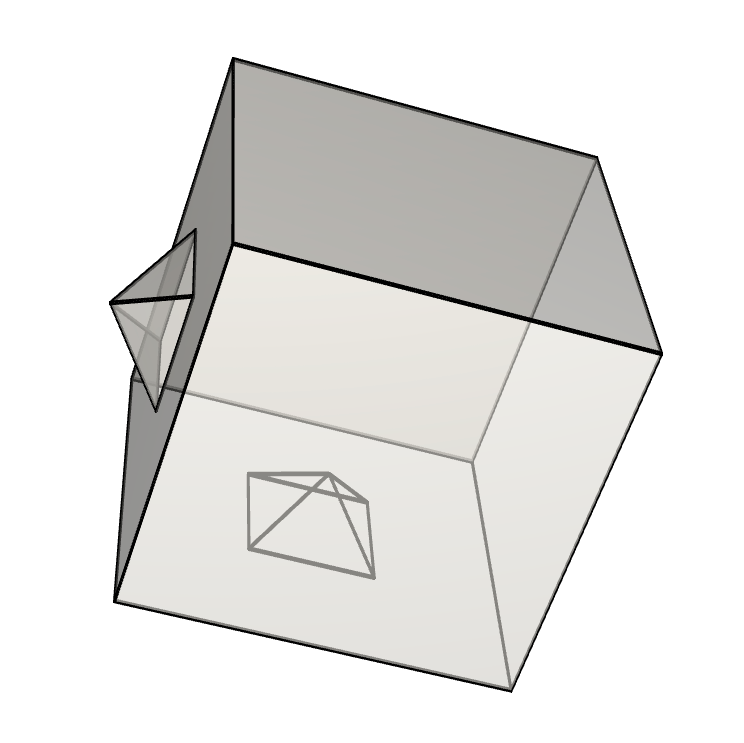}
        \subcaption{}
        \label{fig:modified_cube}
    \end{minipage}
    \begin{minipage}{.4\textwidth}
        \centering
        \includegraphics[height=4cm]{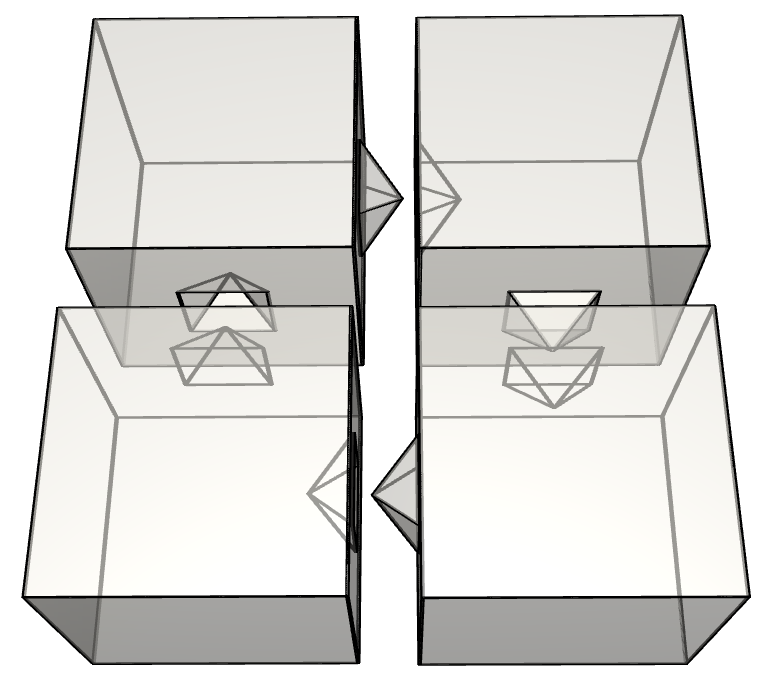}
        \subcaption{}
        \label{fig:modified_cube_assembly}
    \end{minipage}
    \caption{(a) Modified cube. (b) Exploded view of assembling four copies of the modified cube.}
\end{figure}

We can assemble this block in groups of four similarly to the cube assembly given in Figure \ref{fig:CubeAssembly}.
However, the assembly shown in Figure \ref{fig:ModifiedCubeAssembly} is not an interlocking assembly, as interlocking only occurs in groups of four blocks. The set consisting of the four gray blocks can simultaneously be moved without causing intersections by applying the admissible motion $[0,1]\to \mathrm{SE(3)},t\mapsto \left(\R^3\to \R^3, x\mapsto x + \left(0,0,t \right)^\intercal\right)$,  see Figure \ref{fig:ModifiedCubeAssemblyShift}.

\begin{figure}[H]
\centering
\begin{subfigure}{.3\textwidth}
  \centering
  \includegraphics[height=3.5cm]{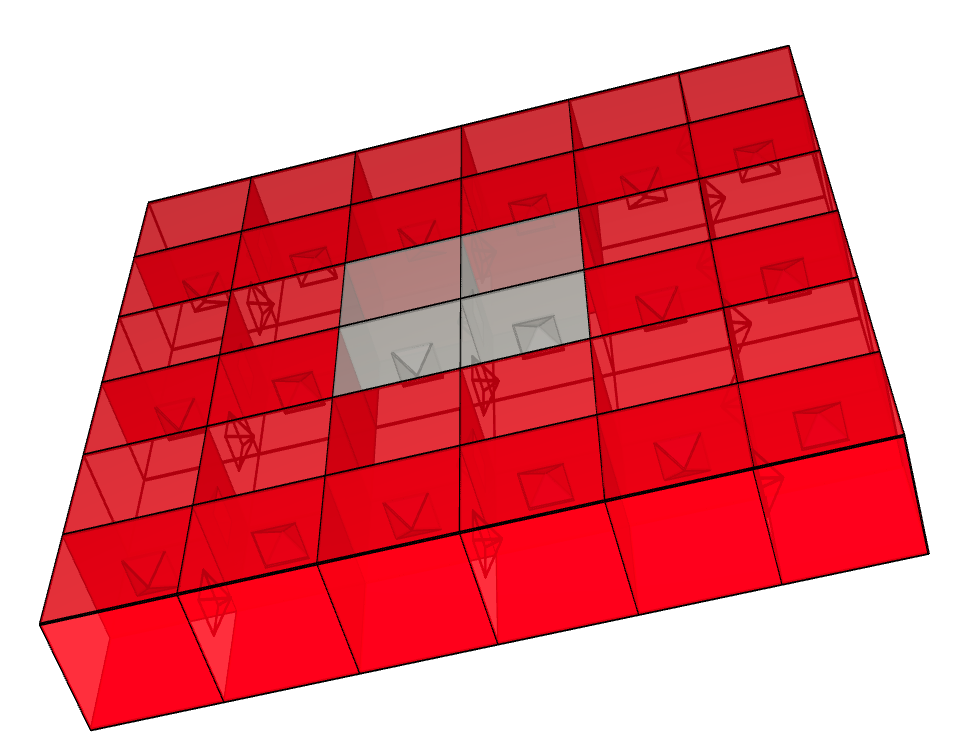}
  \subcaption{} \label{fig:ModifiedCubeAssembly}
\end{subfigure}%
\begin{subfigure}{.3\textwidth}
  \centering
  \includegraphics[height=3.5cm]{Interlocking_Figures/cube_deformed_assembly2.png}
  \subcaption{} \label{fig:ModifiedCubeAssembly2}
\end{subfigure}%
\begin{subfigure}{.3\textwidth}
  \centering
  \includegraphics[height=3.5cm]{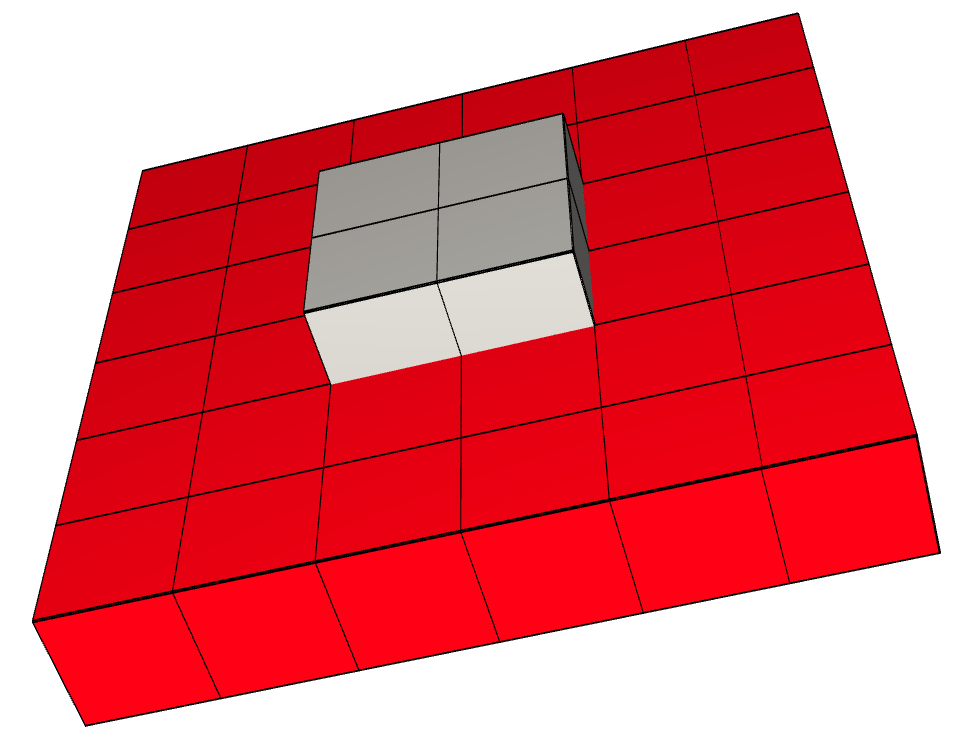}
  \subcaption{}
  \label{fig:ModifiedCubeAssemblyShift}
\end{subfigure}
\caption{(a,b) An assembly with modified cubes. (c) Even though each block is restrained by its neighbours, four blocks can be moved simultaneously.}
\label{fig:counter_example_interlocking}
\end{figure}

\subsection{The Definition of Interlocking Assemblies}

The following definition ensures that in an interlocking assembly, moving any finite subset of blocks leads to a violation of the assembly condition.

\begin{definition}\label{def:interlocking}
   An \emph{interlocking assembly} is an assembly of blocks $(X_i)_{i\in I}$ together with a subset $J\subset I$, called \emph{frame}, such that for all finite non-empty sets $\emptyset \neq T \subset I\setminus J$ and for all non-trivial admissible motions $(\gamma_i)_{i\in T}$ there exists $t\in [0,1]$ and $i,j\in I$ (set $\gamma_\ell\equiv \mathbb{I}$ if $\ell\notin T$ for $\ell\in \{i,j \}$) with 
\begin{equation}\label{eq:interlockingCondition}
    \gamma_i(t)(X_i) \cap \gamma_j(t)(X_j) \neq \partial \gamma_i(t)(X_i) \cap \partial \gamma_j(t)(X_j). \tag{$\bowtie$}
\end{equation}
If we restrict the motions to translations, we say that the assembly is a \emph{translational interlocking assembly}.
\end{definition}

 This definition is equivalent to saying that we cannot move a finite number of blocks not contained in the frame without causing intersections of blocks. 

 \begin{remark}
The term \emph{topological interlocking assembly} is commonly used in the engineering and architecture literature to describe interlocking assemblies with a peripheral frame or ``force'' holding together the blocks, see \cite{EstrinDesignReview}. In a mathematical context, this terminology may lead to confusion, as it implies a topological classification (e.g., spheres or tori based on their genus) that diverges from the primarily geometric nature of these assemblies. To avoid such confusion and ensure clarity, we employ the term \emph{interlocking assembly}, omitting the prefix topological. This decision puts an emphasis on the geometric aspects of these structures and aims to avoid the potential confusion between topological and geometric concepts.

Moreover, the concept of \emph{interlocking puzzles} is also compatible with our definition above. Here, a special block in the assembly labelled as the ``key'' is used to lock the whole assembly, i.e. the key viewed as an element inside the block together with any other block yields the frame of an interlocking assembly as given in Definition \ref{def:interlocking}.

The concepts of topological interlocking and interlocking puzzles both deal with the potential for assembly and disassembly. Specifically, if the frame is no longer fixed, the parts of the assembly can often be taken apart. This idea is connected to \emph{assembly planning}, which focuses on the possibility of moving blocks from a starting position to an ending position without causing any penetrations, as outlined in \cite{wilson_geometric_1992}.
\end{remark}

\section{Infinitesimal Interlocking Criterion}\label{sec:infinitesimal}

In this section, we build on the observation that the definition of interlocking assemblies incorporates admissible motions which are differentiable maps of the form
$$\gamma \colon [0,1]\to \mathrm{SE}(3),\gamma(0)=\mathbb{I}$$
and we can differentiate $\gamma$ in $0$ to obtain an element in the Lie algebra $\mathfrak{se}(3)$, also known as the algebra of infinitesimal motions acting on $\R^3$, see Remark \ref{rem:lie_algebra}. This action can be exploited in order to give a linearised version of Definition \ref{def:interlocking} based on infinitesimal motions, which turns out to be the one given in \cite{wang_computational_2021}. For this, we consider two blocks $X_i,X_j$ inside an assembly of blocks with polyhedral boundary $(X_i)_{i\in I}$ and assume that the common boundary of the two blocks, i.e.\ $\partial X_i \cap  \partial X_j$ can be triangulated by contact triangles. For each contact triangle $f$ given by three vertices, we compute a normal vector $n$ pointing towards the block $X_j$. Let $p$ be one of the vertices of the given contact triangle $f$. When given two admissible motions $\gamma_i,\gamma_j$ for the blocks $X_i,X_j$, we can differentiate them in $0$ to obtain elements in $\Dot{\gamma_i}(0)=(\omega_i,t_i),\Dot{\gamma_j}(0)=(\omega_j,t_j)\in \mathfrak{se}(3)$ and act with them on the point $p$. In Proposition \ref{proposition:infinitesimal}, it is shown that the interlocking criterion \eqref{eq:interlockingCondition} translates into the inequality
$$
\left( (\omega_j,t_j).p-(\omega_i,t_i).p \right) \cdot n = \left( \omega_j \times p + t_j - (\omega_i \times p + t_i) \right) \cdot n \geq 0,
$$
which, using the rule that for all $\omega, p \in \mathbb{R}^3$, $(\omega \times p) \cdot n = (p \times n) \cdot \omega$, can be equivalently formulated as
\begin{equation}\label{ineq:infinitesimal_ineq_base}
    \left( (-p \times n)^\intercal, -n^\intercal, (-p \times n)^\intercal, -n^\intercal \right) \cdot \begin{pmatrix} \omega_i \\ t_i \\ \omega_j \\ t_j \end{pmatrix} \geq 0.
\end{equation}
The system of inequalities of the form above for a whole assembly are given in Definition \ref{def:infinitesimal_interlocking} and in Proposition \ref{proposition:infinitesimal}, we show that these inequalities indeed enforce the interlocking property, as given in Definition \ref{def:interlocking}.
In Figure \ref{fig:infinitesimal_crit}, we see a schematic illustration of this approach of modelling face to face contact using infinitesimal motions. 

\begin{figure}[H]
    \centering
    \scalebox{0.4}{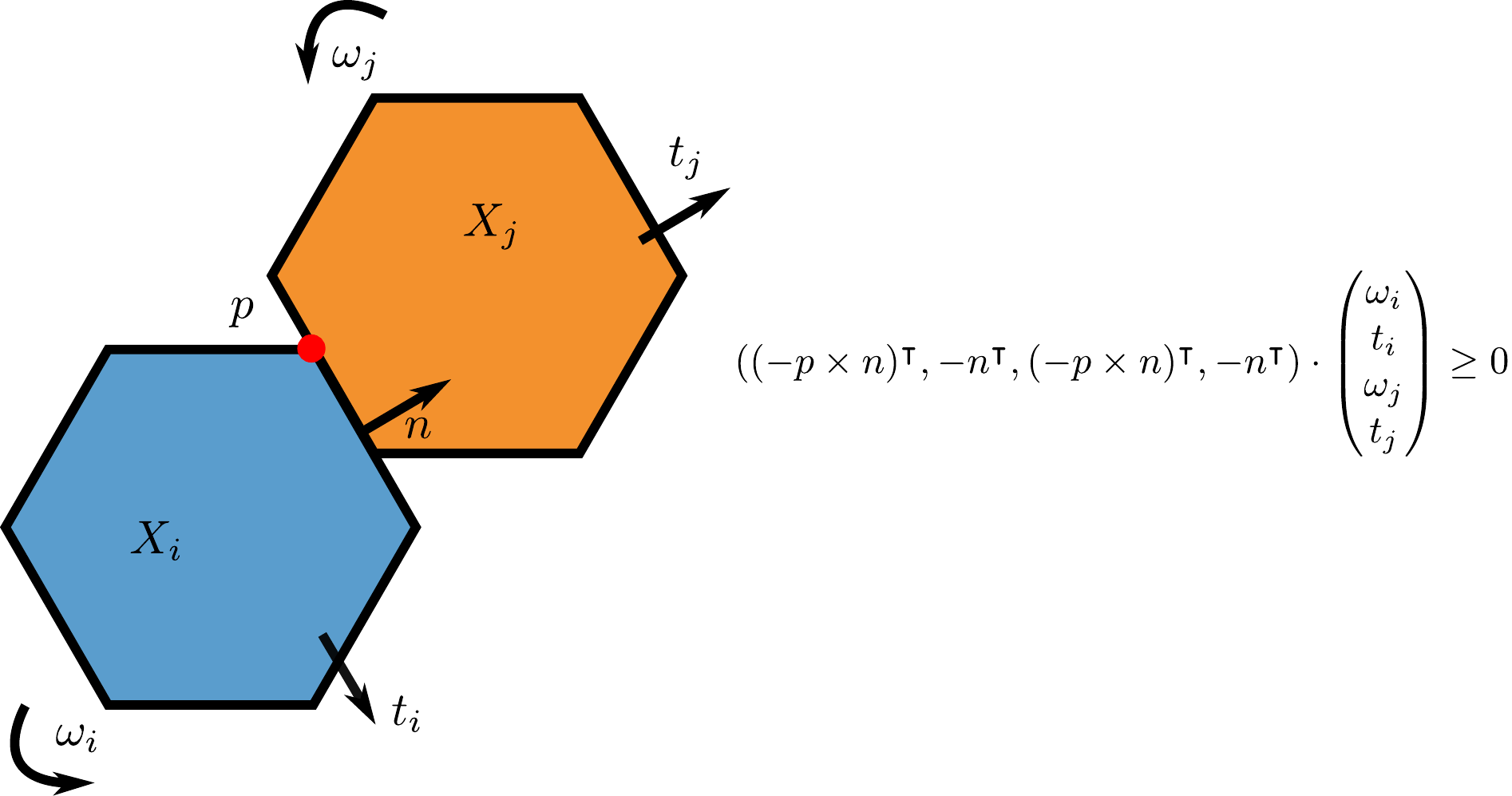}
    \caption{Schematic figure for interlocking criteria (after \cite{wang_computational_2021}): For two blocks $X_i,X_j$ and a contact point $p$ at a contact face with normal $n$ pointing towards the block $X_j$, we can derive an inequality corresponding to a non-penetration rule. Here, the motions of each block are associated with a tuple of infinitesimal motions, consisting of an angular momentum $\omega$ and a translation $t$.}
    \label{fig:infinitesimal_crit}
\end{figure}

\begin{remark}
    In \cite{wang_design_2019} an alternative infinitesimal interlocking criteria for convex blocks is presented. The idea is to replace the contact point $p$ in Inequality \eqref{ineq:infinitesimal_ineq_base} by $p-c_i$ and $p-c_j$, where $c_i$ and $c_j$ are the centres of the two convex blocks  $X_i$ and $X_j$, respectively. Moreover, it is shown in \cite{wang_design_2019} that this criterion can be translated into an equilibrium analysis using Farkas lemma, which gives a connection between linear inequalities and linear equalities. This approach of relying on the centres of blocks is generalised to non-convex blocks in \cite{stuttgen_modular_2023} by subdividing non-convex blocks into smaller convex blocks. In this work, we focus on the definition of infinitesimal interlocking assemblies as presented in \cite{wang_computational_2021}, which does not use the centres of each block.  This method can be also used for assemblies with non-convex blocks, and we establish a connection to the definition of interlocking assemblies in Proposition \ref{proposition:infinitesimal}.
\end{remark}

Since, we also consider infinite assemblies, we first define matrix-vector multiplications of infinite matrices.

\begin{remark}
    Let $\left(a_{ij}\right)=A\in\mathbb{R}^{\mathbb{N}\times\mathbb{N}}$ be an infinite dimensional matrix such that for all $i\in\mathbb{N}$ the set $\{j\mid a_{ij}\neq0\}$ is finite, i.e.\ in each row there are only finitely many non-zero entries. Then we can define the matrix-vector multiplication with a vector $x\in\mathbb{R}^{\mathbb{N}}$ in the usual way. In some cases, we restrict to vectors with finite support. Here, we write $\mathrm{supp}\left(x\right)=\{i\mid x_{i}\neq0\}$ and say that $x$ has finite support if $\lvert\mathrm{supp}\left(x\right) \rvert<\infty$.
\end{remark}

The following definition, based on the work \cite{wang_computational_2021},  establishes a connection between an assembly of blocks with triangulated polyhedral boundary and a set of inequalities described by a matrix modelling face-to-face contacts.

\begin{definition}\label{def:infinitesimal_interlocking}
    Let $X=(X_i)_{i\in I}$ be an assembly of polyhedra with triangulated polyhedral boundary, such that if the intersection of two blocks at their boundary has area larger than zero it is already given by common triangular faces. Let $J\subset I$ be a subset of $I$ (possibly empty). Let $C$ be the number of contact triangles between blocks inside the assembly, which are each defined by three vertices. Then we define the \emph{infinitesimal interlocking matrix} $A_{X,J}$ with $3\cdot C$ rows indexed by the defining vertices of all contact triangles and $6\cdot \lvert I\setminus J\rvert$ columns corresponding to possible admissible motions $(\omega,t)\in \R^6$ for each block not contained in the frame. For each contact triangle $F$ of blocks $i,j\in I\setminus J$, where $p$ is a defining vertex of $F$ and $n$ is a normal vector of $F$ pointing towards the block $j$, we obtain a row of $A_{X,J}$ of the form $$\big(0,\dots,\underbrace{(-p\times n)^\intercal,-n^\intercal}_{i},0,\dots,0,\underbrace{(p\times n)^\intercal,n^\intercal}_{j},0,\dots,0\big).$$For a block $i\in I\setminus J$ having contact with a block $j \in J$, we obtain a row of the form $$\big(0,\dots,\underbrace{(-p\times n)^\intercal,-n^\intercal}_{i},0,\dots,0,\dots,0\big).$$ The assembly is called \emph{infinitesimally interlocked} with \emph{frame} $J$ if $$ A_{X,J}\cdot x \geq 0 \text{ implies } x=0,$$ for any $x$ with finite support (the inequality $\geq 0$ is understood componentwise). Here, we identify $x$ with the family $(\gamma_i)_{i\in I}$ of infinitesimal motions, where $\gamma_i \in \mathfrak{se}(3)$ is of the form $\gamma_i=(\omega_i,t_i)\in \R^6$. By considering identical rows only once, we can reduce the system into an equivalent system with fewer inequalities and call the resulting matrix the \emph{reduced infinitesimal interlocking matrix}. We define the assembly to be \emph{infinitesimal translational interlocked} if there are no translational admissible motions, i.e.\ we set $\omega_i=0$ for each block and thus only consider for each block $X_i$  infinitesimal motions of the form $(0,t_i)$. In this case, we only need one row for each face-to-face contact, as we no longer dependent on the points $p$. The \emph{infinitesimal interlocking polytope} is defined as the set of vectors $x$ with finite support and $A_{X,J} \cdot x\geq 0$ (componentwise equal or larger than $0$).  
\end{definition}

 Using the cross product in the definition above is tied to the three-dimensional case as the dimension of $\text{SE}(3)$ equals $3$, and for general values $n$ the dimension of $\text{SE}(n)$ equals $\frac{n(n-1)}{2}$. 

\begin{remark}
     Both the interlocking and infinitesimal interlocking definition can be generalised to any dimension. In the case of the infinitesimal interlocking definition, we have to adapt the definition of the rows of the interlocking matrix by changing the part corresponding to infinitesimal rotations $\omega$ which is derived from the fact $(p\times n)\cdot \omega=(\omega \times p)\cdot n$, see Proposition \ref{proposition:infinitesimal}.
\end{remark}

Before showing that the infinitesimal definition implies the regular definition of an interlocking assembly, we give several examples of how to compute the interlocking matrix and showcase that even for assemblies with relatively few blocks, the matrix $A$ can be quite large. We start with the simple cube assembly given in Figure \ref{fig:CubeAssembly}, and show that its interlocking matrix has a non-trivial kernel with elements corresponding to admissible motions.

\begin{figure}[H]
\centering
\begin{minipage}{.33\textwidth}
  \centering
  \includegraphics[height=4cm]{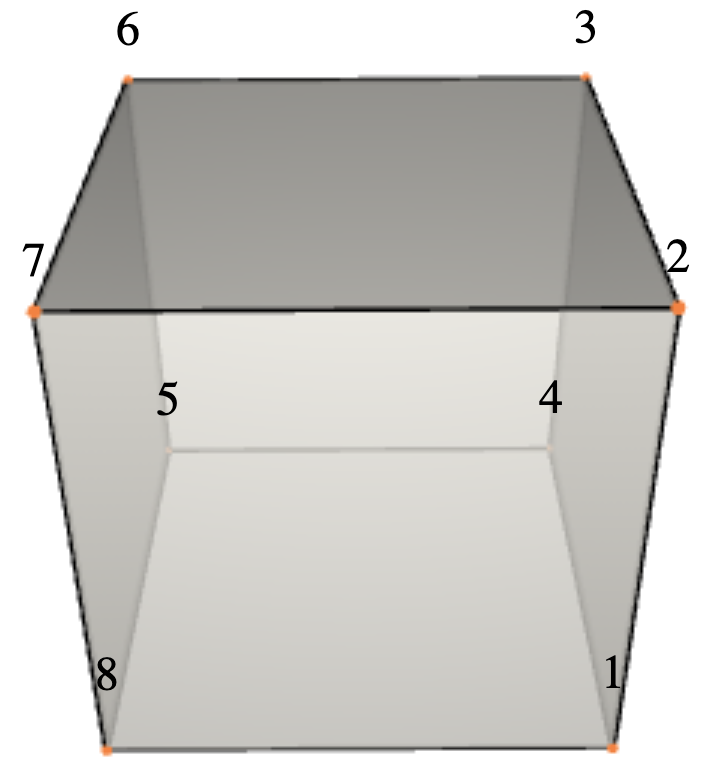}
  \subcaption{}
    \label{fig:Cube}
\end{minipage}%
\begin{minipage}{.33\textwidth}
  \centering
  \includegraphics[height=4cm]{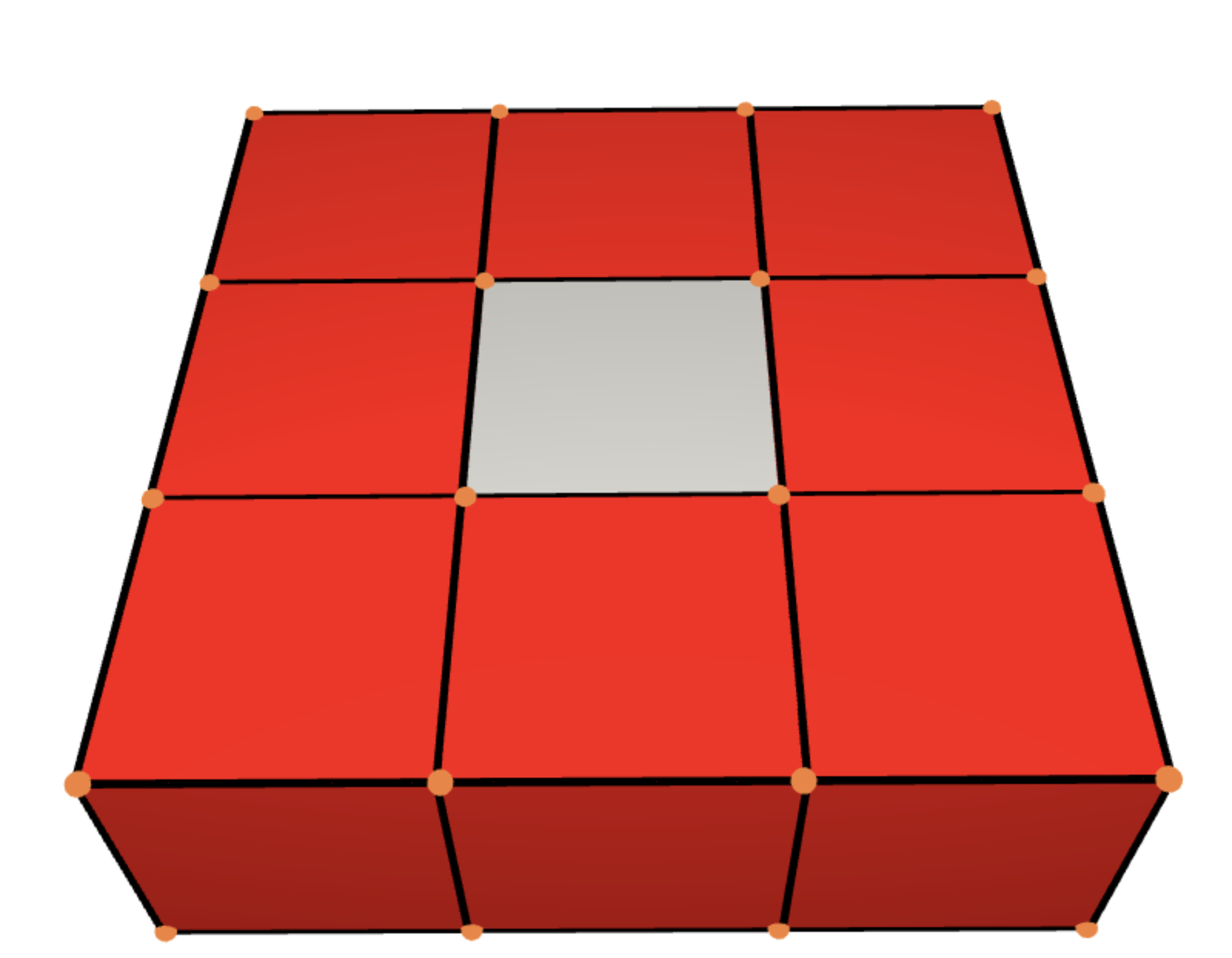}
  \subcaption{}
    \label{fig:CubeAssemblyNonInterlocking}
\end{minipage}
\begin{minipage}{.33\textwidth}
  \centering
  \includegraphics[height=4cm]{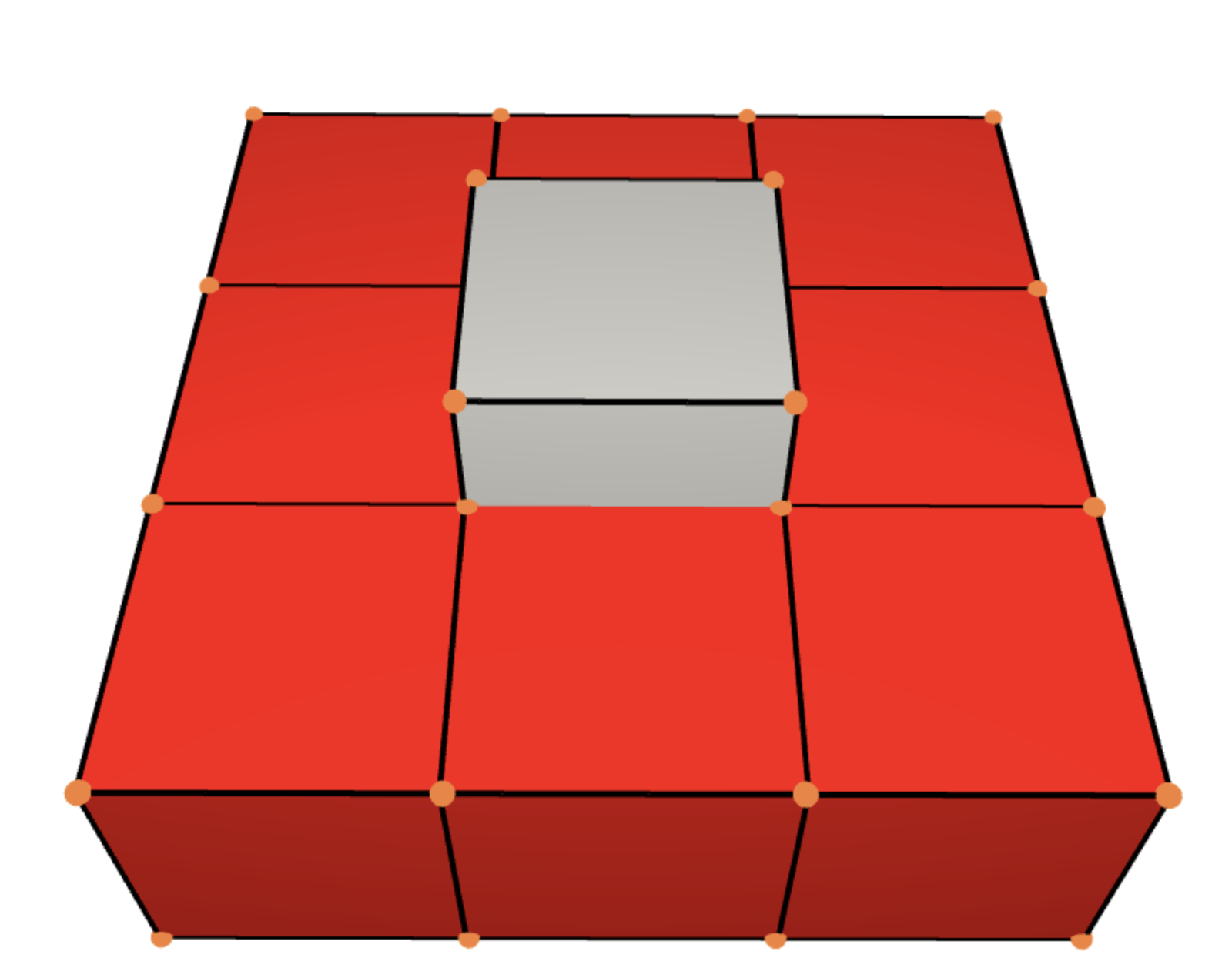}
  \subcaption{}
    \label{fig:SmallCubeAssemblyShift}
\end{minipage}
\caption{(a) Cube with vertex labels, (b) $3\times 3$ assembly of  cubes,  (c) $3\times 3$ assembly of  cubes with middle cube shifted by the translation $(0,0,\frac{1}{2})^\intercal$.}
\label{fig:SmallCubeAssemblyNonInterlocking}
\end{figure}

\begin{example}\label{example:cubes}
We consider a $3\times 3$ assembly of cubes as shown in Figure \ref{fig:CubeAssemblyNonInterlocking}. The outer cubes are fixed and only the inner cube is allowed to move. This cube in the middle can be obtained as the convex hull of the coordinates in the ordered list:
\[
[ \left( 0, 0, 1 \right)^\intercal, \left( 0, 0, 0 \right)^\intercal, \left( 0, 1, 0 \right)^\intercal, \left( 0, 1, 1 \right)^\intercal, \left( 1, 1, 1 \right)^\intercal, \left( 1, 1, 0 \right)^\intercal, \left( 1, 0, 0 \right)^\intercal, \left( 1, 0, 1 \right)^\intercal ]
\]
and the other cubes are obtained by translations of this cube in the directions $(1,0,0)$ and $(0,1,0)$. The contact-faces of the middle cube to its neighbouring cubes are given by the faces
\[
[1,2,3,4],[1,2,7,8],[3,4,5,6],[5,6,7,8],
\]
see Figure \ref{fig:Cube}. The corresponding outer normals of the faces above are given by
\[
\left(-1,0,0\right)^\intercal,\left(0,1,0\right)^\intercal,\left(0,-1,0\right)^\intercal,\left(1,0,0\right)^\intercal.
\]
By fixing a triangulation of the cube, we can use Definition \ref{def:infinitesimal_interlocking} to obtain the infinitesimal interlocking matrix with rows of the form $(-(p\times n)^\intercal,-n^\intercal)$, where $n$ is an outer normal and $p$ is a point belonging to a corresponding face. Note, that we only need four inequalities for each contact square, as a triangulation leads to redundant inequalities (some triangles share points and normals). Thus, we only obtain $16$ instead of $24$ rows and the reduced infinitesimal interlocking matrix is given as follows:
\[
A^\intercal= \begin{bsmallmatrix}
    0 & 0 & 0 & 0 & 1 & 0 & 0 & 1 & 0 & -1 & 0 & -1 & 0 & 0 & 0 & 0 \\
    1 & 0 & 0 & 1 & 0 & 0 & 0 & 0 & 0 & 0 & 0 & 0 & -1 & 0 & 0 & -1 \\
    0 & 0 & -1 & -1 & 0 & 0 & -1 & -1 & 0 & 0 & 1 & 1 & 1 & 1 & 0 & 0 \\
    1 & 1 & 1 & 1 & 0 & 0 & 0 & 0 & 0 & 0 & 0 & 0 & -1 & -1 & -1 & -1 \\
    0 & 0 & 0 & 0 & -1 & -1 & -1 & -1 & 1 & 1 & 1 & 1 & 0 & 0 & 0 & 0 \\
    0 & 0 & 0 & 0 & 0 & 0 & 0 & 0 & 0 & 0 & 0 & 0 & 0 & 0 & 0 & 0 \\
\end{bsmallmatrix}.
\]

The computation \(A \cdot (0,0,0,0,0,1)^\intercal = 0\) implies that an element of the form \((\omega,t)^\intercal = (0,0,0,0,0,1)^\intercal\) lies in the kernel of \(A\). This can be interpreted as shifting a cube outside the assembly along the translation $t=(0,0,1)^\intercal$, see Figure \ref{fig:ModifiedCubeAssemblyShift}. Indeed, we can use the map $\gamma:[0,1]\to \mathrm{SE(3)},t\mapsto \left(\R^3\to \R^3, x\mapsto x + \left(0,0,t \right)^\intercal\right)$ to shift out the cube in the middle, and it holds that $\Dot{\gamma}(0)=(0,0,1)^\intercal$. In general, we observe that for the matrix $A$ to have a trivial kernel, a necessary condition is that there have to be at least three normal vectors of faces $n_1,n_2,n_3$ with $\langle n_1,n_2,n_3 \rangle = \R^3.$
\end{example}

Next, we consider the assembly of stacked cubes as given in Figure \ref{fig:SmallCubeInterlocking}.

\begin{figure}[H]
\centering
\begin{minipage}{.33\textwidth}
  \centering
  \includegraphics[height=3cm]{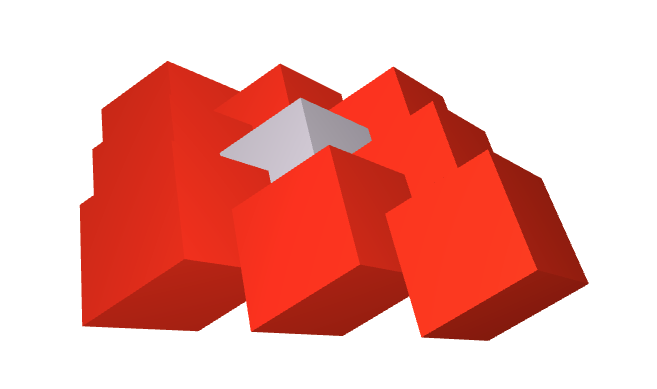}
  \subcaption{}
    \label{fig:SmallCubeAssembly}
\end{minipage}%
\begin{minipage}{.33\textwidth}
  \centering
  \includegraphics[height=3cm]{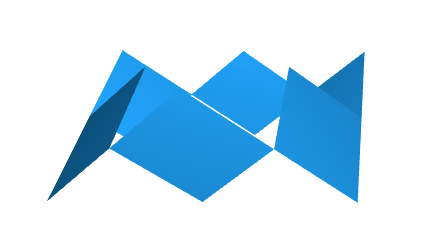}
  \subcaption{}
    \label{fig:SmallCubeAssemblyFaces}
\end{minipage}
\begin{minipage}{.33\textwidth}
  \centering
  \includegraphics[height=3cm]{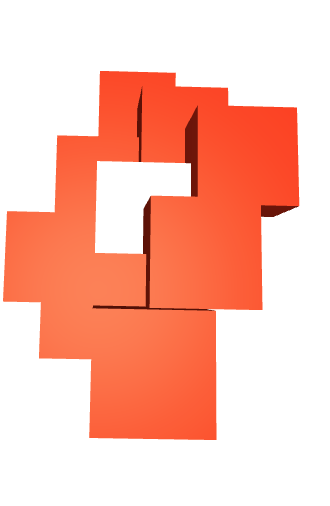}
  \subcaption{}
    \label{fig:SmallCubeAssemblyOriented}
\end{minipage}
\caption{(a) $3\times 3$ Interlocking assembly of  cubes,  (b) contact faces of the middle cube with its neighbours, (c) orientation of cubes with integer coordinates.}
\label{fig:SmallCubeInterlocking}
\end{figure}

\begin{example}\label{example:cube_interlocking}

In order to describe the example shown in Figure \ref{fig:SmallCubeInterlocking} geometrically, we again consider the unit cube given by the points and faces, as in the previous example.

In order to embed the assembly shown in Figure \ref{fig:SmallCubeAssembly} in a way that the coordinates of all cubes are integers, we scale the unit cube by a factor $2$ such that its side length are all equal to $2$. Hence, we get the coordinates
$$
[ \left( 0, 0, 2 \right)^\intercal, \left( 0, 0, 0 \right)^\intercal, \left( 0, 2, 0 \right)^\intercal, \left( 0, 2, 2 \right)^\intercal, \left( 2, 2, 2 \right)^\intercal, \left( 2, 2, 0 \right)^\intercal, \left( 2, 0, 0 \right)^\intercal, 
  \left( 2, 0, 2 \right)^\intercal ].$$
Now, we obtain the assembly shown in Figure \ref{fig:SmallCubeAssemblyOriented} by applying the translations $v_1=(1,1,2)^\intercal,v_2=(-1,2,1)^\intercal$ to the cube above by translating the cube with $i \cdot v_1 + j \cdot v_2$ and $i,j\in \{0,1,2 \}$. The contact faces of the grey cube in the middle are shown in Figure \ref{fig:SmallCubeAssemblyFaces} and the resulting interlocking matrix determined by the contact faces is then given as follows:
\[ 
A^\intercal= \begin{bsmallmatrix}
   -3 & -3 & -4 & -4 & 4 & 4 & 5 & 5 & 0 & 0 & 0 & 0 & 0 & 0 & 0 & 0 & 4 & 4 & 3 & 3 & -5 & -5 & -4 & -4 \\
   0 & 0 & 0 & 0 & 0 & 0 & 0 & 0 & 3 & 3 & 4 & 4 & -4 & -4 & -5 & -5 & 0 & -1 & -1 & 0 & 1 & 2 & 2 & 1 \\
   1 & 2 & 2 & 1 & 0 & -1 & -1 & 0 & -5 & -4 & -4 & -5 & 4 & 3 & 3 & 4 & 0 & 0 & 0 & 0 & 0 & 0 & 0 & 0 \\
   0 & 0 & 0 & 0 & 0 & 0 & 0 & 0 & 1 & 1 & 1 & 1 & -1 & -1 & -1 & -1 & 0 & 0 & 0 & 0 & 0 & 0 & 0 & 0 \\
   1 & 1 & 1 & 1 & -1 & -1 & -1 & -1 & 0 & 0 & 0 & 0 & 0 & 0 & 0 & 0 & 0 & 0 & 0 & 0 & 0 & 0 & 0 & 0 \\
   0 & 0 & 0 & 0 & 0 & 0 & 0 & 0 & 0 & 0 & 0 & 0 & 0 & 0 & 0 & 0 & 1 & 1 & 1 & 1 & -1 & -1 & -1 & -1 \\
\end{bsmallmatrix}.
\]
Using a linear program solver such as the one available in \cite{scipy}, one can compute that the kernel of the matrix is trivial and there is no $0\neq x\in \R^6$ with $Ax\geq 0$.
Alternatively, we can centre the middle cube at the origin to receive the following matrix

\[ 
B^\intercal= \begin{bsmallmatrix}
    1 & 1 & 0 & 0 & 0 & 0 & 1 & 1 & 0 & 0 & 0 & 0 & 0 & 0 & 0 & 0 & 0 & 0 & -1 & -1 & -1 & -1 & 0 & 0 \\
    0 & 0 & 0 & 0 & 0 & 0 & 0 & 0 & -1 & -1 & 0 & 0 & 0 & 0 & -1 & -1 & 1 & 0 & 0 & 1 & 0 & 1 & 1 & 0 \\
    0 & 1 & 1 & 0 & 1 & 0 & 0 & 1 & -1 & 0 & 0 & -1 & 0 & -1 & -1 & 0 & 0 & 0 & 0 & 0 & 0 & 0 & 0 & 0 \\
    0 & 0 & 0 & 0 & 0 & 0 & 0 & 0 & 1 & 1 & 1 & 1 & -1 & -1 & -1 & -1 & 0 & 0 & 0 & 0 & 0 & 0 & 0 & 0 \\
    1 & 1 & 1 & 1 & -1 & -1 & -1 & -1 & 0 & 0 & 0 & 0 & 0 & 0 & 0 & 0 & 0 & 0 & 0 & 0 & 0 & 0 & 0 & 0 \\
    0 & 0 & 0 & 0 & 0 & 0 & 0 & 0 & 0 & 0 & 0 & 0 & 0 & 0 & 0 & 0 & 1 & 1 & 1 & 1 & -1 & -1 & -1 & -1 \\
\end{bsmallmatrix}.
\]

With this matrix, it is straightforward to see that a vector $x^\intercal=(\omega,t)$ with $Bx\geq 0$  has to satisfy $t=0$ and similarly $\omega=0$. Thus, the assembly is infinitesimally interlocked.
\end{example}

In the following example, we see how admissible motions translate into infinitesimal motions.

\begin{figure}[H]
\centering
\begin{minipage}{.3\textwidth}
  \centering
  \includegraphics[height=2.5cm]{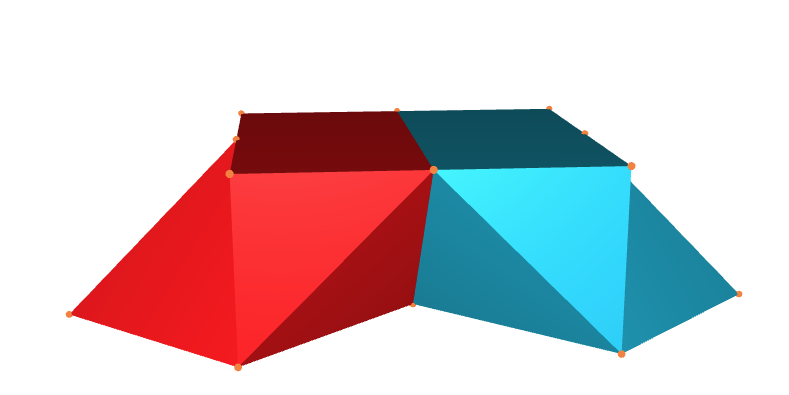}
  \subcaption{}
    \label{fig:TwoVersatileInitial}
\end{minipage}
\begin{minipage}{.3\textwidth}
  \centering
  \includegraphics[height=2.5cm]{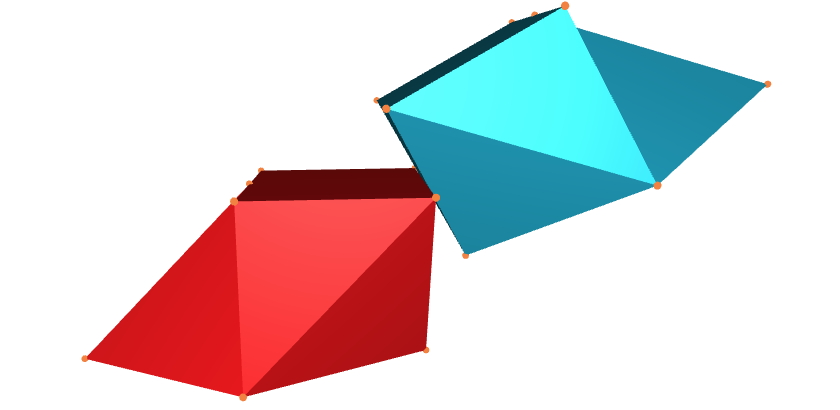}
  \subcaption{}
    \label{fig:TwoVersatileShift1}
\end{minipage}
\begin{minipage}{.3\textwidth}
  \centering
  \includegraphics[height=2.5cm]{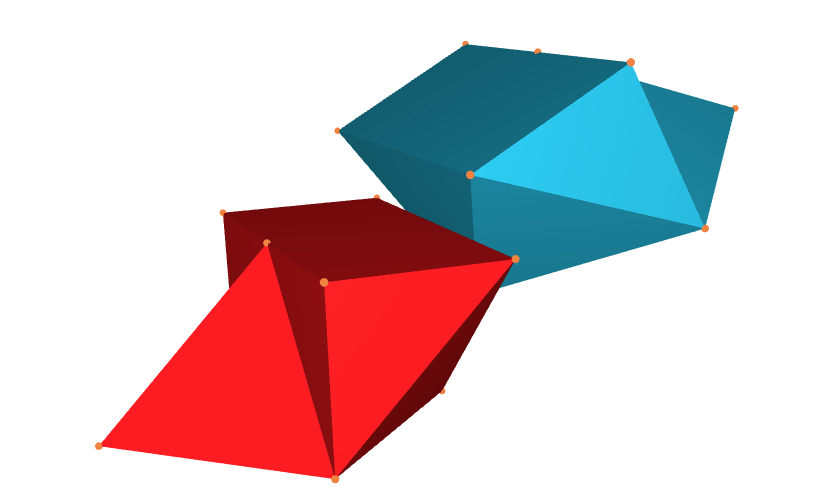}
  \subcaption{}
    \label{fig:TwoVersatileShift2}
\end{minipage}
\caption{Two \emph{Versatile Blocks} and application of an admissible motion: (a)  Initial placement (b,c) views of applying $\gamma(0.5)$ to the right block.}
\label{fig:TwoVersatile}
\end{figure}

\begin{example}\label{example:TwoVersatileBlocks}
    In this example, we consider two copies of a block, called \emph{Versatile Block}, first introduced in \cite{GoertzenFIB} and studied in \cite{GoertzenBridges}. We consider the two blocks shown in Figure \ref{fig:TwoVersatileInitial} with a contact triangle given by the points $\left(0,0,0\right)^\intercal,\left(0,-1,1\right)^\intercal,\left(0,1,1\right)^\intercal.$ The map
\begin{equation*}
\resizebox{\linewidth}{!}{
    $\gamma:[0,1]\to \mathrm{SE}(3),t\mapsto\left(\mathbb{R}^3 \to \mathbb{R}^3, x\mapsto\begin{pmatrix}\cos(t) & 0 & -\sin(t)\\
    0 & 1 & 0\\
    \sin(t) & 0 & \cos(t)
    \end{pmatrix}\cdot\left(x+\begin{pmatrix}0\\
    0\\
    t-1
    \end{pmatrix}\right)+\begin{pmatrix}0\\
    0\\
    1
    \end{pmatrix}\right)$
}
\end{equation*} applied to the right (blue) block in Figure \ref{fig:TwoVersatileInitial} rotates and shifts it along the upper edge of the contact triangle with the left (red) block, and can be also written as follows:
\[
\gamma\left(t\right)\left(x\right)=\begin{pmatrix}\cos\left(t\right) & 0 & -\sin\left(t\right) & -\sin\left(t\right)\cdot\left(t-1\right)\\
0 & 1 & 0 & 0\\
\sin\left(t\right) & 0 & \cos\left(t\right) & \cos\left(t\right)\cdot\left(t-1\right)+1\\
0 & 0 & 0 & 1
\end{pmatrix}\cdot x,
\]
where $x$ lies in the affine space $\mathrm{Aff}(\R^3)$. We see that $\gamma(0)=\mathbb{I}$ is the identity map and differentiating entrywise in $0$ yields:
\[
\dot{\gamma}\left(0\right)=\begin{pmatrix}0 & 0 & -1 & 1\\
0 & 0 & 0 & 0\\
1 & 0 & 0 & 2\\
0 & 0 & 0 & 0
\end{pmatrix}\in \mathfrak{se}(3).
\]
The map $\gamma$ is a continuous motion that applied to the right block in Figure \ref{fig:TwoVersatileInitial} does not lead to any intersections, assuming that the left block in Figure \ref{fig:TwoVersatileInitial} is restrained from moving. However, the points $\gamma(t)\left( \left(0,-1,1\right)^\intercal\right),\gamma(t)\left(\left(0,1,1\right)^\intercal\right)$ do not lie on the right side of the left block for any $t>0$. In order to show that this example is compatible with Definition \ref{def:infinitesimal_interlocking}, we have to show that for the points $p \in \{ \left(0,0,0\right)^\intercal,\left(0,-1,1\right)^\intercal,\left(0,1,1\right)^\intercal\}$ and the normal vector $n=\left(1,0,0\right)^\intercal$ of the contact triangle of the two blocks pointing towards the block on the right, the following inequality is satisfied
$$(p\times n,n)\cdot \dot{\gamma}\left(0\right) \geq 0,$$
where we identify $\dot{\gamma}\left(0\right)$ with $(0,-1,0,1,0,2)$.  Indeed, for $p=\left(0,-1,1\right)$ we have
\[
((p\times n)^\intercal,n^\intercal)\cdot \dot{\gamma}\left(0\right)=(0,1,1,1,0,0)^\intercal \cdot (0,-1,0,1,0,2)=0,
\]
for $p=\left(0,0,0\right)^\intercal$ we get 
\[
((p\times n)^\intercal,n^\intercal)\cdot \dot{\gamma}\left(0\right)=(0,0,0,1,0,0)^\intercal\cdot (0,-1,0,1,0,2)=1
\]
and for $p=\left(0,1,1\right)^\intercal$ we obtain
\[
((p\times n)^\intercal,n^\intercal)\cdot \dot{\gamma}\left(0\right)=(0,1,-1,1,0,0)^\intercal\cdot(0,-1,0,1,0,2)=0.
\]
\end{example}

In order to show that the infinitesimal interlocking property implies the usual interlocking property, we need to define the \emph{positive side} of a plane which depends on a fixed choice of a vector normal to the plane.

\begin{definition}\label{def:positive_side_plane}
    The \emph{positive side} of a plane $P=\left(n,d\right)$ given by a normal vector $n$ and a point $d\in \R^3$ which is contained in $P$ is defined as all points $p$ that are obtained by summing a point of the plane together with its normal multiplied by a non-negative factor, i.e. there exists $a\geq0$ such that $p=a\cdot n+d+v$ with $n\cdot v=0$.
\end{definition}

We can reformulate Definition \ref{def:positive_side_plane} as shown in the following lemma.

\begin{lemma}\label{lemma:plane_side}
    Let $P=\left(n,d\right)$ be a plane given by its normal vector $n$ and a point $d\in \R^3$ which is contained in $P$. Let $p\in\mathbb{R}^{3}$ be a point. Then $p$ is contained on the positive side of $P$ if and only if $\left(p-d\right)\cdot n\geq 0$. 
\end{lemma}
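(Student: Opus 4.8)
The plan is to prove the two implications of the biconditional by directly unwinding the definition of the positive side given in Definition~\ref{def:positive_side_plane}. The key observation is that the decomposition $p = a\cdot n + d + v$ with $n\cdot v = 0$ is essentially an orthogonal decomposition of the vector $p-d$ into a component parallel to $n$ and a component $v$ perpendicular to $n$, and the condition $a\geq 0$ is exactly what the inequality $(p-d)\cdot n \geq 0$ encodes.

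First I would prove the forward direction. Assume $p$ lies on the positive side of $P$, so by definition there exists $a\geq 0$ and a vector $v$ with $n\cdot v = 0$ such that $p = a\cdot n + d + v$. Subtracting $d$ gives $p - d = a\cdot n + v$. Taking the inner product with $n$ yields
\[
(p-d)\cdot n = a\,(n\cdot n) + (v\cdot n) = a\,\lVert n\rVert_2^2,
\]
since $v\cdot n = 0$. As $a\geq 0$ and $\lVert n\rVert_2^2 \geq 0$, the right-hand side is non-negative, so $(p-d)\cdot n\geq 0$, as claimed.

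For the converse, suppose $(p-d)\cdot n\geq 0$. I would exhibit the required decomposition explicitly by orthogonally projecting $p-d$ onto the line spanned by $n$. Set
\[
a = \frac{(p-d)\cdot n}{\lVert n\rVert_2^2}, \qquad v = (p-d) - a\cdot n,
\]
which is well-defined because a normal vector is non-zero, so $\lVert n\rVert_2^2 > 0$. Then $a\geq 0$ by the hypothesis, and a direct computation gives $v\cdot n = (p-d)\cdot n - a\,\lVert n\rVert_2^2 = 0$, so $v$ is orthogonal to $n$. Rearranging the definition of $v$ yields $p = a\cdot n + d + v$ with $a\geq 0$ and $n\cdot v = 0$, which is precisely the condition for $p$ to lie on the positive side of $P$.

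This argument is essentially routine linear algebra, so I do not anticipate a genuine obstacle; the only point that requires a word of care is the implicit assumption that a normal vector is non-zero, which guarantees $\lVert n\rVert_2^2\neq 0$ and hence that the projection coefficient $a$ is well-defined. One could also note that the lemma does not require $n$ to be a unit vector, since the factor $\lVert n\rVert_2^2$ cancels in the sign comparison; if $n$ is assumed normalised, the formulas simplify with $\lVert n\rVert_2^2 = 1$ but the logic is unchanged.
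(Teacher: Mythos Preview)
Your proof is correct and follows essentially the same approach as the paper: both unwind Definition~\ref{def:positive_side_plane} and compute $(p-d)\cdot n = a\lVert n\rVert_2^2$. Your converse is in fact cleaner, as you explicitly construct the decomposition via orthogonal projection, whereas the paper argues the converse by contrapositive (asserting that a point on the other side has $a<0$ and hence $(p-d)\cdot n\leq 0$) without spelling out where the decomposition comes from.
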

\begin{proof}
    The point $p$ is contained on the positive side of $P$ if and only if there exists $a\geq0$ such that $p=a\cdot n+d+v$ with $n\cdot v=0$. It follows that $p\cdot n=a\cdot\lVert n\rVert_{2}^{2}+d\cdot n\geq d\cdot n$ and thus $(p-d)\cdot n\geq0.$ The point $p$ lies on the other side of $P$, if $a<0$ and in this case we can show analogously that $(p-d)\cdot n\leq0.$
\end{proof}

As highlighted in Remark \ref{rem:lie_algebra}, the Lie algebra of the Lie group $\mathrm{SE}(3)$ is given by $\mathfrak{se}(3)$. The $6$-dimensional algebra $(\omega,t)\in \mathfrak{se}(3)$ acts on a point $p\in \R^3$ via $\mathrm{SE}(3)\times \R^3 \to \R^3, \omega \times p + t.$ The main difficulty of relating the infinitesimal definition to the definition of an interlocking assembly is translating the condition \eqref{eq:interlockingCondition} in Definition \ref{def:interlocking} into an infinitesimal version. 

\begin{proposition}\label{proposition:infinitesimal}
    Let $(X_i)_{i\in I}$ be an assembly together with a subset $J\subset I$. If for all finite subsets $\emptyset \neq T\subset I\setminus J$ and infinitesimal motions $x=(\gamma_i)_{i\in I}$, with $\gamma_i \in \R^6$ and $\gamma_i=0$ for all $i\in I\setminus T$ ($x$ has finite support) with $$A_{X,J} \cdot x \geq 0$$ implies that $x=0$, then $(X_i)_{i\in I}$ is an interlocking assembly with frame $J$.
\end{proposition}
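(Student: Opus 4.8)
The plan is to argue by contraposition: I assume $(X_i)_{i\in I}$ is \emph{not} an interlocking assembly with frame $J$ and produce a nonzero finitely-supported $x$ with $A_{X,J}\cdot x\geq 0$, contradicting the hypothesis. By Definition \ref{def:interlocking}, failure of interlocking means there is a finite nonempty $T\subset I\setminus J$ and a non-trivial family of admissible motions $(\gamma_i)_{i\in T}$ (set $\gamma_\ell\equiv\mathbb{I}$ for $\ell\notin T$) such that for \emph{every} $t\in[0,1]$ and all $i,j\in I$ the penetration condition \eqref{eq:interlockingCondition} \emph{fails}, i.e.\ no interiors ever meet. Since each $\gamma_i$ is admissible, I differentiate at $0$ to obtain $\dot\gamma_i(0)=(\omega_i,t_i)\in\mathfrak{se}(3)\cong\R^6$ (Remark \ref{rem:lie_algebra}); collecting these, padded by zeros outside $T$, gives the candidate vector $x=(\gamma_i)_{i\in I}$ of finite support.

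First I would check $x\neq 0$. Because the family $(\gamma_i)_{i\in T}$ is non-trivial, at least one $\gamma_{i_0}$ is not the constant map $\mathbb{I}$; the standing assumption on admissible motions, that a non-trivial admissible motion satisfies $\dot\gamma(0)\neq 0$, then forces $\dot\gamma_{i_0}(0)\neq 0$, hence $x\neq 0$. This is precisely where that hypothesis on admissible motions is needed.

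The core of the argument is to verify $A_{X,J}\cdot x\geq 0$ row by row. Fix a contact triangle $F$ between blocks $X_i,X_j$ with normal $n$ pointing towards $X_j$, and let $p$ lie in the relative interior of $F$. Writing $\gamma_i(t)=(R_i(t),v_i(t))$, the face of $X_i$ containing $F$ moves to the plane $P_i(t)$ with normal $R_i(t)\,n$ through $\gamma_i(t)(p)$, and near $\gamma_i(t)(p)$ the set $\gamma_i(t)(X_i)$ is exactly the negative half-space of $P_i(t)$; this is where I use that $p$ is in the relative interior of the flat triangular face. I define the gap function
\[
g(t)=\bigl(\gamma_j(t)(p)-\gamma_i(t)(p)\bigr)\cdot\bigl(R_i(t)\,n\bigr),
\]
so that, by Lemma \ref{lemma:plane_side}, $g(t)\geq 0$ exactly when $\gamma_j(t)(p)$ lies on the positive side of $P_i(t)$. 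The non-penetration assumption forces $g(t)\geq 0$ for all small $t\geq 0$: were $g(t)<0$, then $\gamma_j(t)(p)$ would lie strictly in the interior of $\gamma_i(t)(X_i)$ while also lying in $\gamma_j(t)(X_j)$, producing a point of $\gamma_i(t)(X_i)\cap\gamma_j(t)(X_j)$ outside $\partial\gamma_i(t)(X_i)$ and hence realising \eqref{eq:interlockingCondition}, contrary to assumption. Since $\gamma_i(0)=\gamma_j(0)=\mathbb{I}$ gives $g(0)=0$, the one-sided bound $g(t)\geq 0=g(0)$ together with differentiability of $g$ at $0$ yields $\dot g(0)\geq 0$. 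A direct product-rule computation, using $R_i(0)=\mathbb{I}$, $\dot R_i(0)=\hat\omega_i$, $\dot v_i(0)=t_i$, gives
\[
\dot g(0)=\bigl((\omega_j,t_j).p-(\omega_i,t_i).p\bigr)\cdot n=\bigl(\omega_j\times p+t_j-\omega_i\times p-t_i\bigr)\cdot n,
\]
which by the identity $(\omega\times p)\cdot n=(p\times n)\cdot\omega$ is exactly the left-hand side of the row of $A_{X,J}$ attached to $(F,p)$, as in \eqref{ineq:infinitesimal_ineq_base} and Definition \ref{def:infinitesimal_interlocking}. When $X_j$ is not moved (either $j\in J$, yielding the genuinely shorter frame row, or $j\in(I\setminus J)\setminus T$, in which case $x_j=0$ annihilates its block), one has $\gamma_j\equiv\mathbb{I}$, so $\dot\gamma_j(0)=0$ and the same computation leaves only the $i$-part $(-(p\times n)^\intercal,-n^\intercal)$. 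Finally, since $\dot g(0)$ is affine in $p$ and nonnegative throughout the relative interior of $F$, continuity extends the bound to the defining vertices of $F$ that index the rows of $A_{X,J}$; thus every row inequality holds and $A_{X,J}\cdot x\geq 0$.

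Combining the two parts, $x$ is a nonzero finitely-supported vector with $A_{X,J}\cdot x\geq 0$, contradicting the hypothesis, so $(X_i)_{i\in I}$ is interlocking with frame $J$. I expect the main obstacle to be the careful passage from the \emph{global} non-penetration statement \eqref{eq:interlockingCondition} to the \emph{local} sign inequality $g(t)\geq 0$: one must justify that near a relative-interior contact point the moved block $\gamma_i(t)(X_i)$ genuinely coincides with the negative half-space of $P_i(t)$, so that a single vertex of $X_j$ dipping to the negative side already certifies an interior intersection, and then promote the relative-interior inequalities to the vertex rows by continuity. The differentiation $\dot g(0)\geq 0$ and the algebraic rewriting are routine once this geometric reduction is secured.
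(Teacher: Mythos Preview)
Your argument is correct and follows essentially the same route as the paper's proof: contrapose, use non-penetration to bound the signed gap $g(t)\geq 0$ near a contact point, differentiate at $0$, and rewrite via the triple-product identity to recover the rows of $A_{X,J}$. The only cosmetic difference is your treatment of the vertices of the contact triangle---you work at relative-interior points and pass to the boundary by noting that $\dot g(0)$ is affine in $p$, whereas the paper instead approximates a troublesome vertex $p$ by the interior points $p-\tfrac{1}{k}\cdot\tfrac{(p'-p)+(p''-p)}{3}$ and lets $k\to\infty$; both devices address the same phenomenon (illustrated in Example~\ref{example:TwoVersatileBlocks}) and yield the same inequality.
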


\begin{proof}
If there is an admissible family of non-trivial motions $(\gamma_i)_{i\in T}$ for a non-empty finite subset of the blocks $T \subset I\setminus J$, we can assume that for at least one $i\in T$, we have that $\dot{\gamma_i}(0) \neq 0$.
Now, let $p$ be a contact point of a contact face with vertices $p,p',p''$ of the blocks $X_{i},X_{j}$ with normal $n_{i}$ pointing towards $X_{j}$. Since the motions $\gamma_i$ for $i\in T$ do not lead to penetrations, it follows that for all admissible motions $\gamma_{i},\gamma_{j}$ of the underlying assembly there either exists $\varepsilon>0$ such that for all $t\in[0,\varepsilon)$ the point $\gamma_j(t)(p)$ is contained in the positive side of the plane $P=\left(\gamma_{i}\left(t\right)\left(n_{i}\right),\gamma_{i}\left(t\right)\left(p\right)\right)$ and  with Lemma \ref{lemma:plane_side} this is equivalent to
\[
\left(\gamma_{j}\left(t\right)\left(p\right)-\gamma_{i}\left(t\right)\left(p\right)\right)\cdot\gamma_{i}\left(t\right)\left(n_{i}\right)\geq0,
\]
or if there is no such $\varepsilon$ (see Example \ref{example:TwoVersatileBlocks}), we instead consider the points $p-\frac{v}{k}$ for $k\in\mathbb{N}$, $v=\frac{\left(p'-p\right)+\left(p''-p\right)}{3}$ and get
\[
\left(\gamma_{j}\left(t\right)\left(p-\frac{v}{k}\right)-\gamma_{i}\left(t\right)\left(p-\frac{v}{k}\right)\right)\cdot\gamma_{i}\left(t\right)\left(n_{i}\right)\geq0,
\]
  for $t\in [0,\varepsilon_k)$ and take the limit $k\to\infty$. It follows that 
\begin{align*}0 & \leq\left(\gamma_{j}\left(t\right)\left(p\right)-\gamma_{i}\left(t\right)\left(p\right)\right)\cdot\gamma_{i}\left(t\right)\left(n_{i}\right)\\
 & =\left(\gamma_{j}\left(t\right)\left(p\right)-p-\left(\gamma_{i}\left(t\right)\left(p\right)-p\right)\right)\cdot\gamma_{i}\left(t\right)\left(n_{i}\right)
\end{align*}
which is equivalent to the following by multiplying with $\frac{1}{t}$ for $t>0$:
\begin{align*}
0 & \leq\frac{1}{t}\left(\gamma_{j}\left(t\right)\left(p\right)-p-\left(\gamma_{i}\left(t\right)\left(p\right)-p\right)\right)\cdot\gamma_{i}\left(t\right)\left(n_{i}\right)\\
 & =\left(\frac{\gamma_{j}\left(t\right)\left(p\right)-p}{t}-\frac{\gamma_{i}\left(t\right)\left(p\right)-p}{t}\right)\cdot\gamma_{i}\left(t\right)\left(n_{i}\right).
\end{align*}
Since $\gamma_{i},\gamma_{j}$ are differentiable and thus continuous we have $${\displaystyle \lim_{t\to0}}\gamma_{i}\left(t\right)\left(n_{i}\right)=n_i$$
and thus it follows that
\[
0\leq\left(\dot{\gamma_{j}}\left(0\right)\left(p\right)-\dot{\gamma_{i}}\left(0\right)\left(p\right)\right)\cdot n_{i}.
\]
For an admissible motion $\gamma$, the derivative at $0$ given by $\dot{\gamma}\left(0\right)$ is an element in the Lie algebra $\mathfrak{se}\left(3\right)$ and can be represented as a $6-$dimensional vector $\left(\omega,t\right)$, see Remark \ref{rem:lie_algebra}. 
If $\dot{\gamma_{j}}\left(0\right)$ corresponds to $\left(\omega_{j},t_{j}\right)$ and $\dot{\gamma_{i}}\left(0\right)$ to $\left(\omega_{i},t_{i}\right)$ we get
\[
0\leq\left(\omega_{j}\times p+t_{j}-\left(\omega_{i}\times p+t_{i}\right)\right)\cdot n_{i}.
\]
For vectors $a,b,c\in \R^3$, we have that 
\[
\left(a\times b\right)\cdot c=\left(b\times c\right)\cdot a,
\]
since the determinant of the matrix $(a,b,c)\in \R^{3\times 3}$  can be expressed as $\left(a\times b\right)\cdot c$ and thus it follows that
\[
\left(a\times b\right)\cdot c=\det\left(a,b,c\right)=\det\left(b,c,a\right)=\left(b\times c\right)\cdot a.
\]
With this we can compute the following:
\begin{align*}0 & \leq\left(\omega_{j}\times p+t_{j}-\left(\omega_{i}\times p+t_{i}\right)\right)\cdot n_{i}\\
 & =\omega_{j}\times p\cdot n_{i}+t_{j}\cdot n_{i}-\omega_{i}\times p\cdot n_{i}-t_{i}\cdot n_{i}\\
 & =p\times n_{i}\cdot\omega_{j}+t_{j}\cdot n_{i}-p\times n_{i}\cdot\omega_{i}-t_{i}\cdot n_{i}\\
 & =-p\times n_{i}\cdot\omega_{i}-n_{i}\cdot t_{i}+p\times n_{i}\cdot\omega_{j}+n_{i}\cdot t_{j}\\
 & =\left(-p\times n_{i},-n_{i},p\times n_{i},n_{i}\right)\cdot\left(\omega_{i},t_{i},\omega_{j},t_{j}\right).
\end{align*}
In total, we conclude that
\[
0\leq\left(-p\times n,-n,p\times n ,n\right)\cdot\left(\omega_{i},t_{i},\omega_{j},t_{j}\right).
\]
It follows that we obtain the infinitesimal interlocking matrix $A_{X,J}$ of the underlying assembly in this way. If we assume that there is no $x\neq 0 $ with finite support such that $A_{X,J}\cdot x \geq 0$, then there is no non-trivial admissible motion $\gamma$ since we enforce that for such motions the corresponding infinitesimal motion given by the derivate of $\gamma$ in $0$ is non-trivial, i.e.\ $\Dot{\gamma}(0)\neq 0$.
\end{proof}

Given this  infinitesimal criterion, the question arises how it can be used to prove the interlocking property given in Definition \ref{def:interlocking}. In general, we proceed as follows with a given interlocking matrix $A$:
    \begin{enumerate}
        \item Show that for any $x$, the existence of a row index $i$ with $(Ax)_i>0$ implies that $\lvert \mathrm{supp}(x) \rvert=\infty$.
        \item Show that the kernel of $A$ is trivial.
    \end{enumerate}
    This proves that the only admissible infinitesimal motion is the zero vector.

In general, we need to consider not only inequalities arising from face-to-face contacts.

\begin{remark}\label{rem:only_face_contacts}
    In Definition \ref{def:infinitesimal_interlocking}, we only consider contacts of face pairs. In Section \ref{sec:proof}, we show that this suffices to prove the interlocking property for certain assemblies. In general, restricting to face pairs only is not sufficient to prove the interlocking property, see \cite{wang_design_2019,stuttgen_modular_2023}. In these situations one needs to model further contact types leading to further inequalities, i.e.
    \begin{itemize}
        \item vertex-vertex contact,
        \item vertex-edge contact,
        \item vertex-face contact,
        \item edge-edge contact,
        \item edge-face contact,
        \item face-face contact.
    \end{itemize}
    Considering these additional contact types increases the complexity of the problem immensely. In certain situations, we can simplify these contact relations using symmetries of the underlying assembly.
\end{remark}

\section{Interlocking Property of Assemblies with Wallpaper Symmetry}\label{sec:proof}

In this section, we establish interlocking properties of assemblies constructed with the methods based on continuously deforming fundamental domains of a given wallpaper group $G$ into each other, as presented in the previous section. From these infinite assemblies, we can also obtain interlocking assemblies with finitely many blocks by only considering finite portions, see Corollary \ref{corollary:finite_interlocking}. For this purpose, we first show that the infinitesimal interlocking property (Definition \ref{def:infinitesimal_interlocking}) of the infinite assemblies construction in \cite{goertzen2024constructing} can be decoded into an infinite dimensional polytope carrying the same symmetries as the underlying assembly. This observation allows a simplification for proving the interlocking property. Next, we show that no ``sliding'' motions are possible for any constructed assemblies satisfying certain conditions.

\begin{definition}
    Let \((X_i)_{i \in I}\) be an assembly of blocks and \(\gamma = (\gamma_i)_{i \in I}\) be admissible motions such that \(\gamma_i \equiv 0\) for almost all \(i \in I\). We say that \(\gamma\) consists of \emph{sliding motions} if the infinitesimal interlocking matrix \(A\) of the assembly \((X_i)_{i \in I}\) satisfies $$ Ax = 0,$$
where \(x = (\dot{\gamma}_i(0))_{i \in I}\).
\end{definition}

In Example \ref{example:cubes} we show that assembling cubes can lead to sliding motions which can be viewed as motions such that contact faces remain in contact when applying the motion. In Proposition \ref{proposition:kernel}, we establish that the assemblies constructed in the previous section do not admit any sliding motion if we assume sufficient edge deformations specified as Criterion \ref{criterion:interlocking}.

Using this, we show that we can classify assemblies with a translational interlocking property and p1 symmetry using the notion of ``infinite interlocking chains''.

\begin{definition}
    Let $(X_i)_{i\in I}$ be an assembly of blocks, $A$ its infinitesimal interlocking matrix and let $\gamma=(\gamma_i)_{i\in I}$ be admissible motions. The infinitesimal motion $x=(\Dot{\gamma_i}(0))_{i\in I}$ is an \emph{infinite interlocking chain} if $Ax\geq 0$ and $x$ has infinite support.
\end{definition}

Consider an assembly coming from the construction in \cite{goertzen2024constructing} by continuously deforming a fundamental domain $F$ of a wallpaper group $G$  into another domain $F'$ of the same group leading to a block $X$ that can be assembled using the extended action of $G$ onto $\R^3$, i.e.\ we consider the infinite assembly of blocks $(X_g)_{g\in G}=(g(X))_{g\in G}$. If there exists a finite set $H\subset G$ such that $H(F)=H(F')$ it follows that we can simultaneously shift the blocks corresponding to $H$ upwards using the admissible motion $\gamma \colon t\mapsto \big( x\mapsto (x_1,x_2,x_3+t) \big)$. This leads to the following interlocking criterion.

\begin{criterion}\label{criterion:interlocking}
    The assembly $(X_g)_{g\in G}$ is a translational interlocking assembly with empty frame if and only if there is no finite set $\emptyset \neq H\subset G$ such that $\bigcup_{h\in H}h(F)=\bigcup_{h\in H}h(F')$. This is equivalent to saying that for each finite set $\emptyset \neq H\subset G$ the edges on the boundary of $\bigcup_{h\in H}h(F)$ are deformed.
\end{criterion}

Indeed, we prove the following for wallpaper groups of type p1 in Theorem \ref{thm:p1_translational}.

\begin{theorem*}
    If $G$ is of type p1, the assemblies $(X_g)_{g\in G}$ are translational interlocked if and only if Criterion \ref{criterion:interlocking} holds.
\end{theorem*}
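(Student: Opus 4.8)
The statement is an ``if and only if'' whose forward direction (interlocking $\Rightarrow$ Criterion) is essentially the contrapositive observation already recorded before the criterion: if some finite $\emptyset\neq H\subset G$ satisfies $\bigcup_{h\in H}h(F)=\bigcup_{h\in H}h(F')$, then the union of the corresponding blocks can be shifted vertically without penetration, so the assembly is \emph{not} translational interlocked. Hence the real content is the reverse direction, and for the p1 case we can exploit that $G\cong\mathbb{Z}^2$ acts by two translations $v_1,v_2$ spanning the lattice. The plan is to argue via the infinitesimal translational criterion (Definition \ref{def:infinitesimal_interlocking}) together with Proposition \ref{proposition:infinitesimal}: it suffices to show that under Criterion \ref{criterion:interlocking}, the only finitely-supported $x=(t_g)_{g\in G}$ with each $t_g\in\R^3$ satisfying $A_{X}\cdot x\geq 0$ is $x=0$.

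First I would set up the vertical structure. By construction the block $X$ interpolates between $F$ in the bottom plane $z=0$ and $F'$ in the top plane $z=1$, and neighbouring blocks meet along the (deformed) triangulated side faces coming from the edges of $F$ and $F'$; the relevant contact faces therefore have normals with a controlled sign in the third coordinate, determined by whether an edge was deformed ``inward'' or ``outward''. Writing $t_g=(t_g^{(1)},t_g^{(2)},t_g^{(3)})$, I would first isolate the third component: the face-to-face inequalities between vertically-stacked or laterally-adjacent blocks force relations of the form $(t_g^{(3)}-t_{g'}^{(3)})$ bounded by sign conditions, and using Proposition \ref{proposition:kernel} (no sliding motions under Criterion \ref{criterion:interlocking}) I would conclude that along any infinite chain of adjacent blocks the vertical components must be monotone, while finite support forces them to be eventually zero, hence identically zero. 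This reduces the problem to purely horizontal infinitesimal translations $t_g=(t_g^{(1)},t_g^{(2)},0)$.

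Next I would handle the horizontal part. Here the key geometric input is that since \emph{every} edge on the boundary of each finite union $\bigcup_{h\in H}h(F)$ is deformed, the side faces are genuinely non-vertical and their normals have horizontal components pointing in enough independent directions around each block; this is the analogue of the ``three normals spanning $\R^3$'' necessary condition noted in Example \ref{example:cubes}. I would argue that for a finitely supported horizontal $x$ satisfying all the inequalities $A_X x\geq 0$, one may look at an ``extremal'' block $g_0$ of the support (e.g.\ maximising the inner product of its position with some generic direction); the contact inequalities with its neighbours outside the support, which have $t=0$, then pin $t_{g_0}$ to lie in a half-space determined by each surrounding contact normal, and the deformation hypothesis guarantees these normals surround $g_0$ so that the only compatible vector is $t_{g_0}=0$. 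Propagating this inward over the finite support yields $x=0$, completing the reverse direction.

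The main obstacle I anticipate is the reduction in the second paragraph: translating the combinatorial statement ``the boundary edges of every finite union are deformed'' into a precise sign/spanning statement about the contact-face normals that appear as rows of $A_X$, and then making the ``extremal block'' argument fully rigorous when contact faces are triangulated pieces of curved-looking (piecewise-linear) deformed side walls. In particular one must check that deforming an edge really does produce a contact face whose normal has the required horizontal component (and not merely a vertical one), so that Criterion \ref{criterion:interlocking} is exactly what rules out a nonzero horizontal translation; handling the degenerate configurations where several contact normals become linearly dependent is where the p1 (pure-translation) hypothesis is essential and where the argument will need the most care.
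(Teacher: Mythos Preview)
Your forward direction agrees with the paper. For the reverse direction the paper uses a different and simpler decomposition than yours: instead of splitting each $t_g$ into vertical and horizontal components, it splits according to whether $Ax=0$ or some row satisfies $(Ax)_k>0$. In the strict case it runs an infinite-chain argument directly on the full vectors: if a block $X_i$ has $t_i\cdot n>0$ for some contact normal $n$, then there is another contact face of $X_i$ with normal $n'$ satisfying $t_i\cdot n'<0$, and the row of $A$ at that face forces the neighbouring block to carry a nonzero translation as well; iterating produces infinitely many moving blocks, contradicting finite support. This reduces everything to $Ax=0$, which is then handled by Proposition~\ref{proposition:kernel}.

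Your coordinate-wise plan is more delicate than it appears. Each row of $A$ is an inequality $(t_{g'}-t_g)\cdot n\ge 0$ with $n$ the normal of a tilted side triangle, so all three components of $t_g$ are coupled and one cannot cleanly ``isolate the third component'' as you propose. More importantly, Proposition~\ref{proposition:kernel} does not yield the monotonicity of vertical components along chains that you invoke; it only gives the implication $Ax=0\Rightarrow x=0$, which is precisely how the paper uses it. Your extremal-block argument for the horizontal part might be salvageable in the p1 setting, but it would re-derive by hand what the chain argument already gives uniformly. A further cost of your route is that Criterion~\ref{criterion:interlocking} must be invoked twice (once to get enough tilted normals for the vertical step, once to get surrounding horizontal normals at the extremal block), whereas in the paper's argument it enters only through Proposition~\ref{proposition:kernel}.
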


\subsection{A Polytope with Wallpaper Symmetry}

Let $X$ be a block coming from the construction in \cite{goertzen2024constructing} which can be assembled using the action of a planar crystallographic group $G$. Hence, there is an assembly of the form $(X_g)_{g\in G}$ with $X=X_{\mathbb{I}}$, where $\mathbb{I}$ is the identity element of $G$. In the construction of $X$ we consider edge representatives $e_1,\dots,e_n \subset \R^2$ of an initial fundamental domain $F$ together with intermediate points $p_1,\dots,p_n \in \R^2$ determining a triangulation of the surface of $X$. For a given edge $e_i$ of $F$, we distinguish between the two cases $p_{i-1}\neq p_i\neq p_{i+1}$ and $p_{i-1}= p_i$ or $p_i= p_{i+1}$.

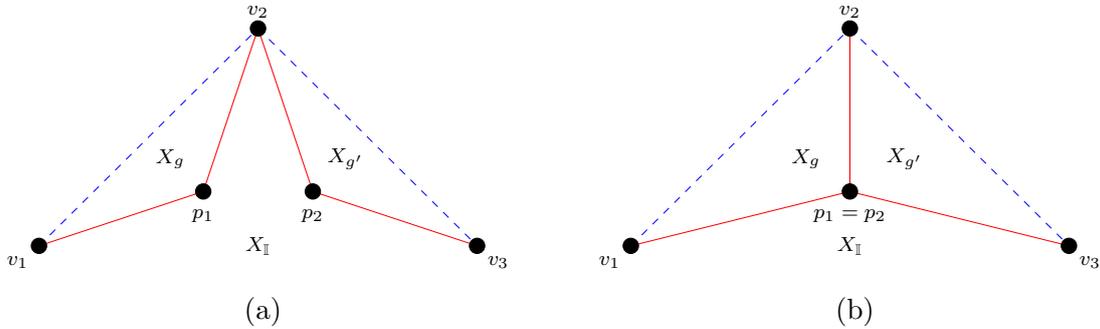
\begin{figure}[H]
    \centering
    \begin{minipage}{.45\textwidth}
        \centering
        \resizebox{7cm}{!}{\begin{tikzpicture}[scale=3]

\coordinate (v1) at (0,0);
\coordinate (v2) at (1,1);
\coordinate (v3) at (2,0);
\coordinate (p1) at (0.75,0.25);
\coordinate (p2) at (1.25,0.25);

\draw[blue,dashed] (v1) -- (v2);
\draw[blue,dashed] (v2) -- (v3);

\draw[red] (v1) -- (p1) -- (v2);
\draw[red] (v2) -- (p2) -- (v3);

\filldraw [black] (v1) circle (1pt) node[anchor=north east] {\scriptsize $v_1$};
\filldraw [black] (v2) circle (1pt) node[anchor=south] {\scriptsize $v_2$};
\filldraw [black] (v3) circle (1pt) node[anchor=north west] {\scriptsize $v_3$};
\filldraw [black] (p1) circle (1pt) node[anchor=north,yshift=-0.1cm] {\scriptsize $p_1$};
\filldraw [black] (p2) circle (1pt) node[anchor=north,yshift=-0.1cm] {\scriptsize $p_2$};

\node at (0.6,0.4) {\scriptsize $X_g$};
\node at (1.4,0.4) {\scriptsize $X_{g'}$};
\node at (1,0) {\scriptsize $X_{\mathbb{I}}$};

\end{tikzpicture}}
        \subcaption{}
        \label{fig:contact_case_1}
    \end{minipage}
    \begin{minipage}{.45\textwidth}
        \centering
        \resizebox{7cm}{!}{\begin{tikzpicture}[scale=3]

\coordinate (v1) at (0,0);
\coordinate (v2) at (1,1);
\coordinate (v3) at (2,0);
\coordinate (p1) at (1,0.25);
\coordinate (p2) at (1,0.25);

\draw[blue,dashed] (v1) -- (v2);
\draw[blue,dashed] (v2) -- (v3);

\draw[red] (v1) -- (p1) -- (v2);
\draw[red] (v2) -- (p2) -- (v3);

\filldraw [black] (v1) circle (1pt) node[anchor=north east] {\scriptsize $v_1$};
\filldraw [black] (v2) circle (1pt) node[anchor=south] {\scriptsize $v_2$};
\filldraw [black] (v3) circle (1pt) node[anchor=north west] {\scriptsize $v_3$};
\filldraw [black] (p1) circle (1pt) node[anchor=north,yshift=-0.1cm] {\scriptsize $p_1=p_2$};

\node at (0.8,0.4) {\scriptsize $X_g$};
\node at (1.25,0.4) {\scriptsize $X_{g'}$};
\node at (1,0) {\scriptsize $X_{\mathbb{I}}$};

\end{tikzpicture}}
        \subcaption{}
        \label{fig:contact_case_2}
    \end{minipage}
    \caption{We can deform two neighbouring edges with vertices $\{v_1,v_2 \}$ and $\{v_2,v_3 \}$ of an initial fundamental domain $F$ by introducing intermediate points $p_1,p_2$. The choices of $p_1,p_2$ determine the contact of the resulting block $X=X_{\mathbb{I}}$ to other blocks $X_g,X_{g'}$ inside the assembly. (a) If $p_1\neq p_2$ then the resulting triangles associated to the edges meet with a single block. (b) If $p_1=p_2$, we obtain a contact triangle between the blocks $X_g,X_{g'}$.}
    \label{fig:enter-label}
\end{figure}

The contact faces of the block $X$ to the neighbouring blocks can be determined by considering each edge independently. For an edge with intermediate point satisfying the first case above (see Figure \ref{fig:contact_case_1}), we obtain rows in the interlocking matrix (Definition \ref{def:infinitesimal_interlocking}) of the following form, since the resulting triangles all are in contact with a single block $X_g$:

\[
\begin{blockarray}{cc}
X & X_g \\
\begin{block}{[cc]}
  * & *  \\
  * & *  \\
  * & *  \\
\end{block}
\end{blockarray}
\]

This means that the three triangles coming from the deformation of the given edge are all in contact with another block. 
For the special case $p_i=p_{i+1}$ (see Figure \ref{fig:contact_case_2}) we have fewer faces and for each edge $e_i$, there are two blocks $X_g,X_{g'}$ that intersect with $X$ at the faces defined at $e_i$ leading to a matrix of the form

\[
\begin{blockarray}{ccc}
X & X_g & X_{g'} \\
\begin{block}{[ccc]}
  * & * & 0 \\
  * & * & 0 \\
  0 & * & * \\
\end{block}
\end{blockarray}.
\]

Each block contact is contained in the matrices of the form above. Since we assumed that the block $X$ is constructed with the techniques given in the previous section, its boundary is given by the triangulation $X_{F,F'}$, where $F,F'$ are two fundamental domains. For each vertical or tiled face $T$ with normal vector $n$ of $X_{F,F'}$  that is in contact with another block in the assembly corresponding to a group element $g\in G$, we can find an edge representative $e\in \{e_1,\dots,e_m \}$ of $F$ such that $T$ is obtained w.l.o.g. by the deformation of $e$ as in Figure \ref{fig:deforming_paths}.

\begin{figure}[H]
    \centering
     \begin{minipage}[b]{.25\textwidth}
        \centering
        \begin{tikzpicture}
            \coordinate (v1) at (0,0);
            \coordinate (v2) at (2,0);
    
            \draw[->] (v1) -- (v2);
    
            \foreach \point in {v1,v2}
                \draw[fill=black] (\point) circle (2pt);
    
            \node[anchor=east] at (v1) {$v_1$};
            \node[anchor=west] at (v2) {$v_2$};
         \end{tikzpicture}
        \subcaption{}
    \end{minipage}
     \begin{minipage}[b]{.25\textwidth}
        \centering
        \begin{tikzpicture}
            \coordinate (v1) at (0,0);
            \coordinate (p) at (1,1);
            \coordinate (v2) at (2,0);
    
            \draw[->] (v1) -- (p) -- (v2);
    
            \foreach \point in {v1,p,v2}
                \draw[fill=black] (\point) circle (2pt);
    
            \node[anchor=east] at (v1) {$v_1$};
            \node[anchor=south] at (p) {$p$};
            \node[anchor=west] at (v2) {$v_2$};
         \end{tikzpicture}
        \subcaption{}
    \end{minipage}
    \begin{minipage}[b]{.4\textwidth}
        \centering
        \tdplotsetmaincoords{110}{20} 
    \begin{tikzpicture}[tdplot_main_coords]
    
      \coordinate (v1) at (0,0,0);
      \coordinate (v2) at (2,0,0);
      \coordinate (p') at (1,1,2);
      \coordinate (v1') at (0,0,2);
      \coordinate (v2') at (2,0,2);

      \draw[fill=orange!30,opacity=0.6] (-1,-1,0) -- (3,-1,0) -- (3,2,0) -- (-1,2,0) -- cycle;

      \draw[fill=blue!20,opacity=0.5] (v1) -- (v2) -- (p') -- cycle; 
      \draw[fill=red!20,opacity=0.5] (v1) -- (v1') -- (p') -- cycle; 
      \draw[fill=green!20,opacity=0.5] (v2) -- (v2') -- (p') -- cycle; 

      \draw[fill=orange!30,opacity=0.6] (-1,-1,2) -- (3,-1,2) -- (3,2,2) -- (-1,2,2) -- cycle;
      
      \foreach \point in {v1,v2,p',v1',v2'}
        \draw[fill=black] (\point) circle (0.5mm);
    
      \node[anchor=east] at (v1) {$v_1$};
      \node[anchor=west] at (v2) {$v_2$};
      \node[anchor=south] at (p') {$p'$};
      \node[anchor=east] at (v1') {$v'_1$};
      \node[anchor=west] at (v2') {$v'_2$};

       \node[anchor=east] at (-1,-1,0) {$P_0$};
       \node[anchor=east] at (-1,-1,2) {$P_c$};
    \end{tikzpicture}
        \subcaption{}
        \label{fig:schematic_triangulation}
    \end{minipage}
    \caption{Deforming an edge and a corresponding triangulation as given in \cite{goertzen2024constructing}: (a) start with an initial edge with vertices $v_1,v_2$, (b) introducing intermediate point $p$ results in two edges with vertices $v_1,p$ and $v_2,p$, (c) interpolating between edges by setting $v_1'=v_1+(0,0,c)^\intercal,v_2'=v_2+(0,0,c)^\intercal,p'=p+(0,0,c)^\intercal$ for some $c>0$. Note that the points $p',v_1',v_2'$  and the points $v_1,v_2,p$ lie in the planes $P_c=\{(x,y,c)\mid x,y\in \R \}$ and $P_0\{(x,y,0)\mid x,y\in \R \}$, respectively.
    }
    \label{fig:deforming_paths}
\end{figure}
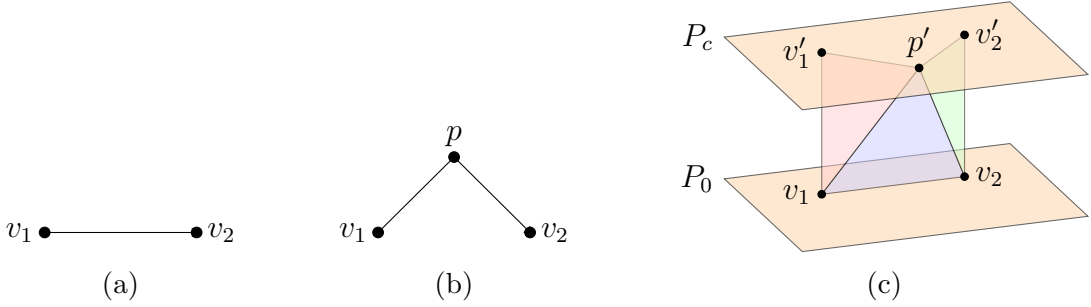

For each vertex $v$ of $T$, we obtain a quadruple $(e,v,n,g)$, which we call an \emph{edge representing quadruple}. In order to put together all contacts of blocks within the assembly, we define the following polytope:
\begin{align*}
\mathrm{IA}_{X,G}= & \{\left(\gamma_{g}\right)_{g\in G}\in\mathbb{R}^{6\times G}\mid\left(e,v,n,g\right)\text{ edge representing quadruple, }\lvert \mathrm{supp}(\left(\gamma_{g}\right)_{g\in G}) \rvert <\infty\\
 & \left(-((R(v)+t)\times R(n))^\intercal,-R(n)^\intercal,\left((R(v)+t)\times R(n)\right)^\intercal,R(n)^\intercal\right)\cdot\left(\gamma_{g'},\gamma_{g'\cdot g}\right)\geq 0,\\ & \forall g'=(R,t)\in G\}.
\end{align*}
Since each contact triangle comes from an edge, we have described all contact faces in the polytope above and thus have the following result.
\begin{proposition}\label{proposition:infinite_polytope}
    The polytope $\mathrm{IA}_{X,G}$ is the interlocking polytope as defined in Definition \ref{def:infinitesimal_interlocking}. 
\end{proposition}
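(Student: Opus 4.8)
The plan is to show that the defining inequalities of $\mathrm{IA}_{X,G}$ are, up to repetition, exactly the rows of the infinitesimal interlocking matrix $A_{X,J}$ of the assembly $(X_g)_{g\in G}$ with empty frame $J=\emptyset$. Once this correspondence is in place, both sets are cut out by the same system of inequalities intersected with the finite-support condition, and the claim is immediate from Definition \ref{def:infinitesimal_interlocking}. The argument is essentially one of unwinding definitions together with a single equivariance observation.

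First I would record the structural fact that drives the correspondence: the assembly is the single $G$-orbit $(g(X))_{g\in G}$, and $G$ acts by rigid motions. Consequently, if a block $X_{g'}$ meets some block $X_h$ in a contact triangle $S$, then applying $g'^{-1}$ shows that $g'^{-1}(S)$ is a contact triangle of the base block $X=X_{\mathbb{I}}$ with $X_{g'^{-1}h}$. Hence every contact triangle of the whole assembly has the form $g'(T)$, where $T$ is a contact triangle of $X$ and $g'\in G$. By the discussion preceding the proposition, the contact triangles of $X$ are precisely those recorded by the edge representing quadruples $(e,v,n,g)$: each boundary triangle of the triangulation $X_{F,F'}$ that touches a neighbour $X_g$ arises from the deformation of a unique edge $e$, and its vertices produce the quadruples. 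Thus the contact triangles of the assembly are parametrised exactly by pairs consisting of an edge representing quadruple $(e,v,n,g)$ and a group element $g'=(R,t)\in G$.

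Next I would unwind how this data transforms under $g'=(R,t)$ acting via $x\mapsto R x + t$. A vertex $v$ of $T$ is sent to the contact point $R(v)+t$, and since $R\in\mathrm{SO}(3)$ is orthogonal the outer normal $n$ is sent to $R(n)$, which still points from $X_{g'}$ towards $g'(X_g)=X_{g'\cdot g}$. Feeding the contact point $p=R(v)+t$, the normal $n'=R(n)$, and the block pair $(g',\,g'\cdot g)$ into the row format of Definition \ref{def:infinitesimal_interlocking} --- block $g'$ receiving $(-p\times n',-n')$ and block $g'\cdot g$ receiving $(p\times n',n')$ --- reproduces exactly the inequality appearing in the definition of $\mathrm{IA}_{X,G}$. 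Conversely each defining inequality of $\mathrm{IA}_{X,G}$ is obtained this way, hence is a row of $A_{X,\emptyset}$. Since the interlocking polytope is unaffected by repeated rows, the two systems define the same set, both further restricted to the finite-support locus $\lvert\mathrm{supp}((\gamma_g)_{g\in G})\rvert<\infty$.

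The main thing to get right is the completeness and equivariance of the contact data rather than any hard estimate: one must verify that $g'(T)$ genuinely exhausts all contact triangles of the infinite assembly (none are missed and none are spurious), that the neighbour index transforms as $g\mapsto g'\cdot g$, and that the normal transforms as $n\mapsto R(n)$ with the orientation ``pointing towards the other block'' preserved. These all follow from $G$ acting by isometries and from the orbit structure of $(X_g)_{g\in G}$; once they are established, the remainder is a direct substitution into the row format of Definition \ref{def:infinitesimal_interlocking}.
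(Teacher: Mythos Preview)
Your proposal is correct and follows essentially the same approach as the paper: both arguments identify the rows of $A_{X,\emptyset}$ with the defining inequalities of $\mathrm{IA}_{X,G}$ via the $G$-equivariance of contact triangles, sending a contact between $X_{g'}$ and $X_{g''}$ to the base contact between $X$ and $X_{g'^{-1}g''}$ recorded by an edge representing quadruple. Your write-up is more explicit about the transformation of points and normals under $g'=(R,t)$ and about orientation, but the underlying idea and structure match the paper's proof.
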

\begin{proof}
    We need to show that the inequalities defining the polytope $\mathrm{IA}_{X,G}$ are exactly those coming from the interlocking matrix $A$ in Definition \ref{def:infinitesimal_interlocking}. Let $p$ be a contact point of a contact face with normal $n$ two blocks $X_{g'},X_{g''}$ coming from a deformed edge $e$. It follows that $g'^{-1}(e) \in \{e_1,\dots,e_n \}$ and we write $g=g'^{-1} \cdot g''$. By the definition of the underlying assembly, we have that $(R(e)+t,R(v)+t,R(n),g)$ is an edge representing quadruple, where $g'^{-1}=(R,t)\in G$. Thus, we obtain all inequalities of $A$ and vice versa, each inequality in the definition of $\mathrm{IA}_{X,G}$ comes from $A$.
\end{proof}

\subsection{The Kernel of the Infinitesimal Interlocking Matrix}

In this section, we show that no sliding motions are possible for the blocks constructed in \cite{goertzen2024constructing} as long as they fulfil Criterion \ref{criterion:interlocking}. For this, let $(X_g)_{g\in G}$ be an assembly constructed with the methods in \cite{goertzen2024constructing} and fulfilling Criterion \ref{criterion:interlocking}. Then we compute its infinitesimal interlocking matrix, which we call $A$.

\begin{lemma}\label{lemma:one_side}
Let $\left(\gamma_{g}\right)_{g\in G}\in\mathbb{R}^{6\times G}$ be a vector (with finite support) of infinitesimal motions for each block. A ``deformed side'' of a block means that for the bottom plane corresponding to the tiling with $F$, there is an edge with corresponding intermediate point deforming, the edge leading to three faces with distinct normal vectors having contact. Assume that one ``deformed side'' of a block $X$ has contact to another block $X'$, which is not moving. Then we can simplify the computation of the kernel to a reduced matrix. 
\end{lemma}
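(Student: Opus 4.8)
The plan is to exploit the special geometry of a deformed side, as depicted in Figure~\ref{fig:deforming_paths}: the three contact triangles produced by one deformed edge all share the single apex $p'$ (the lifted intermediate point), and they form a fan triangulation of the side wall emanating from $p'$. Since $X'$ does not move, its infinitesimal motion is the zero vector, so in the kernel equation $Ax=0$ every row coming from a contact of $X$ with $X'$ reduces to a homogeneous linear equation in the infinitesimal motion $(\omega,t)$ of $X$ alone. First I would single out the three rows of $A$ indexed by the common vertex $p'$ of the three triangles. Writing $n_1,n_2,n_3$ for their contact normals and using the identity $(p'\times n_k)\cdot\omega=(\omega\times p')\cdot n_k$ already invoked in the proof of Proposition~\ref{proposition:infinitesimal}, each such row becomes
\[
n_k\cdot(\omega\times p' + t)=0,\qquad k=1,2,3,
\]
that is, the infinitesimal velocity $\omega\times p'+t$ of the point $p'$ is orthogonal to all three normals.

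The crux is then to show that $n_1,n_2,n_3$ span $\R^3$, for then $\omega\times p'+t=0$. With the coordinates of Figure~\ref{fig:schematic_triangulation} ($v_1,v_2$ in the bottom plane $P_0$, and $v_1',v_2',p'$ in the top plane $P_c$), a direct cross-product computation shows that the two ``lateral'' triangles $(v_1,v_1',p')$ and $(v_2,v_2',p')$ have purely horizontal normals, proportional to the in-plane normals of the projected segments $v_1p$ and $v_2p$, while the ``front'' triangle $(v_1,v_2,p')$ has a vertical component proportional to the signed area of the planar triangle $v_1v_2p$. The hard part will be arguing that this configuration is non-degenerate, and here I would use precisely that the side is \emph{deformed}: the intermediate point $p$ does not lie on the line through $v_1$ and $v_2$. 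This single non-collinearity assumption makes the two horizontal normals linearly independent and makes the vertical component of the front normal nonzero, so the three normals span $\R^3$ and hence $\omega\times p'+t=0$, i.e.\ $t=-\omega\times p'$.

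Finally I would carry out the reduction. The relation $t=-\omega\times p'$ says that the only motion of $X$ compatible with the kernel is an infinitesimal rotation fixing $p'$, cutting the six free parameters of $X$ down to the three components of $\omega$. Substituting $t=-\omega\times p'$ into any other row of $A$ involving $X$ — a row of the form $(-(q\times m)^\intercal,-m^\intercal)$ for a contact vertex $q$ with normal $m$ — and again using the triple-product identity, the row collapses to $((p'-q)\times m)^\intercal\cdot\omega$. Replacing the six columns of $X$ by these three columns for every remaining contact of $X$ yields the promised \emph{reduced matrix}, whose kernel being trivial is now equivalent to the corresponding statement for the original matrix restricted to the motions of $X$. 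I expect the genuine obstacle to be only the non-degeneracy argument for the normals; the algebraic substitution is routine, and the one bookkeeping point worth verifying is that $p'$ really occurs as a defining vertex of all three contact triangles (it does, by the fan triangulation from $p'$), so that the three equations above are genuinely present as rows of $A$.
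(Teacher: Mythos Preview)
Your argument is correct as far as it goes, but it proves a strictly weaker reduction than the one the paper establishes and later needs. You use only the three rows of $A$ indexed by the common apex $p'$; the linear independence of $n_1,n_2,n_3$ then gives $\omega\times p'+t=0$, cutting the six parameters of $X$ to the three components of $\omega$. The paper instead works with \emph{all nine} rows coming from the deformed side (three vertices for each of the three contact triangles) and shows that the resulting $9\times 6$ block $A_e$ has full rank~$6$. That forces $(\omega,t)=0$ outright, which is precisely what the induction in Proposition~\ref{proposition:kernel} consumes: one peels off a moving block adjacent to a stationary one and concludes its motion vanishes.

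The part you flag as ``routine'' is exactly where the paper does the real work. After your substitution $t=-\omega\times p'$, the six remaining rows from the same deformed side (at $v_1,v_2,v_1',v_2'$) become the constraints $((p'-q)\times n_k)\cdot\omega=0$. Showing that three of these are independent---so that $\omega=0$---is the content of the paper's matrices $M_1,M_2$: after normalising $v_1=(0,0)$, $v_2=(1,0)$ via the cofactor identity $(Ma)\times(Mb)=\mathrm{cof}(M)(a\times b)$, one computes
\[
\det M_1=-p_2(p_1-1)(p_1^2+p_2^2+1),\qquad \det M_2=-p_1p_2(p_1^2+p_2^2-2p_1+2),
\]
and observes that at least one is nonzero whenever $p_2\neq 0$. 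This second rank-$3$ condition does \emph{not} follow from the linear independence of the normals alone, so your expectation that ``the genuine obstacle is only the non-degeneracy argument for the normals'' understates the difficulty. Your three-rows-at-$p'$ step is a clean first half; to match the lemma as the paper uses it, you must also carry out (or replace) this second determinant computation on the rows you discarded.
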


\begin{proof}

    Let $e\subset \R^2$ be an edge with vertices $v_1,v_2\in \R^2$, and let $p\in \R^2$ be a point. We embed the vertices $v_1,v_2$ into $\R^3$ by appending a $0$ and let $v_1',v_2'$ be the points which are obtained by appending $1$ instead of $0$ and also embed $p$ into $\R^3$ by appending a $1$. We then consider the three triangles in $\R^3$ given by the points $T_2:=\{v_1,v_1',p \},T_1:=\{v_1,v_2,p \},T_3:=\{v_2,v_2',p \}$ and normal vectors $n_i$ of $T_i$ given by $n_1:=(v_2-v_1)\times (p-v_1),n_2:=(v_1'-v_1)\times (p-v_1), n_3:=(v_2'-v_2)\times (p-v_2)$. We obtain the following $9\times6$ sub-matrix of $A$ of the form corresponding to the contacts of blocks $X$ and $X'$:
\[
A_e^\intercal = 
\begin{pmatrix}
p \times n_{1} & v_{1} \times n_{1} & v_{2} \times n_{1} & p \times n_{2} & v_{1} \times n_{2} & v_{1}' \times n_{2} & p \times n_{3} & v_{2} \times n_{3} & v_{2}' \times n_{3} \\
n_{1} & n_{1} & n_{1} & n_{2} & n_{2} & n_{2} & n_{3} & n_{3} & n_{3}
\end{pmatrix}.
\]
Since we need to show that $A_e$ has rank $6$, we can use row operation on $A_e$ to obtain the following matrix with the same rank
\[
B_{e}^\intercal = \begin{psmallmatrix}
p \times n_{1} & (v_{1}-p) \times n_{1} & (v_{2}-p) \times n_{1} & p \times n_{2} & (v_{1}-p) \times n_{2} & (v_{1}'-p) \times n_{2} & p \times n_{3} & (v_{2}-p) \times n_{3} & (v_{2}'-p) \times n_{3} \\
n_{1} & 0 & 0 & n_{2} & 0 & 0 & n_{3} & 0 & 0
\end{psmallmatrix}.
\]
Now, it suffices to show that $n_1,n_2,n_3$ are linearly independent and one of the following matrices have rank $3$:
\[
\begin{pmatrix}
\left((v_{1}-p)\times n_{1}\right)^\intercal\\
\left((v_{1}-p)\times n_{2}\right)^\intercal\\
\left((v_{2}-p)\times n_{3}\right)^\intercal
\end{pmatrix}\text{ or }\begin{pmatrix}
\left((v_{2}-p)\times n_{1}\right)^\intercal\\
\left((v_{1}-p)\times n_{2}\right)^\intercal\\
\left((v_{2}-p)\times n_{3}\right)^\intercal
\end{pmatrix}.
\]
For a $3\times3$ matrix $M$ with $\det M \neq 0$ it holds that
$$ (Ma)\times (Mb)= \mathrm{cof}\left(M\right)\cdot (a\times b),$$
where $\mathrm{cof}\left(M\right)=\det \left(M\right) \cdot (M^{-1})^\intercal$ is the \emph{cofactor matrix} of $M$. Hence, we can assume that $v_1=(0,0)^\intercal,v_2=(1,0)^\intercal$ and the matrices simplify to

\begin{align*}
M_{1}= & \begin{pmatrix}-p_{2}^{2}-1 & p_{1} & p_{1}\\
-\left(1-p_{1}\right)p_{2} & p_{2} & p_{2}\\
p_{1}-1 & -p_{1}^{2}-p_{2}^{2} & -\left(1-p_{1}\right)^{2}-p_{2}^{2}
\end{pmatrix},\\
M_{2}= & \begin{pmatrix}-p_{2}^{2}-1 & p_{1} & p_{1}-1\\
p_{1}p_{2} & p_{2} & p_{2}\\
p_{1} & -p_{1}^{2}-p_{2}^{2} & -\left(1-p_{1}\right)^{2}-p_{2}^{2}
\end{pmatrix}.
\end{align*}
Then we obtain the following determinants of the matrices dependent on the values of $p$:
$$\det M_1=-p_2 (p_1-1)(p_1^2+p_2^2+1) , \det M_2= -p_1 p_2 (p_1^2+p_2^2-2 p_1 +2).$$ Since $p$ is not contained on the line $v_1,v_2$, we can assume that $p_2\neq 0$ and if $p_1=1$ it follows that $M_2$ has non-zero determinant and if $p_1\neq 1$ it follows that $M_1$ has non-zero determinant.
\end{proof}

\begin{proposition}\label{proposition:kernel}
    The infinitesimal interlocking matrix $A$ has trivial kernel if and only if Criterion \ref{criterion:interlocking} holds.
\end{proposition}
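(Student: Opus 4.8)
The plan is to establish both directions of the equivalence, using the structural results already assembled.

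For the direction asserting that Criterion \ref{criterion:interlocking} fails implies nontrivial kernel: if there is a finite nonempty set $H \subset G$ with $\bigcup_{h\in H} h(F) = \bigcup_{h\in H} h(F')$, then the collection of blocks indexed by $H$ can be translated upward as a rigid unit. I would set $\gamma_h \colon t \mapsto (x \mapsto x + (0,0,t)^\intercal)$ for $h \in H$ and $\gamma_g \equiv \mathbb{I}$ otherwise, so that $x = (\dot\gamma_g(0))_{g\in G}$ has $\dot\gamma_h(0) = (0,0,0,0,0,1)$ on $H$ and $0$ elsewhere. The key geometric observation is that the equality of the two unions $\bigcup_{h\in H}h(F)=\bigcup_{h\in H}h(F')$ means precisely that no edge on the boundary of $\bigcup_{h\in H}h(F)$ is deformed, so every contact face of a block $X_h$ ($h\in H$) with a block outside $H$ sits in a vertical (undeformed) plane whose normal $n$ is horizontal. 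For such a face the infinitesimal inequality row evaluated on the common upward translation gives $n^\intercal \cdot (0,0,1)^\intercal = 0$, and faces between two blocks both inside $H$ contribute rows that cancel pairwise (identical upward translation on both sides). Hence $Ax = 0$ with $x \neq 0$ of finite support, so the kernel is nontrivial.

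For the converse, I would argue the contrapositive: assuming Criterion \ref{criterion:interlocking} holds, I want to show $\ker A = 0$. Take $x = (\gamma_g)_{g\in G}$ of finite support with $Ax = 0$. The strategy is to show that the support must be empty. Because $Ax=0$ forces in particular $Ax \geq 0$, every face-to-face contact inequality holds with equality, which is a very rigid constraint. Criterion \ref{criterion:interlocking} guarantees that for the finite set $H = \mathrm{supp}(x)$, the boundary of $\bigcup_{h\in H}h(F)$ contains at least one deformed edge; such an edge gives a ``deformed side'' of some block $X_h$ (with $h$ in the support) in contact with a block $X'$ lying outside the support, i.e.\ with $\gamma_{X'} = 0$. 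Lemma \ref{lemma:one_side} then applies to this deformed side: the corresponding $6\times 6$ sub-block $A_e$ has full rank $6$, forcing $\gamma_h = 0$, contradicting $h \in \mathrm{supp}(x)$ unless the support is empty. Thus $x = 0$.

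The main obstacle I anticipate is the bookkeeping in the forward direction, namely verifying carefully that every contact face of a boundary block of $\bigcup_{h\in H}h(F)$ with the exterior is genuinely vertical with horizontal normal (so that the upward translation annihilates its inequality row), and that interior faces between two support blocks contribute cancelling pairs. This requires invoking the explicit structure of the triangulation $X_{F,F'}$ from Figure \ref{fig:deforming_paths}: the bottom tessellation by $F$ and top tessellation by $F'$ agree on $H$, so the interpolating block shares undeformed vertical side-faces with its exterior neighbours. The converse is cleaner once one frames it correctly, as it essentially reduces to applying Lemma \ref{lemma:one_side} at a single deformed edge guaranteed by the Criterion; the subtlety there is only to select an edge whose far block lies outside the support, which is exactly what the boundary-deformation reformulation of Criterion \ref{criterion:interlocking} provides.
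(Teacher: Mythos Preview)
Your proposal is correct and follows essentially the same route as the paper: the upward translation on a finite $H$ with $\bigcup_{h\in H}h(F)=\bigcup_{h\in H}h(F')$ gives the nontrivial kernel element for one direction, and for the converse one locates a deformed boundary edge of the support touching a fixed block and invokes Lemma~\ref{lemma:one_side} to kill the corresponding $\gamma_h$. The only cosmetic difference is that the paper phrases the converse as an induction on $\lvert\mathrm{supp}(x)\rvert$ (peeling off one block at a time), whereas you obtain an immediate contradiction by applying the Criterion directly to $H=\mathrm{supp}(x)$; your framing is in fact slightly cleaner, though note that $A_e$ is $9\times 6$ of rank $6$, not a $6\times 6$ block.
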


\begin{proof}
    In the discussion preceding Criterion \ref{criterion:interlocking}, it is shown that we can find a non-trivial admissible motion shifting out blocks if there is a finite set $\empty \neq H \subset G$ with $H(F)=H(F')$. Moreover, this motion correspond to the translation $(0,0,1)$ and indeed gives rise to a sliding motion.

    Assuming the opposite, we show that there are no sliding motions, i.e.\ the matrix $A$ has trivial kernel.
    Let $x$ with finite support and $Ax=0$, we identify $x$ with the family of infinitesimal motions $(\gamma_g)_{g\in G}$ such that $\gamma_g=0$ for almost all $g\in G$. We show by induction that $x=0$. For this, we need to prove that there is always an edge in the sense of Lemma \ref{lemma:one_side}. But since there is always at least one block having contact to a block which is moved by $x$, such an edge exists. For each non-open edge $e$, there is at least one block with points ``above'' $e$. Since the number of blocks that are considered is finite, we find an open edge $e$, since at least one edge admits deformations.
\end{proof}

\subsection{Interlocking Chains}

From the infinite polytope formulation, we obtain a matrix $A$ such that vectors $x$ with finite support and $Ax \geq 0$ correspond to admissible infinitesimal motions. We show that, in the case where $G$ is of type p1 and only translations are considered, the inequality $(Ax)_i > 0$ for some row index $i$ implies that $x$ has infinite support. Therefore, it suffices to consider $x$ with $Ax = 0$. This can be demonstrated by proving the existence of an infinite intersection chain.

\begin{theorem}\label{thm:p1_translational}
        Any assembly $(X_g)_{g\in G}$ coming from the construction of the previous section with $G$ of type p1 is translational interlocked if and only if Criterion \ref{criterion:interlocking} is fulfilled.
\end{theorem}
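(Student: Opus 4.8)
The plan is to prove both implications by passing to the infinitesimal translational interlocking matrix $A$ and combining a telescoping summation with the kernel analysis of Proposition~\ref{proposition:kernel}. Restricting to translations means every infinitesimal motion is $\gamma_g=(0,t_g)$ with $t_g\in\R^3$, and, as recorded in Definition~\ref{def:infinitesimal_interlocking}, each face-to-face contact contributes a single row: if $X_g$ and $X_{g'}$ share a face with normal $n$ pointing from $X_g$ towards $X_{g'}$, the corresponding entry of $Ax\geq 0$ reads $n\cdot(t_{g'}-t_g)\geq 0$.

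For the direction ``Criterion~\ref{criterion:interlocking} holds $\Rightarrow$ interlocked'', by Proposition~\ref{proposition:infinitesimal} it suffices to show that the only finite-support $x=(t_g)_{g\in G}$ with $Ax\geq 0$ is $x=0$. The key observation is that for a p1 group $G\cong\mathbb{Z}^2$ the contact normals are lattice-invariant: since $G$ consists only of translations, the face between $X_g$ and $X_{gh}$ is the $g$-translate of the face between $X_{\mathbb I}$ and $X_h$, hence carries the \emph{same} outward normal $n_h$ for every $g\in G$. I would therefore fix a neighbour offset $h$ (one of the finitely many $h\in G$ with $X_{\mathbb I}\cap X_h$ a face) and sum the associated inequalities over all $g$:
\[
0\leq \sum_{g\in G} n_h\cdot\big(t_{gh}-t_g\big)=n_h\cdot\Big(\sum_{g\in G}t_{gh}-\sum_{g\in G}t_g\Big)=n_h\cdot(S-S)=0,
\]
where $S=\sum_{g\in G}t_g$ is a \emph{finite} sum by finite support and the reindexing $g\mapsto gh$ is a bijection of $G$. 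A sum of non-negative terms that vanishes forces each term to vanish, so $n_h\cdot(t_{gh}-t_g)=0$ for all $g$ and all neighbour offsets $h$; that is, $Ax=0$. This is precisely the claim that $(Ax)_i>0$ cannot occur for finite-support $x$, the ``infinite interlocking chain'' phenomenon. Applying Proposition~\ref{proposition:kernel} together with Criterion~\ref{criterion:interlocking}, the kernel of $A$ is trivial, whence $x=0$.

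For the converse I would argue contrapositively: if Criterion~\ref{criterion:interlocking} fails there is a finite set $\emptyset\neq H\subset G$ with $\bigcup_{h\in H}h(F)=\bigcup_{h\in H}h(F')$, so the outer boundary wall of the cluster $\{X_h\}_{h\in H}$ is vertical (its bottom and top footprints coincide). The straight-up translation $\gamma\colon t\mapsto\big(x\mapsto(x_1,x_2,x_3+t)\big)$ applied simultaneously to exactly the blocks in $H$ is then a non-trivial admissible translational motion causing no penetration, as already observed in the discussion preceding the criterion; hence the assembly is not translational interlocked. Equivalently, this $\gamma$ exhibits the non-trivial kernel element of $A$ supplied by Proposition~\ref{proposition:kernel}.

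The conceptual heart of the argument, and the reason the statement is restricted to type p1, is the lattice-invariance of the contact normals that makes the telescoping sum collapse; for wallpaper groups containing rotations or reflections the normals $n_h$ would depend on $g$ and this cancellation would fail, so the genuinely delicate point is to pin down that p1 is exactly the symmetry class for which the clean summation is available. The remaining steps are bookkeeping: confirming that the translational rows of $A$ have the stated one-row-per-face form, and that Proposition~\ref{proposition:infinitesimal} upgrades infinitesimal translational interlocking (using face contacts only, which for proving interlocking is a safe restriction since additional contact types only add constraints) to the honest translational interlocking property of Definition~\ref{def:interlocking}.
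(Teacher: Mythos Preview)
Your argument is correct and the overall architecture matches the paper's: reduce to showing that no finite-support $x$ satisfies $Ax\geq 0$ with a strict entry, then invoke Proposition~\ref{proposition:kernel} for the kernel. The genuine difference is in how you exclude strict inequalities. The paper argues by propagation: a strict entry at some block $X_i$ forces a neighbouring block to move, which forces another, producing an infinite chain and contradicting finite support. You instead exploit that for type~p1 the contact normals are literally translation-invariant, so for each face type the inequalities $n_h\cdot(t_{gh}-t_g)\geq 0$ indexed by $g\in G$ telescope to zero when summed, forcing each to vanish. Your route is shorter and makes the role of the p1 hypothesis completely transparent (the sum collapses precisely because $g\mapsto gh$ is a bijection and $n_h$ is $g$-independent), whereas the paper's chain argument is more geometric but, as written, leaves implicit why the chain cannot close up into a finite cycle. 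One small point worth making explicit in your write-up: Proposition~\ref{proposition:kernel} is stated for the full infinitesimal matrix, but a purely translational kernel element $(t_g)$ lifts to $(0,t_g)$ in the kernel of the full matrix (the $p\times n$ terms drop out), so the citation is legitimate.
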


\begin{proof}
    If there exists a set $H$ as in Criterion \ref{criterion:interlocking}, we can simply shift out the corresponding blocks using upward translations. Proposition \ref{proposition:kernel} implies that there are no non-trivial motions inside the kernel of the interlocking matrix. Thus, it suffices to rule out motions $\gamma$ such that there exists $k$ with $(A\cdot \gamma)_k>0$. If there exists such an entry, it follows that there exists a block $X_i$ with non-trivial motion $t_i$ and a normal $n$ of a face with $t_i \cdot n >0$. Then for any face of the block $X_i$ with normal $n'$ such that $t_i\cdot n'<0$ implies that the neighbouring block at this face has to move as-well. This automatically leads to an infinite chain.
\end{proof}

\begin{corollary}\label{corollary:finite_interlocking}
Let $\left(X_{i}\right)_{i\in I}$ be an infinite interlocking assembly and $J\subset I$ a finite subset in $I$, such that the elements in $J$ can be written as a tuple $\left(j_{1},\dots,j_{n}\right)$ such that the blocks $X_{j_{l}},X_{j_{l+1}}$ for $l=1,\dots,n-1$ and the blocks $X_{j_1},X_{j_n}$ share a common face. Then there is a finite subset $J\subset\tilde{I}\subset I$ such that $\left(\left(X_{i}\right)_{i\in\tilde{I}},J\right)$is a topological interlocking assembly. Moreover, this subset can be chosen to be the blocks inside the region spanned by the blocks corresponding to the elements in $J$.
\end{corollary}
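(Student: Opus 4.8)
The plan is to argue by contradiction and to transport any hypothetical escape motion of the finite sub-assembly into the ambient infinite assembly, where its interlocking property rules such motions out. I would take $\tilde I$ to be the finite set of all blocks lying in the planar region enclosed by the cycle $J=(j_1,\dots,j_n)$, so that $J$ forms a complete boundary layer and $\tilde I\setminus J$ consists of the strictly interior blocks. Suppose $((X_i)_{i\in\tilde I},J)$ were not interlocking. Then there is a finite non-empty $T\subset\tilde I\setminus J$ and a non-trivial admissible family $(\gamma_i)_{i\in T}$ for which the penetration condition \eqref{eq:interlockingCondition} fails for all $t\in[0,1]$ and all pairs of indices in $\tilde I$; that is, the blocks of $\tilde I$ never overlap along this motion. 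Extending the motion to all of $I$ by $\gamma_i\equiv\mathbb{I}$ for $i\notin T$ produces a non-trivial admissible family on $I$, and since $(X_i)_{i\in I}$ is interlocking and $T$ is a finite non-empty set of non-frame blocks, there is a first time $t^*\in[0,1]$ at which \eqref{eq:interlockingCondition} holds for some pair $X_{i^*},X_{j^*}$.

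The decisive step is to show that this first collision already takes place inside $\tilde I$. Here I would use that the construction of \cite{goertzen2024constructing} tiles the slab between the two bounding planes without gaps: the region left free by all the fixed blocks $\{X_i\mid i\in I\setminus T\}$ is precisely the union of the cells originally occupied by the moving blocks in $T$. Consequently the moving blocks always remain inside this fixed cavity, and a moving interior can meet a fixed interior only by reaching a fixed block that bounds the cavity, i.e.\ one that shares a face with some block of $T$. By continuity of the maps $\gamma_i$ and finiteness of $T$, the fixed block $X_{j^*}$ first penetrated is therefore face-adjacent to $T$. Since $T\subset\tilde I\setminus J$ is strictly interior and the cycle $J$ separates the enclosed region from its exterior, no interior block shares a face with an exterior block, so every block adjacent to $T$ lies in $\tilde I$; in particular $j^*\in\tilde I$, while $i^*\in\tilde I$ holds automatically as $i^*\in T$. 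Thus \eqref{eq:interlockingCondition} is satisfied for a pair of blocks of $\tilde I$, contradicting the assumption. The ``moreover'' assertion is then immediate, as $\tilde I$ was by definition the set of blocks spanned by $J$.

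I expect the main obstacle to be making the geometric confinement argument rigorous, namely that the first fixed block met by a moving interior block is necessarily face-adjacent to $T$ and hence lies in $\tilde I$. This rests on two points that must be justified with care: the gap-free tiling property of the blocks $(X_g)_{g\in G}$, which identifies the free region with the union of the moving cells, and the enclosure property of the cycle $J$, which forbids a strictly interior block from sharing a face with an exterior one. Pinning down the reduction to a \emph{first} penetration time—for example through a compactness argument tracking the earliest $t$ at which the moving interior of some $X_{i^*}$ acquires nonempty intersection with a fixed interior—is the technical heart of the proof, whereas the extension of the motion to $I$ and the appeal to the infinite interlocking property are routine.
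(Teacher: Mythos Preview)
The paper states this corollary without proof, so there is nothing to compare against directly. Your strategy—extend a hypothetical escape motion by the identity to all of $I$ and invoke the interlocking property of the ambient assembly—is the natural one, and you are right that the confinement step is the heart of the matter.

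The gap is more serious than you indicate, however. You claim that the region left free by the fixed blocks is ``precisely the union of the cells originally occupied by the moving blocks in $T$'', but this holds only inside the slab between the two bounding planes $P_0$ and $P_c$; the half-spaces above and below the slab are entirely free. Nothing in your cavity picture prevents a moving interior block from exiting the slab vertically, travelling laterally above it, and descending onto a block of $I\setminus\tilde I$, so the conclusion that the first fixed block met is face-adjacent to $T$ does not follow. (A minor point in addition: the set of times at which \eqref{eq:interlockingCondition} holds is open, so a ``first penetration time'' need not exist; one should argue with the supremum of the non-penetration times instead.) Closing this gap at the level of continuous motions essentially amounts to re-proving a local version of the interlocking property.

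A cleaner route, and the one consistent with how the paper actually verifies interlocking for its infinite assemblies, is to pass to the infinitesimal matrix. The assemblies of Section~\ref{sec:proof} are shown to be \emph{infinitesimally} interlocked with empty frame, and the rows of the infinitesimal matrix encode only face-to-face contacts. Given $x$ supported on $\tilde I\setminus J$ satisfying the finite system of inequalities, extend by zeros to $\bar x$ on $I$. Every row of the infinite matrix that involves some block of $\tilde I\setminus J$ is either a contact with another block of $\tilde I$—hence already a row of the finite matrix, with the same value on $\bar x$—or a contact with a block of $I\setminus\tilde I$, which the enclosure property of the cycle $J$ rules out, since no interior block shares a face with an exterior one. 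All remaining rows involve only blocks on which $\bar x$ vanishes. Thus the infinite system is satisfied, whence $\bar x=0$, and Proposition~\ref{proposition:infinitesimal} yields the interlocking property of $\bigl((X_i)_{i\in\tilde I},J\bigr)$. This uses your enclosure observation in exactly the same way, but because infinitesimal contacts are purely local the slab-escape problem never arises.
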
 

\subsection{Interlocking Assemblies with RhomBlocks}

In this section, we prove that any infinite tiling with lozenges leads to two interlocking assemblies with RhomBlocks. In \cite{goertzen2024constructing}, a triangulation of the surface of the RhomBlock is given with coordinates
\[
\begin{array}{cccccc}
v_{1} & v_{2} & v_{3} & v_{4} & v_{5} \\
\left(0,0,0\right)^\intercal & \left(\frac{1}{2},\frac{\sqrt{3}}{2},0\right)^\intercal & \left(1,0,0\right)^\intercal & \left(\frac{1}{2},-\frac{\sqrt{3}}{2},0\right)^\intercal & \left(0,0,\sqrt{\frac{2}{3}}\right)^\intercal \\
v_{6} & v_{7} & v_{8} & v_{9} & v_{10}\\
\left(\frac{1}{2},\frac{\sqrt{3}}{6},\sqrt{\frac{2}{3}}\right)^\intercal & \left(1,0,\sqrt{\frac{2}{3}}\right)^\intercal & \left(1,-\frac{\sqrt{3}}{3},\sqrt{\frac{2}{3}}\right)^\intercal & \left(\frac{1}{2},-\frac{\sqrt{3}}{2},\sqrt{\frac{2}{3}}\right)^\intercal & \left(0,-\frac{\sqrt{3}}{3},\sqrt{\frac{2}{3}}\right)^\intercal
\end{array}
\]
and corresponding vertices of faces:
\begin{align*}
    &[[ 1, 2, 3 ], [ 1, 3, 4 ], [ 1, 5, 6 ], [ 1, 2, 6 ], [ 2, 3, 6 ], [ 3, 6, 7 ], [ 3, 7, 8 ], [ 3, 4, 8 ], [ 4, 8, 9 ],  [ 4, 9, 10 ],\\&\phantom{[}  [ 1, 4, 10 ], [ 1, 5, 10 ], [ 5, 6, 7 ], [ 5, 9, 10 ], [ 7, 8, 9 ], [ 5, 7, 9 ]].\notag
\end{align*}

As shown in \cite{goertzen2024constructing}, we obtain for each tesselations with lozenges to assemblies with RhomBlocks. The simple geometry can be exploited to obtain the following result, where we use the fact that the vertical walls of the RhomBlock come in parallel pairs.

\begin{theorem}
    For a given tessellation of the plane with lozenges, we can associate two infinite interlocking RhomBlock assembly. 
\end{theorem}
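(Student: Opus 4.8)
The plan is to verify the infinitesimal criterion and invoke Proposition \ref{proposition:infinitesimal}: with empty frame $J=\emptyset$ it suffices to show that $A_{X,G}\cdot x\ge 0$ with $x=(\omega_g,t_g)_g$ of finite support forces $x=0$, where $A_{X,G}$ is the interlocking matrix assembled as in Proposition \ref{proposition:infinite_polytope}. First I would record the contact combinatorics of a lozenge tessellation: each lozenge carries one RhomBlock, two lozenges sharing an edge give two blocks meeting in a common slanted wall, and the bottom faces $[1,2,3],[1,3,4]$ and the top faces all lie in the bounding planes $P_0$ and $P_c$ and are therefore free, so every contact is across a slanted side wall. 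The two asserted assemblies arise from the two admissible realisations in the construction (the RhomBlock and its mirror image between $P_0$ and $P_c$); since the argument below is symmetric in the two cases, I would fix one and argue for it.

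The core mechanism is a de Bruijn--line (ribbon) argument exploiting the stated fact that the vertical walls come in parallel pairs: the two walls of a block sitting over a pair of opposite, hence parallel, edges of its lozenge have antiparallel outward normals $n$ and $-n$. Consequently, for a fixed wall-normal direction $n$ the blocks organise into a bi-infinite ribbon $\dots,g_{-1},g_0,g_1,\dots$ in which consecutive blocks share parallel walls whose outward normal is $n$ in the direction of increasing index. Working first with the translational part (set $\omega_g=0$), the single reduced inequality across each wall of the ribbon reads $(t_{g_{k+1}}-t_{g_k})\cdot n\ge 0$, so $f(g):=t_g\cdot n$ is non-decreasing along the ribbon. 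As $x$ has finite support, $f$ vanishes at both ends, and a non-decreasing sequence that is zero at $\pm\infty$ is identically zero; hence $t_g\cdot n=0$ for every block of the ribbon. Running this over all three edge-directions of the rhombille pattern promotes \emph{every} translational wall inequality to an equality, i.e.\ the translational part of $x$ lies in $\ker A$.

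To close the translational analysis I would invoke the kernel computation. The RhomBlock genuinely deforms all four edges of its lozenge into slanted walls, so for every finite set $H$ of lozenges the boundary edges of $\bigcup_{h\in H}h(F)$ are deformed; thus Criterion \ref{criterion:interlocking} holds. Its local input is Lemma \ref{lemma:one_side}, which is a blockwise rank statement and therefore insensitive to global periodicity, so the induction of Proposition \ref{proposition:kernel} applies verbatim to an arbitrary (possibly aperiodic) lozenge assembly and gives $\ker A=0$. Combined with the previous paragraph this forces $t_g=0$ for all $g$, establishing at least the translational interlocking property.

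The remaining and hardest point is the rotational part $\omega_g$, which the full statement requires. Here the clean telescoping breaks, because each wall now contributes rows $(\omega_{g'}-\omega_g)\cdot(p\times n)+(t_{g'}-t_g)\cdot n\ge 0$ that depend on the contact vertex $p$ and so couple rotations to translations; the ribbon monotonicity no longer isolates a single scalar. My plan is therefore to feed the explicit coordinates $v_1,\dots,v_{10}$ and the three slanted normals, together with their contact points, into the full rows and argue, exactly in the spirit of the determinant computation $\det M_1,\det M_2$ in Lemma \ref{lemma:one_side}, that at a single block the vectors $\{n_i\}$ and $\{p\times n_i\}$ already span $\R^3$ in each slot, so that once $t_g=0$ is known the surviving inequalities leave no nontrivial $\omega_g$. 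I expect this to be the main obstacle: one must check that the RhomBlock's contact points and normals are in sufficiently general position for the $6\lvert I\rvert$ columns to be pinned down, and verify that the parallel-pair structure lets the opposite-wall rows combine so as to control the $p\times n$ terms rather than merely bound them, which is precisely where the special geometry of the block enters.
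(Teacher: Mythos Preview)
Your translational ribbon argument and the appeal to Proposition~\ref{proposition:kernel} for the kernel are essentially the paper's approach and are fine. The genuine gap is that you leave the rotational part undone, calling it ``the main obstacle''. In fact there is no obstacle: the same ribbon telescoping already works for the full six-dimensional motion, and this is precisely how the paper proceeds.

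The geometric fact you overlook is that for the RhomBlock the two parallel vertical walls are not merely parallel but are translates of one another \emph{along their common normal}: if one wall has contact vertex $p$ and outward normal $n$, the opposite wall has the corresponding vertex at $p-\lambda n$ and outward normal $-n$. Since $n\times n=0$, one has $(p-\lambda n)\times(-n)=-(p\times n)$, so the linear functional $(\omega,t)\mapsto (p\times n)\cdot\omega+n\cdot t$ reverses sign exactly when passing to the opposite wall. Hence along a ribbon the quantity $(p_k\times n)\cdot\omega_{g_k}+n\cdot t_{g_k}$ is monotone in $k$, and finite support forces every vertical-face inequality to be an equality for the full $(\omega_g,t_g)$, not just for $t_g$. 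Your worry that ``the $p\times n$ terms'' spoil the telescoping is unfounded for vertical walls precisely because the offset is along $n$.

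The paper then argues in two further steps that you do not anticipate: once all vertical-face rows are equalities, the three independent vertical normals force each $\gamma_g$ to be a pure vertical translation $(0,0,\tau_g)$; and then the tilted faces, which also come in parallel pairs, give a second chain argument ruling out any strict inequality there as well. Only after both chain arguments does one land in $\ker A$ and invoke Lemma~\ref{lemma:one_side}/Proposition~\ref{proposition:kernel}. Your plan to attack rotations by a separate rank computation on $\{n_i\}$ and $\{p\times n_i\}$ after first fixing $t_g=0$ is therefore both unnecessary and in the wrong order: you cannot legitimately set $\omega_g=0$ first, and once you use the $n\times n=0$ observation you never need a second local rank argument.
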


\begin{proof}
    In \cite{goertzen2024constructing} it is shown that we can associate two RhomBlock assemblies to a given lozenges tiling. 
In order to prove that this assembly is indeed an interlocking assembly, we consider its infinitesimal interlocking matrix $A$ and show that for any $\gamma$ with finite support, and $A\cdot \gamma \geq 0 $ it follows that $\gamma=0$. This is proven in two steps: first we show that for any such $\gamma$ there is no $i$ with $(A\cdot \gamma)_i >0$ and then we show that the kernel of the matrix $A$ is trivial. So assume that, there exists $i$ with $(A\cdot \gamma)_i >0$. If $i$ is associated to a vertex of a vertical face, we can construct an infinite chain as follows:  the entry $(A\cdot \gamma)_i >0$ corresponds to an inequality of the form $(-(p\times n)^\intercal,-n^\intercal,(p\times n)^\intercal,n^\intercal)\cdot (\gamma',\gamma)>0$ according to Definition \ref{def:infinitesimal_interlocking}. If $((p\times n)^\intercal,n^\intercal)\cdot \gamma>0$ it follows that $((p\times -n)^\intercal,-n^\intercal)\cdot \gamma=(((p-n)\times -n)^\intercal,-n^\intercal)\cdot \gamma<0$ and thus there must be $\gamma''$ with $(((p-n)\times n)^\intercal,n^\intercal)\cdot \gamma''>0$. Proceeding iteratively, we obtain an infinite chain of moving blocks, which is not possible as we assumed finite support and thus $(A\cdot \gamma)_i =0$ for any point associated to a vertical face. This implies that $\gamma$ consists only of upward translations. If $i$ then belongs to a tilted face, it follows that upward translation lead to another infinite chain. Hence, the case $A\gamma=0$ remains. This follows from the more general Lemma in Section \ref{sec:proof}. In total, we have that the RhomBlock assembly is indeed an interlocking assembly.
\end{proof}

\section{Conclusion and Outlook}

In this work, we first establish the mathematical foundations of interlocking assemblies. Next, we prove that a method, first introduced in \cite{wang_computational_2021}, can be used to verify the interlocking property. This is showcased for assemblies constructed in \cite{goertzen2024constructing} with a special emphasis on assemblies with RhomBlocks that arise from tessellations with lozenges.

In the literature, several other construction methods for candidates for interlocking assemblies are known, such as those involving convex bodies in \cite{EstDysArcPasBelKanPogodaevConvex}. As a next step into the general theory of interlocking assemblies, it might be interesting to apply the developed theory in this work to verify that these assemblies indeed satisfy the interlocking definition given in Definition \ref{def:interlocking} by means of Proposition \ref{proposition:infinitesimal}.

\section{Acknowledgements}
This work was funded by the Deutsche Forschungsgemeinschaft (DFG, German Research Foundation) – SFB/TRR 280. Project-ID: 417002380. 

\newpage
\bibliographystyle{plain}
\bibliography{cite} 

\begin{thebibliography}{10}

\bibitem{akleman_generalized_2020}
Ergun Akleman, Vinayak~R. Krishnamurthy, Chia-An Fu, Sai~Ganesh Subramanian, Matthew Ebert, Matthew Eng, Courtney Starrett, and Haard Panchal.
\newblock Generalized abeille tiles: {Topologically} interlocked space-filling shapes generated based on fabric symmetries.
\newblock {\em Computers \& Graphics}, 89:156--166, 2020.

\bibitem{GoertzenBridges}
Reymond Akpanya, Tom Goertzen, Sebastian Wiesenhuetter, Alice~C. Niemeyer, and J\"{o}rg Noennig.
\newblock {Topological Interlocking, Truchet Tiles and Self-Assemblies: A Construction-Kit for Civil Engineering Design}.
\newblock In Judy Holdener, Eve Torrence, Chamberlain Fong, and Katherine Seaton, editors, {\em Proceedings of Bridges 2023: Mathematics, Art, Music, Architecture, Culture}, pages 61--68, Phoenix, Arizona, 2023. Tessellations Publishing.

\bibitem{EstDysArcPasBelKanPogodaevConvex}
A.~Belov-Kanel, Arcady Dyskin, Yuri Estrin, E.~Pasternak, and I.~Ivanov-Pogodaev.
\newblock Interlocking of convex polyhedra: towards a geometric theory of fragmented solids.
\newblock {\em Moscow Mathematical Journal}, 10, 01 2009.

\bibitem{InterlockingPuzzles}
Rulin Chen, Ziqi Wang, Peng Song, and Bernd Bickel.
\newblock Computational design of high-level interlocking puzzles.
\newblock {\em ACM Trans. Graph.}, 41(4), jul 2022.

\bibitem{HarmonicAnalysisApplied}
G.S. Chirikjian and A.B. Kyatkin.
\newblock {\em Harmonic {Analysis} for {Engineers} and {Applied} {Scientists}: {Updated} and {Expanded} {Edition}}.
\newblock Dover {Books} on {Mathematics}. Dover Publications, 2016.

\bibitem{dyskin_new_2001}
A.~V. Dyskin, Y.~Estrin, A.~J. Kanel-Belov, and E.~Pasternak.
\newblock A new concept in design of materials and structures: assemblies of interlocked tetrahedron-shaped elements.
\newblock {\em Scripta Materialia}, 44(12):2689--2694, 2001.

\bibitem{a_v_dyskin_topological_2003}
A.~V. Dyskin, Y.~Estrin, A.~J. Kanel-Belov, and E.~Pasternak.
\newblock Topological interlocking of platonic solids: {A} way to new materials and structures.
\newblock {\em Philosophical Magazine Letters}, 83(3):197--203, 2003.

\bibitem{dyskin_principle_2005}
A.~V. Dyskin, Y.~Estrin, E.~Pasternak, H.~C. Khor, and A.~J. Kanel-Belov.
\newblock The principle of topological interlocking in extraterrestrial construction.
\newblock {\em Acta Astronautica}, 57(1):10--21, 2005.

\bibitem{EstrinDesignReview}
Yuri Estrin, Vinayak~R. Krishnamurthy, and Ergun Akleman.
\newblock Design of architectured materials based on topological and geometrical interlocking.
\newblock {\em Journal of Materials Research and Technology}, 15:1165--1178, 2021.

\bibitem{fallacara_topological_2019}
Giuseppe Fallacara, Maurizio Barberio, and Micaela Colella.
\newblock Topological {Interlocking} {Blocks} for {Architecture}: {From} {Flat} to {Curved} {Morphologies}.
\newblock In Yuri Estrin, Yves Bréchet, John Dunlop, and Peter Fratzl, editors, {\em Architectured {Materials} in {Nature} and {Engineering}: {Archimats}}, pages 423--445. Springer International Publishing, Cham, 2019.

\bibitem{glickman_g-block_1984}
Michael Glickman.
\newblock The {G}-block system of vertically interlocking paving.
\newblock In {\em Second {International} {Conference} on {Concrete} {Block} {Paving}}, pages 10--12, 1984.

\bibitem{goertzen2024constructing}
Tom {Goertzen}.
\newblock {Constructing Interlocking Assemblies with Crystallographic Symmetries}.
\newblock {\em arXiv e-prints}, May 2024.
\newblock arXiv:2405.15080.

\bibitem{GoertzenFIB}
Tom Goertzen, Alice Niemeyer, and Wilhelm Plesken.
\newblock Topological interlocking via symmetry.
\newblock In {\em Proc. of the 6th fib International Congress 2022}, page 1235 – 1244. Novus Press, Oslo, Norway, 2022.

\bibitem{GrunbaumPolytopes}
Branko Gr{\"u}nbaum.
\newblock {\em Convex polytopes}, volume 221 of {\em Graduate Texts in Mathematics}.
\newblock Springer-Verlag, New York, second edition, 2003.
\newblock Prepared and with a preface by Volker Kaibel, Victor Klee and G\"{u}nter M. Ziegler.

\bibitem{ITA2002}
IUCr.
\newblock {\em International Tables for Crystallography, Volume A: Space Group Symmetry}.
\newblock International Tables for Crystallography. Kluwer Academic Publishers, Dordrecht, Boston, London, 5. revised edition edition, 2002.

\bibitem{scipy}
Eric Jones, Travis Oliphant, Pearu Peterson, et~al.
\newblock {SciPy}: Open source scientific tools for {Python}, 2001--2024.

\bibitem{DifferentialGeometryKobayashi}
Shoshichi Kobayashi and Katsumi Nomizu.
\newblock {\em Foundations of differential geometry. {V}ol. {I}}.
\newblock Wiley Classics Library. John Wiley \& Sons, Inc., New York, 1996.
\newblock Reprint of the 1963 original, A Wiley-Interscience Publication.

\bibitem{lecci_design_2021}
Francesca Lecci, Cecilia Mazzoli, Cristiana Bartolomei, and Riccardo Gulli.
\newblock Design of {Flat} {Vaults} with {Topological} {Interlocking} {Solids}.
\newblock {\em Nexus Network Journal}, 23(3):607--627, September 2021.

\bibitem{piekarski_floor_2020}
Maciej Piekarski.
\newblock Floor {Slabs} {Made} from {Topologically} {Interlocking} {Prefabs} of {Small} {Size}.
\newblock {\em Buildings}, 10(4), 2020.

\bibitem{stuttgen_modular_2023}
Sascha Stüttgen, Reymond Akpanya, Birgit Beckmann, Rostislav Chudoba, Daniel Robertz, and Alice~C. Niemeyer.
\newblock Modular {Construction} of {Topological} {Interlocking} {Blocks}—{An} {Algebraic} {Approach} for {Resource}-{Efficient} {Carbon}-{Reinforced} {Concrete} {Structures}.
\newblock {\em Buildings}, 13(10), 2023.

\bibitem{wang_computational_2021}
Ziqi Wang.
\newblock {\em Computational {Analysis} and {Design} of {Structurally} {Stable} {Assemblies} with {Rigid} {Parts}}.
\newblock PhD thesis, EPFL, 2021.

\bibitem{wang_design_2019}
Ziqi Wang, Peng Song, Florin Isvoranu, and Mark Pauly.
\newblock Design and {Structural} {Optimization} of {Topological} {Interlocking} {Assemblies}.
\newblock {\em ACM Trans. Graph.}, 38(6), November 2019.
\newblock Place: New York, NY, USA Publisher: Association for Computing Machinery.

\bibitem{wilson_geometric_1992}
Randall~H. Wilson.
\newblock {\em On {Geometric} {Assembly} {Planning}}.
\newblock {PhD} {Thesis}, Stanford University, Stanford, CA, USA, 1992.

\end{thebibliography}

\end{document}